\title{Hitting time of edge disjoint Hamilton cycles in random subgraph processes on dense base graphs}
\author{
Yahav Alon
\thanks{School of Mathematical Sciences, Raymond and Beverly Sackler Faculty of Exact Sciences, Tel Aviv University,
Tel Aviv, 6997801, Israel. Email: yahavalo@mail.tau.ac.il.}
\and Michael Krivelevich
\thanks{School of Mathematical Sciences, Raymond and Beverly
Sackler Faculty of Exact Sciences, Tel Aviv University, Tel Aviv,
6997801, Israel. Email: krivelev@tauex.tau.ac.il. Partially supported by USA-Israel BSF grants 2014361 and 2018267, and by ISF grant 1261/17.}
}
\begin{document}
\maketitle
\newtheorem{thm}{Theorem}%[section]
\newtheorem{propos}{Proposition}
\newtheorem{defin}{Definition}
\newtheorem{lemma}{Lemma}[section]
\newtheorem{corol}{Corollary}[section]
\newtheorem{thmtool}{Theorem}[section]
\newtheorem{corollary}[thmtool]{Corollary}
\newtheorem{lem}[thmtool]{Lemma}
\newtheorem{defi}[thmtool]{Definition}
\newtheorem{prop}[thmtool]{Proposition}
\newtheorem{clm}[thmtool]{Claim}
\newtheorem{conjecture}{Conjecture}
\newtheorem{problem}{Problem}
\newcommand{\Proof}{\noindent{\bf Proof.}\ \ }
\newcommand{\Remarks}{\noindent{\bf Remarks:}\ \ }
\newcommand{\Remark}{\noindent{\bf Remark:}\ \ }

\begin{abstract}
Consider the random subgraph process on a base graph $G$ on $n$ vertices: a sequence $\lbrace G_t \rbrace _{t=0} ^{|E(G)|}$ of random subgraphs of $G$ obtained by choosing an ordering of the edges of $G$ uniformly at random, and by sequentially adding edges to $G_0$, the empty graph on the vertex set of $G$, according to the chosen ordering. We show that if $G$ has one of the following properties:
\begin{enumerate}
	\item There is a positive constant $\varepsilon > 0$ such that $\delta (G) \geq \left( \frac{1}{2} + \varepsilon \right) n$;
	\item There are some constants $\alpha, \beta >0$ such that every two disjoint subsets $U,W$ of size at least $\alpha n$ have at least $\beta |U||W|$ edges between them, and the minimum degree of $G$ is at least $(2\alpha + \beta )\cdot n$;
\end{enumerate}
or:
\begin{enumerate}[resume]
	\item $G$ is an $(n,d,\lambda )$--graph, with $d\geq \frac{C\cdot n\cdot \log \log n}{\log n}$ and $\lambda \leq \frac{c\cdot d^2}{n}$ for some absolute constants $c,C>0$.
\end{enumerate}
then for a positive integer constant $k$ with high probability the hitting time of the property of containing $k$ edge disjoint Hamilton cycles is equal to the hitting time of having minimum degree at least $2k$. These results extend prior results by Johansson and by Frieze and Krivelevich, and answer a question posed by Frieze.
\end{abstract}

\section{Introduction} \label{sec-intro} 
Consider a \textit{random graph process}, defined as a random sequence of nested graphs on $n$ vertices $\tilde{G}(\sigma ) = \lbrace G_t(\sigma ) \rbrace _{t=0} ^{\binom{n}{2}}$, where $\sigma$ is an ordering of the edges of $K_n$ chosen randomly and uniformly from among all $\binom{n}{2}!$ such orderings. Set $G_0 (\sigma )$ to be a graph with vertex set $[n]$ and no edges, and for all $1\leq t \leq \binom{n}{2}$, $G_t (\sigma )$ is obtained by adding the $t$--th edge according to the order $\sigma$ to  $G_{t-1} (\sigma )$. The \textit{hitting time} of a monotone increasing, non--empty graph property $\mathcal{P}$, which we will denote as $\tau _{\mathcal{P}}( \tilde{G} (\sigma ))$, is a random variable equal to the index $t$ for which $G_t(\sigma ) \in \mathcal{P}$ but $G_{t-1}(\sigma ) \notin \mathcal{P}$.

Denote by $\mathcal{H}$ the property of Hamiltonicity, by $\mathcal{D}_d$ the property of having minimum degree at least $d$, and by $\tau _d(\tilde{G}(\sigma ))$ its hitting time in a random graph process $\tilde{G}(\sigma )$. A classical and very significant result by Ajtai, Koml{\'o}s and Szemer{\'e}di in \cite{AKS85}, and independently by Bollob{\'a}s in \cite{B84}, states that with high probability $\tau _2(\tilde{G}(\sigma )) = \tau _{\mathcal{H}}(\tilde{G}(\sigma ))$.

This result was generalized by Bollob{\'a}s and Frieze \cite{BF85}, in the following manner: let $\mathcal{A}_{k}$ be be the graph property of containing $\lfloor k/2 \rfloor$ edge disjoint Hamilton cycles, and a disjoint perfect matching in the case $k$ is odd. Then for $k$ constant, with high probability $\tau _k(\tilde{G}(\sigma )) = \tau _{\mathcal{A}_k}(\tilde{G}(\sigma ))$.

Some further results regarding the appearence of a number of edge disjoint Hamilton cycle were obtained over the years. Briggs, Frieze, Krivelevich, Loh and Sudakov \cite{BFKLS} proved an online version of the hitting time result: there is an algorithm that assigns a colour from the set $\{1,2,...,k\}$, online, to each edge added in the graph process, such that with high probability $G_{\tau _{2k}}$ contains $k$ Hamilton cycles $C_1, C_2, ... , C_k$, where the edges of cycle $C_j$ all have color $j$. Frieze and Krivelevich conjectured in \cite{FK} that for $0\leq p(n) \leq 1$ with high probability the random graph $G(n,p)$ contains $\lfloor \delta (G) /2 \rfloor$ edge disjoint Hamilton cycles. In the same paper, the authors prove this conjecture to be true for $p(n) = (1+o(1))\log n /n$. The conjecture was later proven to be true for all values of $p(n)$ over several subsequent papers (see \cite{FK05,BKS,KW,KKO,KKO2}).

\subsection*{The random subgraph process}
In this paper we aim to further generalize the result of Bollob{\'a}s and Frieze, regarding the hitting time of the property $\mathcal{A}_{k}$, by considering the \emph{random subgraph process} model.

\begin{defi}
Let $G$ be a graph on $n$ vertices and $m$ edges. For an ordering $\sigma$ of the set $E(G)$, the subgraph process $\lbrace G_t(\sigma ) \rbrace _{t=0} ^{m}$ on $G$ is a sequence of nested subgraphs of $G$ obtained by setting $G_0$ to be the empty graph on $n$ vertices, and $G_t$ to be the result of adding the $t$'th edge (according to $\sigma$) to $G_{t-1}$, for $t>0$.

\emph{The random subgraph process} $\lbrace G_t \rbrace _{t=0} ^{m}$ \emph{on base graph} $G$ is the (random) graph process on $G$ obtained by choosing an edge ordering $\sigma$ at random, uniformly from among all $m!$ possible orderings.
\end{defi}

The \emph{hitting time} of a monotone increasing graph property $\mathcal{P}$ such that $G\in \mathcal{P}$ in a random subgraph process, defined exactly the same as in the random graph process model, can now be considered for different properties and base graphs. In his recent paper \cite{JO}, Johansson proved the following result regarding the hitting time of the properties $\mathcal{H}$ and $\mathcal{D}_2$, under an assumption on the minimum degree of the base graph:\\

\noindent \textbf{Theorem.} (Johansson, \cite{JO}): \emph{Let $\varepsilon>0$, let $G$ be a graph such that $\delta (G) \geq \left( \frac{1}{2} + \varepsilon \right) n$ and let $\lbrace G_t \rbrace $ be a random subgraph process with base graph $G$. Then with high probability $\tau _2 \left( \lbrace G_t \rbrace \right) = \tau _{\mathcal{H}} \left( \lbrace G_t \rbrace \right)$.}\\

From this result, the author further derived a threshold probability for Hamiltonicity in the random subgraph model $G_p$, for base graphs $G$ with minimum degree at least $\left( \frac{1}{2} + \varepsilon \right) n$.\\

In this paper we present a generalization of this result in two directions:
\begin{enumerate}
	\item We consider the hitting times property $\mathcal{A}_{2k}$ for every constant positive integer $k$;
	\item We extend the result to a larger class of base graphs.
\end{enumerate}
This generalizes Johansson's result, as well as the result by Bollob{\'a}s and Frieze, and answers a question by Frieze (see \cite{FRI}, Problem 20).

\subsection*{Dirac graphs and super--Dirac graphs}
We say that a graph $G$ on $n$ vertices is a \emph{Dirac graph} if its minimum degree is at least $\frac{1}{2}n$. The classical Dirac's theorem \cite{DIR} states that if $G$ is a Dirac graph on $n\geq 3$ vertices then it is Hamiltonian. For some $\varepsilon > 0$ we say that a graph is $\varepsilon$--super--Dirac if its minimum degree is at least $\left( \frac{1}{2} + \varepsilon \right) n$. From Dirac's theorem it is easily derived that an $\varepsilon$--super--Dirac graph contains a set of edge disjoint Hamilton cycles of size linear in $n$.

The study of Dirac and super--Dirac graphs has yielded some interesting results with regards to Hamiltonicity in random subgraphs. Krivelevich, Lee and Sudakov \cite{KLS} showed that there is some constant $C$ such that if $G$ is a Dirac graph and $p\geq C\log n/n$ then the random subgraph $G_p$ is with high probability Hamiltonian. Frieze and Johansson \cite{FJ} showed that if $G$ is $\varepsilon$--super--Dirac and $k$ is large enough as a function of $\varepsilon$, then the random subgraph $G(k\mbox{-out})$ is with high probability Hamiltonian.

Johansson's result states that the hitting time of Hamiltonicity in a random subgraph process is with high probability equal to the hitting time of having minimum degree at least 2, when the base graph is $\varepsilon$--super--Dirac. Our first result in this paper, a proof of which is presented in Section \ref{sec-superdirac}, is an extension of Johansson's result on $\varepsilon$--super--Dirac graphs to the hitting time of the property $\mathcal{A}_{2k}$, for a constant $k$:

\begin{thm}  \label{thmDirac}
Let $\varepsilon > 0$, $k\in \mathbb{N}$, let $G$ be an $\varepsilon$--super--Dirac graph and let $\lbrace G_t \rbrace $ be a random graph process with base graph $G$. Then with high probability $\tau _{2k}\left( \lbrace G_t \rbrace \right) = \tau _{\mathcal{A}_{2k}}\left( \lbrace G_t \rbrace \right)$.
\end{thm}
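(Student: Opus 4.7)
The lower bound $\tau_{\mathcal{A}_{2k}} \geq \tau_{2k}$ is deterministic and immediate, since every graph containing $k$ edge-disjoint Hamilton cycles has minimum degree at least $2k$. The entire content of the theorem is therefore the matching upper bound: writing $T := \tau_{2k}$, I aim to prove that w.h.p.\ $G_T$ already contains $k$ edge-disjoint Hamilton cycles.

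The first step is to establish quantitative structural properties of $G_T$. Standard concentration arguments for the random subgraph process, in the spirit of the tools developed in Johansson's paper, should yield w.h.p.\ that $T = \Theta(n\log n)$; that the set $S$ of vertices whose degree in $G_T$ is below a large constant $K = K(\varepsilon,k)$ has size $n^{o(1)}$ and is scattered (its elements pairwise at graph distance at least $3$); and that $G_T$ inherits strong edge-distribution and expansion properties from the $\varepsilon$-super-Dirac structure of $G$. In particular, the graph $G_T \setminus S$ should behave like a near-regular expander of logarithmic average degree.

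With these ingredients, I would follow the scheme of Bollob\'as and Frieze. Take a collection $\mathcal{C} = \{H_1,\dots,H_r\}$ of edge-disjoint Hamilton cycles in $G_T$ with $r$ maximum, breaking ties by additionally maximising a secondary parameter such as the number of P\'osa-type boosters available in the residual graph $R := G_T \setminus \bigcup_{i} E(H_i)$. Assume toward contradiction that $r \leq k-1$. Then $\delta(R) \geq 2(k-r) \geq 2$, and since only $O(n)$ edges are removed from $G_T$ (whose edge count is $\Theta(n\log n)$), the residual $R$ retains essentially all the expansion and pseudorandomness of $G_T$ away from $S$. Applying the P\'osa rotation--extension technique to $R$ should produce either a Hamilton cycle in $R$, contradicting the maximality of $r$, or else a booster edge whose swap into one of the $H_i$ strictly improves the secondary parameter of $\mathcal{C}$, again contradicting optimality.

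The main technical obstacle will be verifying that $R$ retains enough expansion to drive the rotation--extension argument despite the presence of $S$: the vertices of $S$ that started with degree exactly $2k$ in $G_T$ are left with only two incident edges in $R$, so both Hamilton cycle edges at such a vertex are frozen, and any rotation touching $S$ has its endpoints essentially dictated. One must therefore show that rotations can be efficiently rerouted around $S$, using the rich expansion of $G_T \setminus S$ to produce exponentially many endpoint choices even when $S$ behaves rigidly. This is precisely where the $\varepsilon$-super-Dirac hypothesis on $G$ is used in full force, supplying the global density required to absorb the local constraints created by the small-degree vertices and to close the final Hamilton cycle.
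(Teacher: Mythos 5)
Your reduction is the right one and coincides with the paper's: the inequality $\tau_{\mathcal{A}_{2k}}\ge\tau_{2k}$ is trivial, and the content is that after deleting any $F\subseteq G_{\tau_{2k}}$ with $\Delta(F)\le 2k-2$ (the union of the cycles already found) the remainder is still Hamiltonian; the structural properties you list for $G_{\tau_{2k}}$ are essentially the paper's (P1)--(P6). But the step ``apply rotation--extension to $R$ to produce a Hamilton cycle or a booster'' is exactly where the proof lives, and as stated it fails. Since the base graph $G$ is not complete, the $\Omega(n^2)$ endpoint pairs $(s,t)$ produced by P\'osa rotations need not be edges of $G$ at all: an $\varepsilon$--super--Dirac $G$ can miss every pair in $S\times T_s$, so there may be no single boosters available in $E(G)$ whatsoever. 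The paper's fix is Montgomery's notion of a \emph{booster pair}: for each rotation endpoint pair $(s,t)$, the $\ge(\tfrac12+\tfrac{\varepsilon}{3})n$ $G$--neighbours of $s$ and of $t$ on the path yield, by pigeonhole, $\Omega(\varepsilon n)$ two--edge chords $\{(s,u),(p(u),t)\}$, giving $\Omega(\varepsilon n^3)$ booster pairs inside $E(G\setminus F)$ with bounded overlap, together with a bespoke adaptive--sampling lemma (Lemmas \ref{lemmaUse} and \ref{lemmaSpans}) showing that with probability $1-\exp(-\Omega(\varepsilon n^2p))$ at least one pair survives into $G_p$. Nothing in your proposal supplies booster candidates that are guaranteed to be edges of $G$.

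The second gap is that at the hitting time there is no fresh randomness: you cannot ask whether ``$R$ contains a booster for $R$,'' because $R$ carries essentially all of the conditioning. The paper's route is to first extract from $G_{\tau_{2k}}\setminus F$ a \emph{connected $(\tfrac n8,2)$--expander $\Gamma_1$ with only $O(\varepsilon^2 n\log n)$ edges} (Lemma \ref{lemma1}, built by letting each non--SMALL vertex retain $5\varepsilon^2\log n$ random incident edges, with the SMALL vertices handled via their pairwise distance $>4$), and then to union--bound the ``no booster pair'' event over \emph{all} sparse expanders $\Gamma$ and all $F$; the failure probability $\exp(-\Theta(\varepsilon n\log n))$ beats the entropy $\exp\bigl(O(\varepsilon^2\log(1/\varepsilon)\, n\log n)\bigr)$ of choosing $\Gamma$ only because $\Gamma$ is this sparse and $\varepsilon$ is small. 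Your maximality--plus--secondary--parameter exchange argument does not substitute for this union bound, and the sparsification step is entirely absent from the proposal. You correctly flag the rigidity caused by low--degree vertices, but identifying the obstacle is not the same as absorbing it; these two missing ingredients are the core of the proof rather than routine verifications.
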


\subsection*{Pseudo--random graphs}
In Section \ref{sec-abdense} and Section \ref{sec-ndl} we extend this hitting time result to additional families of graphs. In both cases, the family of graphs for which we show that the hitting times are with high probability equal is in some way connected to the notion of pseudo--random graphs. Informally, a pseudo--random graph is a graph that has some of the characteristics one expects to observe in a random graph (for some general information on pseudo--random graphs, the reader may refer to \cite{KSSURV}). Thomason suggested the following definition:

\begin{defi}[Thomason, 1987 \cite{THOM1,THOM2}]\label{def-prg}
A graph $G$ on $n$ vertices is said to be $(p,\alpha )$--\emph{jumbled}, with $0 < p < 1 \leq \alpha$, if for every subset $U\subseteq V(G)$ the inequality $$ \left| e(U) - p\cdot \binom{|U|}{2} \right| \leq \alpha |U| $$ holds.
\end{defi}

In Section \ref{sec-abdense} we present a result on a similar family of graphs. We introduce a definition of $(\alpha ,\beta )$\emph{--dense} graphs, which omits the upper bound requirement on the number of edges spanned by two vertex subsets, while adding a minimum degree requirement:

\begin{defi}\label{def-abdense}
Let $0 < \alpha , \beta \leq 1$. We say that a graph $G$ on $n$ vertices is $(\alpha ,\beta )$\emph{--dense} if
\begin{enumerate}
	\item $\forall A,B\subseteq V(G)$ disjoint subsets such that $|A|,|B| \geq \alpha n$ : $e_G(A,B) \geq \beta \cdot |A|\cdot |B|$;
	\item $\delta (G) \geq \left( 2\alpha + \beta \right) n$.
\end{enumerate}
\end{defi}

Thomason \cite{THOM1} showed that if $G$ is a $(p,\alpha )$--jumbled graph, and $\delta (G) = \Omega \left( \alpha / p \right)$, then $G$ is Hamiltonian. This property extends to the similarly defined $(\alpha ,\beta )$--dense graphs: by a simple rotation and extension argument, it is easy to derive that for all constant values of $0 < \alpha , \beta \leq 1$ if $G$ is $(\alpha ,\beta )$--dense then it is Hamiltonian, and in fact contains a set of edge disjoint Hamilton cycles of size linear in $n$.

In Section \ref{sec-abdense} we prove the following theorem:

\begin{thm} \label{thmJumbled}
Let $0 < \alpha , \beta \leq 1$, $k\in \mathbb{N}$, let $G$ be an $(\alpha ,\beta )$--dense graph on $n$ vertices and let $\lbrace G_t \rbrace $ be a random graph process with base graph $G$. Then with high probability $\tau _{2k}\left( \lbrace G_t \rbrace \right) = \tau _{\mathcal{A}_{2k}}\left( \lbrace G_t \rbrace \right)$.
\end{thm}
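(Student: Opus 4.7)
The plan is to mirror the proof of Theorem \ref{thmDirac}, replacing uses of the super-Dirac hypothesis by structural consequences of $(\alpha,\beta)$-density. The easy direction $\tau_{2k}(\lbrace G_t\rbrace ) \leq \tau_{\mathcal{A}_{2k}}(\lbrace G_t\rbrace )$ is immediate, since $k$ edge-disjoint Hamilton cycles force minimum degree at least $2k$. All work lies in the reverse inclusion with high probability.

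First, I would prove a \emph{pseudo-random transfer lemma}: by a Chernoff/martingale calculation on the random ordering of $E(G)$, at the hitting time $t^{*}=\tau_{2k}(\lbrace G_t\rbrace )$ (which is of order $|E(G)|\log n / n$ with high probability) the random subgraph $G_{t^{*}}$ inherits an analogue of the $(\alpha,\beta)$-density condition: every two disjoint vertex sets $A,B$ of size at least $\alpha' n$ span $\Theta(|A||B|\log n / n)$ edges in $G_{t^{*}}$, and every set $S$ of size at most $\alpha' n$ has external neighbourhood at least $|S|$ there. Here $\alpha'>\alpha$ is a suitable constant, and the second property rests on the minimum-degree hypothesis of Definition \ref{def-abdense}.

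Next I would apply a two-round exposure: write $t^{*} = t_1 + t_2$ with $t_2 = \eta n$ for a small $\eta>0$, and treat the last $t_2$ edges as a random reservoir $R$. The Hamilton cycles are built inductively. Suppose $0\leq j < k$ edge-disjoint Hamilton cycles $C_1,\ldots,C_j$ have already been produced inside $G_{t_1}\cup R$; set $G' := G_{t_1}\setminus \bigcup_{i=1}^{j}C_i$ and observe that the expansion properties above survive the removal of $O(n)$ edges, so $G'$ is still an expander. Pósa rotation-extension inside $G'$ produces a Hamilton path together with a polynomial family of candidate endpoint pairs, and a random booster drawn from $R$ closes a Hamilton cycle $C_{j+1}$ that is edge-disjoint from $C_1,\ldots,C_j$. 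At each vertex whose $G_{t^{*}}$-degree is only slightly above $2k$, the constructions must be coordinated so that exactly two of its incident edges are consumed by each $C_i$.

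The main obstacle will be the treatment of low-degree vertices. In an $\varepsilon$-super-Dirac graph, Pósa rotations enjoy a comfortable $(1/2+\varepsilon)n$ slack, and the minimum-degree vertices of $G_{t^{*}}$ automatically sit inside large expanding neighbourhoods. Under the $(\alpha,\beta)$-density hypothesis the minimum degree of $G$ may be well below $n/2$, so the expansion driving the rotation-extension step has to be extracted from the bipartite density property for sets of size $\geq \alpha n$, combined with the global minimum-degree bound $(2\alpha+\beta)n$ for smaller sets. Verifying that the \emph{restricted} rotations---confined to the edges of $G'$ and forced to respect the usage pattern of low-degree vertices across $C_1,\ldots,C_{j+1}$---still generate a polynomial pool of endpoint pairs that a booster from $R$ can close, is the technically demanding core of the argument.
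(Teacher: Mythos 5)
Your high-level outline (transfer pseudo-randomness to $G_{\tau_{2k}}$, then P\'osa rotations plus boosters, then induct over the $k$ cycles) points in the right direction, but there are two genuine gaps. First, the closing device. You repeatedly speak of ``a random booster'' from the reservoir closing a cycle, i.e.\ of single non-edges $\{s,t\}$ from the $\Omega(n^2)$ endpoint pairs produced by rotations. In a random \emph{subgraph} process the only edges that can ever arrive are edges of $G$, and since $\delta(G)$ may be as low as $(2\alpha+\beta)n \ll n/2$, the pool $S\times T_s$ of rotation endpoints need not contain \emph{any} edge of $G$; the $(\alpha,\beta)$-density condition only controls pairs of sets of size at least $\alpha n$ and does not rescue this. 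The paper's proof instead works with Montgomery's \emph{booster pairs}: it builds a sparse skeleton $\Gamma_0$ that, besides being a connected $(\alpha n/3,2)$-expander, contains a linear-sized matching between every two disjoint sets of size $\alpha n$ (Lemma \ref{lemma3}), and uses such a matching between the predecessors of $N_G(s)\cap P$ and the successors of $N_G(t)\cap P$ to manufacture $\Omega(\beta n)$ pairs of $G$-edges whose joint addition boosts the path (Lemma \ref{lemmaBPs4}). This matching property, and the booster-pair mechanism it feeds, is exactly the ``technically demanding core'' you flag but do not supply; without it the argument does not close.

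Second, the two-round exposure at a hitting time. Splitting at $t_1=t^{*}-\eta n$ and treating the last $\eta n$ edges as a fresh reservoir conflicts with the exact hitting-time statement: $\tau_{2k}$ is a stopping time, $G_{t_1}$ will contain vertices of degree below $2k$ (breaking the expansion you need for rotations after deleting $C_1,\dots,C_j$), and at $\tau_{2k}$ there are vertices of degree exactly $2k$ all of whose incident edges must be consumed, two per cycle --- your remark that the constructions ``must be coordinated'' is precisely the unresolved difficulty, compounded by the fact that the reservoir's randomness is conditioned on the adaptively chosen $C_1,\dots,C_j$. The paper avoids both problems by proving a resilience-type statement: with high probability, for \emph{every} subgraph $F\subseteq G_{\tau_{2k}}$ with $\Delta(F)\le 2k-2$, the graph $G_{\tau_{2k}}\setminus F$ contains the sparse skeleton and a booster pair for every candidate sparse expander (Lemmas \ref{lemma3} and \ref{lemma4}, with a union bound over all $F$ and all sparse $\Gamma$). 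Then whatever cycles were found so far, their union is such an $F$, and no coordination or fresh randomness is needed. A further point you elide: localizing $\tau_{2k}$ only to order $|E(G)|\log n/n$ is too crude here because the degrees $d_G(v)$ are non-uniform; the paper pins the hitting time in a window of width $O(n\sqrt{\log n})$ around $N p^{*}$, where $p^{*}$ solves $\sum_v (1-x)^{d_G(v)}=1$, via a second-moment argument, and this finer window is needed to control the set of low-degree vertices at the hitting time.
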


A family of pseudo--random graphs of particular interest is given by the following definition:

\begin{defi}
Let $\lambda >0$ and let $d$ be a positive integer. A graph $G$ on $n$ vertices is called an $(n,d,\lambda )$--graph if $G$ is a $d$-regular graph with the second largest eigenvalue of its adjacency matrix in absolute value equal to $\lambda$.
\end{defi}

The following lemma due to Alon and Chung provides a connection between this definition, based on graph eigenvalues, and Thomason's definition of a pseudo--random/jumbled graph:

\begin{lemma}[Expander mixing lemma \cite{MIX}] \label{lemmaMixing}
Let $G$ be an $(n,d,\lambda )$--graph. Then for every pair of disjoint vertex subsets $U,W\subseteq V(G)$ it holds that $$\left|e_G(U,W) - \frac{d}{n}|U|\cdot |W| \right| \leq \lambda \cdot \sqrt{|U|\cdot |W|} .$$
\end{lemma}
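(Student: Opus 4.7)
The plan is to prove the expander mixing lemma via a standard spectral argument. Let $A$ denote the adjacency matrix of $G$, and write its eigenvalues as $d = \lambda_1 \geq \lambda_2 \geq \dots \geq \lambda_n$, with an associated orthonormal eigenbasis $v_1, \dots, v_n$. Since $G$ is $d$--regular, the all--ones vector $\mathbf{1}$ is an eigenvector with eigenvalue $d$, so we may take $v_1 = \mathbf{1}/\sqrt{n}$. By the hypothesis on $\lambda$, we have $|\lambda_i| \leq \lambda$ for every $i \geq 2$.

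Next, I would observe that for disjoint $U, W \subseteq V(G)$, the number of edges between them equals $e_G(U,W) = \mathbf{1}_U^T A \mathbf{1}_W$, where $\mathbf{1}_U, \mathbf{1}_W$ are the indicator vectors. Decompose these along the eigenbasis as $\mathbf{1}_U = \sum_i a_i v_i$ and $\mathbf{1}_W = \sum_i b_i v_i$. The coefficients along $v_1$ are $a_1 = |U|/\sqrt{n}$ and $b_1 = |W|/\sqrt{n}$, while $\sum_i a_i^2 = |U|$ and $\sum_i b_i^2 = |W|$ by Parseval.

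Expanding $\mathbf{1}_U^T A \mathbf{1}_W = \sum_{i} \lambda_i a_i b_i$, the $i = 1$ term contributes exactly $d \cdot (|U|/\sqrt{n})(|W|/\sqrt{n}) = \frac{d}{n}|U||W|$, which is the main term. The remaining sum is bounded, using $|\lambda_i| \leq \lambda$ and Cauchy--Schwarz, by
\[
\left| \sum_{i \geq 2} \lambda_i a_i b_i \right| \leq \lambda \sum_{i\geq 2} |a_i b_i| \leq \lambda \sqrt{\sum_{i\geq 2} a_i^2} \sqrt{\sum_{i\geq 2} b_i^2} \leq \lambda \sqrt{|U|\cdot|W|}.
\]
Combining these two observations yields the desired inequality.

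The argument is essentially routine and I do not anticipate a genuine obstacle; the only subtle point is making sure that the term $\mathbf{1}_U^T A \mathbf{1}_W$ counts edges between $U$ and $W$ exactly once (which relies on $U$ and $W$ being disjoint, otherwise edges inside $U \cap W$ would be counted twice), and verifying that the spectral bound $|\lambda_i| \leq \lambda$ can be pulled out uniformly from the sum over $i \geq 2$, which is immediate from the assumption that $\lambda$ is the second largest eigenvalue \emph{in absolute value}.
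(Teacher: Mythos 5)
Your proof is correct and is the standard spectral argument for the expander mixing lemma: decompose the indicator vectors in the orthonormal eigenbasis, isolate the $v_1 = \mathbf{1}/\sqrt{n}$ contribution as the main term $\frac{d}{n}|U||W|$, and bound the remainder by Cauchy--Schwarz using $|\lambda_i|\leq\lambda$ for $i\geq 2$. The paper does not prove this lemma but cites Alon--Chung, and your argument is precisely the canonical proof of that cited result, including the correct observation that disjointness of $U$ and $W$ is what makes $\mathbf{1}_U^T A \mathbf{1}_W$ count $e_G(U,W)$ exactly.
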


Frieze and Krivelevich \cite{FK05} showed that if $d=\Theta (n)$ and $\lambda = o\left( d \right)$ then an $(n,d,\lambda )$--graph contains a set of edge disjoint Hamilton cycles of size $(1-o(1))\frac{d}{2}$. The same authors also showed \cite{FK2} that if $G$ is an $(n,d,\lambda )$--graph with $\lambda = o\left( \frac{d^{5/2}}{(n\log n)^{3/2}} \right)$ then the hitting time of Hamiltonicity in a random subgraph process on $G$ is with high probability equal to that of having minimum degree at least 2.

In Section \ref{sec-ndl} we prove the following theorem:

\begin{thm} \label{thmNDL}
Let $C=10^8$,$c=1/400$, $d=d(n)\geq C\cdot \frac{n\cdot \log \log n}{\log n}$, $\lambda = \lambda (n) \leq \frac{c\cdot d^2}{n}$, $k\in \mathbb{N}$, let $G$ be an $(n,d,\lambda )$--graph, and let $\lbrace G_t \rbrace $ be a random graph process with base graph $G$. Then with high probability $\tau _{2k}\left( \lbrace G_t \rbrace \right) = \tau _{\mathcal{A}_{2k}}\left( \lbrace G_t \rbrace \right)$.
\end{thm}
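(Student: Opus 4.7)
The direction $\tau_{2k}(\{G_t\}) \leq \tau_{\mathcal{A}_{2k}}(\{G_t\})$ is immediate since $k$ edge-disjoint Hamilton cycles force minimum degree at least $2k$. The content of the theorem is the reverse: at time $t^* := \tau_{2k}(\{G_t\})$ the subgraph $G_{t^*}$ already contains $k$ edge-disjoint Hamilton cycles w.h.p. My plan is to adapt the sprinkling/reservoir framework underlying Theorems \ref{thmDirac} and \ref{thmJumbled}, replacing the density hypothesis with the spectral pseudo-randomness provided by Lemma \ref{lemmaMixing}.

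As a first step I would transfer pseudo-randomness from $G$ to $G_t$ for $t$ in a neighborhood of $t^*$. Coupon-collector-type estimates give $t^* = \Theta(n\log n)$ and $p := t^*/e(G) = \Theta(\log n / d)$, so $pd = \Theta(\log n)$ concentrates degrees in $G_{t^*}$ outside a small "bad" set $S$ of vertices with $\deg_{G_{t^*}} < 100k$. A Chernoff bound on top of Lemma \ref{lemmaMixing} then yields $|S| \leq n^{1-\gamma}$ for some $\gamma > 0$, that $S$ is well-spread in $G$, and that for disjoint $U,W$ of size $\Omega(n/\log n)$ one has $e_{G_t}(U,W) = (1+o(1))\,p\,\frac{d}{n}|U||W|$. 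The hypothesis $\lambda \leq cd^2/n$ is calibrated precisely so that the additive error $\lambda\sqrt{|U||W|}$ in the mixing lemma is a vanishing fraction of the main term even after subsampling at rate $p$, and $d\geq Cn\log\log n /\log n$ ensures that $pd$ is large enough to drive all of the above concentrations.

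I would then execute a two-phase exposure: reveal $G_{t_1}$ with $t_1 := t^* - \lfloor n^{1-\gamma/2} \rfloor$, followed by the final burst of $\Theta(n^{1-\gamma/2})$ fresh edges. To $G_{t_1}$, which by the previous step inherits strong expansion and pseudo-randomness, I would apply the Frieze--Krivelevich Hamilton-packing result for $(n,d,\lambda)$-graphs from \cite{FK05}, suitably adapted to this sparser regime, to extract $k-1$ edge-disjoint Hamilton cycles of $G_{t_1}$ while reserving all $G_{t^*}$-edges incident to $S$ for the $k$-th cycle (possible since $|S|$ is tiny and the pseudo-random bulk has ample room to avoid them). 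The last Hamilton cycle is produced via a P\'osa rotation-extension argument on the residual graph: the forced structure at vertices of $S$ is built into the initial path, rotations and extensions are performed using the pseudo-random bulk, and the $\Theta(n^{1-\gamma/2})$ fresh edges revealed between $t_1$ and $t^*$ serve as the boosters that close the path into a Hamilton cycle.

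The principal obstacle is the sparsity: after subsampling at rate $p \approx \log n/d$, removing $k-1$ Hamilton cycles, and isolating the set $S$, the residual graph is only barely a strong enough expander for P\'osa rotation to yield a linear-sized set of endpoint rotations, so the quantitative form of Lemma \ref{lemmaMixing} has to be pushed carefully. Guaranteeing that the booster set, once realized inside $G$, has density $\Omega(1/n)$ so that $\Theta(n^{1-\gamma/2})$ fresh random edges contain a booster w.h.p., again reduces to the mixing lemma. The explicit constants $C=10^8$ and $c=1/400$ in the statement arise from a quantitative tracking of parameters through the four steps: (i) subsampling from $G$, (ii) removing $k-1$ Hamilton cycles from the pseudo-random core, (iii) isolating $S$, and (iv) executing rotation-extension with the $\Theta(n^{1-\gamma/2})$ boosters.
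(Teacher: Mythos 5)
There is a genuine gap, and it sits at the heart of your plan. You propose to extract $k-1$ edge-disjoint Hamilton cycles from $G_{t_1}$ ``while reserving all $G_{t^*}$-edges incident to $S$ for the $k$-th cycle.'' This is impossible: every Hamilton cycle must visit every vertex of $S$, so each of the first $k-1$ cycles must itself consume two edges at each low-degree vertex. At a vertex whose degree at time $\tau_{2k}$ is exactly $2k$ (and such a vertex exists by definition of the hitting time), the $k$ cycles must perfectly partition its incident edge set; nothing can be ``reserved'' and nothing can be ``avoided.'' Consequently the first $k-1$ cycles cannot be produced by a black-box packing theorem applied to the pseudo-random bulk --- they must be threaded through $S$ with exact bookkeeping of which edges remain for later cycles. (A secondary but real issue: \cite{FK05} packs Hamilton cycles in regular $(n,d,\lambda)$-graphs with $d=\Theta(n)$, whereas $G_{t_1}$ is an irregular graph of typical degree $\Theta(\log n)$ containing vertices of degree $O(k)$; the ``suitable adaptation'' is essentially the whole difficulty of the theorem.) A further quantitative problem is your sprinkling budget: closing a longest path into a Hamilton cycle by rotation--extension may require up to $n$ successive boosters, each advancing the longest path by one, and $\Theta(n^{1-\gamma/2})=o(n)$ fresh edges cannot supply them; there is also an unaddressed conditioning issue in sprinkling relative to the random stopping time $\tau_{2k}$.

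The paper's route avoids both obstacles. It proves the single resilience-type statement that w.h.p.\ \emph{for every} subgraph $F\subseteq G_{\tau_{2k}}$ with $\Delta(F)\le 2k-2$, the graph $G_{\tau_{2k}}\setminus F$ is Hamiltonian; the $k$ cycles are then peeled off one at a time with $F$ the union of those already found. The low-degree vertices are handled automatically because the sparse connected expander $\Gamma_0\subseteq G_{\tau_{2k}}\setminus F$ of Lemma \ref{lemma5} is built to contain \emph{all} surviving edges at $SMALL$ vertices (whose pairwise distance is large by property (R3)), so whatever two edges remain at a degree-$2k$ vertex after deleting $F$ are available for the next cycle. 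Hamiltonicity of $G_{\tau_{2k}}\setminus F$ is then obtained not by sprinkling but by iterating Lemma \ref{lemma6} up to $n$ times inside $G_{\tau_{2k}}\setminus F$ itself: for every sparse non-Hamiltonian expander one finds, among the $\Omega(n^2 d)$ booster \emph{pairs} guaranteed by Lemma \ref{lemmaBPs6} (pairs, \`a la Montgomery, because $G$ is not complete so the single closing edge $(s,t)$ need not exist in $G$), one that survives in $G_{\tau_{2k}}\setminus F$, with a union bound over all sparse $\Gamma$ and all $F$. If you want to salvage your outline you would need to replace the packing step by a sequential argument of exactly this resilience type, at which point you have reconstructed the paper's proof.
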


This extends Frieze and Krivelevich's result in the dense regime by softening the restriction $\lambda = o\left( \frac{d^{5/2}}{(n\log n)^{3/2}} \right)$ to just $\lambda \leq  \frac{cd^2}{n}$, as well as generalizing the Hamiltonicity property to $\mathcal{A}_{2k}$.

\subsection*{Paper structure}
In Section \ref{sec-per} we present some preliminaries. In Sections \ref{sec-superdirac} through \ref{sec-ndl} we give proofs of our results. In Section \ref{sec-remarks} we discuss the tightness of our results and some open questions.

\section{Preliminaries} \label{sec-per}
In this section we gather several definitions and results to be used in the following sections.

Throughout the paper, it is assumed that all logarithms are in the natural base, unless explicitly stated otherwise. We suppress the rounding notation occasionally to simplify the presentation.

The following standard graph theoretic notations will be used:
\begin{itemize}
\item $N_G(U)$ : the external neighbourhood of a vertex subset $U$ in the graph $G$, i.e.
$$
N_G(U) = \lbrace v \in V(G)\setminus U:\ v\ \mbox{has\ a\ neighbour\ in}\ U \rbrace.
$$
\item $e_G(U)$: the number of edges spanned by a vertex subset $U$ in a graph $G$. This will sometimes be abbreviated as $e(U)$, when the identity of $G$ is clear from the context.
\item $e_G(U,W)$: the number of edges of $G$ between the two disjoint vertex sets $U,W$. This will sometimes be abbreviated as $e(U,W)$ when $G$ is clear from the context.
\item $\nu _{G} (U, W)$: the maximum size of a matching in $G$ between the two disjoint vertex sets $U,W$.
\item $p_P(v),s_P(v)$: respectively, the predecessor and the successor of a vertex $v$ along the (oriented) path $P$. We will write $p(v),s(v)$ when $P$ is clear from the context.
\end{itemize}

With regards to the hitting time of a graph property $\mathcal{P}$ in some graph process, we abbreviate the notation to $\tau _{\mathcal{P}}$, assuming that the discussed graph process is clear from context.

\subsection*{Probabilistic and combinatorial bounds}

\begin{lemma}\label{binomial-coeff-bounds}
Let $1\leq \ell \leq k \leq n$ be integers. Then the following inequalities hold:
\begin{itemize}
\item $\binom{n}{k} \leq \left( \frac{en}{k} \right) ^k$\,;
\item $\frac{\binom{n-\ell}{k}}{\binom{n}{k}} \leq e^{-\frac{\ell \cdot k}{n}}$\,.
\end{itemize}
\end{lemma}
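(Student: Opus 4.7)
Both inequalities are standard, so the plan is short. For the first bound, I would start from the identity $\binom{n}{k} = \frac{n!}{k!(n-k)!}$, immediately drop the $(n-k)!$ by bounding the product $n(n-1)\cdots(n-k+1) \leq n^k$ in the numerator, and reduce to proving $k! \geq (k/e)^k$. This in turn follows from the Taylor expansion $e^k = \sum_{j\geq 0} k^j/j! \geq k^k/k!$, which rearranges to the desired lower bound on $k!$. Combining the two gives $\binom{n}{k} \leq n^k / (k/e)^k = (en/k)^k$.

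For the second bound, the natural move is to expand the ratio as a telescoping product
\[
\frac{\binom{n-\ell}{k}}{\binom{n}{k}} = \prod_{i=0}^{k-1} \frac{n-\ell-i}{n-i} = \prod_{i=0}^{k-1}\left(1 - \frac{\ell}{n-i}\right).
\]
Then I would apply the pointwise inequality $1 - x \leq e^{-x}$ to each factor, converting the product into $\exp\!\left(-\sum_{i=0}^{k-1} \frac{\ell}{n-i}\right)$. Since $n-i \leq n$ for every $i \geq 0$, each summand satisfies $\ell/(n-i) \geq \ell/n$, so the sum is at least $\ell k/n$, giving the claimed bound $e^{-\ell k/n}$.

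There is essentially no obstacle here; the only mild point of care is making sure that when $n - \ell - i$ is negative (in the factor with $i$ large), the inequality on the product of $1 - \ell/(n-i)$ terms still makes sense — this is automatic because in that case some factor becomes zero or negative while the exponential bound is positive, and in fact the hypothesis $\ell \leq k \leq n$ together with the convention $\binom{m}{k} = 0$ for $m<k$ keeps the argument clean.
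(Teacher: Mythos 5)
Your proof is correct and is the standard argument for both bounds; the paper itself states this lemma as a known preliminary without proof, and what you give is exactly the canonical derivation ($k!\geq (k/e)^k$ from the Taylor series of $e^k$ for the first part, the telescoping product with $1-x\leq e^{-x}$ for the second). Your handling of the degenerate case is also adequate: when $n-\ell<k$ the left-hand side of the second inequality vanishes by the convention $\binom{m}{k}=0$ for $m<k$, and otherwise every factor $\frac{n-\ell-i}{n-i}$ is positive, so the term-by-term exponential bound multiplies correctly.
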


\begin{lemma}\label{chernoff}
\emph{(Chernoff bound for binomial tails, see e.g. \cite{CHER})} Let $X\sim Bin(n,p)$. Then the following inequalities hold:
\begin{itemize}
\item $Pr(X <(1-\delta )np) \leq \exp \left( -\frac{\delta ^2 np}{2} \right)$ for every $\delta > 0$\,;
\item $Pr(X >(1+\delta )np) \leq \exp \left( -\frac{\delta ^2 np}{3} \right)$ for every $0 < \delta < 1$\,;
\item $Pr(X >(1+\delta )np) \leq \exp \left( -\frac{\delta np}{3} \right)$ for every $\delta \geq 1$\,.
\end{itemize}
\end{lemma}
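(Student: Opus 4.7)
The plan is to prove all three inequalities by the classical Chernoff exponential moment method. Write $X=\sum_{i=1}^n X_i$ with $X_i$ i.i.d.\ Bernoulli$(p)$. The workhorse is the moment generating function, which factorizes as $E[e^{tX}]=(1-p+pe^{t})^n$, and using $1+x\leq e^{x}$ one obtains the uniform bound $E[e^{tX}]\leq \exp\bigl(np(e^{t}-1)\bigr)$ for every $t\in\mathbb{R}$. Every tail bound below follows by applying Markov's inequality to an appropriate exponential transform of $X$ and then optimizing $t$.

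For the upper tails I would fix $t>0$ and write
\[
\Pr(X\geq(1+\delta)np)=\Pr(e^{tX}\geq e^{t(1+\delta)np})\leq e^{-t(1+\delta)np}\cdot\exp\bigl(np(e^{t}-1)\bigr).
\]
The optimal choice $t=\ln(1+\delta)$ produces the master bound $\Pr(X\geq(1+\delta)np)\leq \bigl(e^{\delta}/(1+\delta)^{1+\delta}\bigr)^{np}$. To reach the stated forms it then suffices to verify the two elementary real-analytic inequalities $(1+\delta)\ln(1+\delta)-\delta\geq \delta^{2}/3$ for $0<\delta<1$, and $(1+\delta)\ln(1+\delta)-\delta\geq \delta/3$ for $\delta\geq 1$; both follow by Taylor expansion around $\delta=0$ and monotonicity of the difference for $\delta\geq 1$.

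For the lower tail I would instead apply Markov's inequality to $e^{-tX}$ with $t>0$:
\[
\Pr(X<(1-\delta)np)\leq e^{t(1-\delta)np}\cdot \exp\bigl(np(e^{-t}-1)\bigr).
\]
Choosing $t=-\ln(1-\delta)$ (valid for $0<\delta<1$; for $\delta\geq 1$ the claim is vacuous because $X\geq 0$) yields $\bigl(e^{-\delta}/(1-\delta)^{1-\delta}\bigr)^{np}$, and the stated bound $\exp(-\delta^{2}np/2)$ reduces to the single analytic estimate $(1-\delta)\ln(1-\delta)+\delta\geq \delta^{2}/2$, which again is immediate from the Taylor expansion $(1-\delta)\ln(1-\delta)+\delta=\sum_{j\geq 2}\delta^{j}/(j(j-1))\geq \delta^{2}/2$.

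The only real obstacle is bookkeeping the three small real-variable inequalities cleanly, since the probabilistic content is entirely contained in the MGF computation. Because the lemma is cited as a standard fact from the reference \cite{CHER}, the proposal above is essentially a self-contained derivation that could equally well be replaced by a direct appeal to the cited source; no probabilistic input beyond independence of the $X_i$ is used.
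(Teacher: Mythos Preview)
Your proposal is correct: the moment generating function bound $E[e^{tX}]\le\exp(np(e^t-1))$ followed by Markov's inequality and the optimal choice of $t$ is the standard derivation, and the three auxiliary real-variable inequalities you isolate are all valid as you indicate. The paper itself gives no proof of this lemma whatsoever---it is stated as a citation to the literature (``see e.g.\ \cite{CHER}'') and used as a black box---so your self-contained argument is strictly more than what the paper provides, and you correctly anticipated this in your closing remark.
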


\subsection*{Graph theory}

In the paper we will use of the following definition of an expanding graph:

\begin{defi}
Let $G$ be a graph, and let $\alpha >0$ and $k$ be a positive integer. The graph $G$ is a $(k,\alpha )$--expander if for every vertex subset $U\subseteq V(G)$ with $|U| \leq k$ it holds that $|N_G(U)| \geq \alpha |U|$.
\end{defi}

\subsection*{Asymptotic equivalence of random models}

We use the following standard definitions of random subgraph models: $G_p$ is the space of random subgraphs of the graph $G$, obtained by keeping each edge in $E(G)$ with probability $p$, independently of all other edges. $G_m$ is the space of random subgraphs of $G$ obtained by randomly choosing exactly $m$ of the edges in $E(G)$ to keep, uniformly from among the $\binom{|E(G)|}{m}$ possibilities.

Throughout the paper we will often find it more convenient to bound the probability of some properties holding in $G_p$, while the nature of the problems this paper discusses requires us to provide bounds in the model $G_m$. The following lemma provides us with a useful connection between the two models:

\begin{lemma}[see e.g. \cite{RGBOOK} chapter 1.4]\label{lem-asym}
Let $G$ be a graph on $n$ vertices and $m$ edges, $\mathcal{P}$ a monotone increasing graph property, $0\leq t \leq m$ and $p = t / m$. Then
$$ Pr \left[ G_t \notin \mathcal{P} \right] \leq 3\sqrt{m} \cdot Pr \left[ G_p \notin \mathcal{P} \right] .$$
Furthermore, if $\lim _{n\rightarrow \infty} Pr \left[ G_p \notin \mathcal{P} \right] =0$ then $\lim _{n\rightarrow \infty} Pr \left[ G_t \notin \mathcal{P} \right] =0$, and if $\lim _{n\rightarrow \infty} Pr \left[ G_p \notin \mathcal{P} \right] =1$ then $\lim _{n\rightarrow \infty} Pr \left[ G_t \notin \mathcal{P} \right] =1$.
\end{lemma}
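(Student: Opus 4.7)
The main inequality is a standard two‑round exposure / conditioning argument. Write $M := |E(G_p)|$, which is distributed as $\mathrm{Bin}(m,p)$. Conditional on $M = s$, the edges kept by $G_p$ are a uniform random $s$‑subset of $E(G)$, so $(G_p \mid M = s) \stackrel{d}{=} G_s$. Hence
\[
\Pr[G_p \notin \mathcal{P}] \;=\; \sum_{s=0}^{m} \Pr[M = s]\cdot \Pr[G_s \notin \mathcal{P}]
\;\geq\; \Pr[M = t]\cdot \Pr[G_t \notin \mathcal{P}].
\]
Rearranging gives $\Pr[G_t\notin\mathcal{P}] \leq \Pr[G_p\notin\mathcal{P}]/\Pr[M = t]$, so the whole problem reduces to a uniform lower bound on $\Pr[M=t]$. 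Note that the monotonicity of $\mathcal{P}$ is not actually needed here; it will only be needed for the ``furthermore'' part.

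For the bound on $\Pr[M = t]$, with $p = t/m$ the distribution $\mathrm{Bin}(m,p)$ is centered at its mode $t$, so one expects $\Pr[M=t] \geq c/\sqrt{m}$. The concrete calculation is a direct application of Stirling's formula to
\[
\Pr[M = t] \;=\; \binom{m}{t}\Bigl(\tfrac{t}{m}\Bigr)^{t}\Bigl(\tfrac{m-t}{m}\Bigr)^{m-t},
\]
which yields $\Pr[M=t] \geq \tfrac{1}{3\sqrt{m}}$ (with minor care at the boundary cases $t=0$ or $t=m$, where both sides of the target inequality are trivial). Plugging this into the previous display gives the advertised $\Pr[G_t \notin \mathcal{P}] \leq 3\sqrt{m}\cdot \Pr[G_p \notin \mathcal{P}]$.

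The ``furthermore'' part does not follow from the $3\sqrt{m}$ factor alone, since this factor grows with $n$. Instead I would use a sandwich argument between two values of $p$. Fix an arbitrarily small $\varepsilon > 0$ and set $p^{-} = (1-\varepsilon)t/m$ and $p^{+} = (1+\varepsilon)t/m$. By conditioning on $M$ again and using that $\mathcal{P}$ is monotone increasing, one obtains
\[
\Pr[G_{p^{-}} \notin \mathcal{P}] \;\geq\; \Pr[M^{-} \leq t]\cdot \Pr[G_t \notin \mathcal{P}],\qquad
\Pr[G_{p^{+}} \notin \mathcal{P}] \;\geq\; \Pr[M^{+} \geq t]\cdot \Pr[G_t \in \mathcal{P}]^{c\text{-side}},
\]
where $M^{\pm}=|E(G_{p^{\pm}})|$. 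By Lemma~\ref{chernoff}, $\Pr[M^{-} \leq t]$ and $\Pr[M^{+} \geq t]$ both tend to $1$ as $n \to \infty$ (since $\mathbb{E}M^{\pm} = (1\pm\varepsilon) t$ and we may assume $m \to \infty$; the degenerate case of $m$ bounded is trivial). Since $\mathcal{P}$ is monotone, $\Pr[G_{p} \notin \mathcal{P}]$ is monotone non‑increasing in $p$, so convergence of $\Pr[G_p \notin \mathcal{P}]$ to $0$ (resp.\ $1$) at $p = t/m$ transfers to $p^{+}$ (resp.\ $p^{-}$), and the two displayed inequalities then transfer the limit to $\Pr[G_t \notin \mathcal{P}]$. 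The main obstacle is really only bookkeeping: making sure the Stirling estimate is uniform over all $t$ and that the sandwich step handles the borderline regimes (e.g.\ $t$ very close to $0$ or $m$) separately rather than through the Chernoff bound.
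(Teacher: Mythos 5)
The paper does not prove this lemma; it is quoted from \cite{RGBOOK}, so there is no internal proof to compare against. Your proof of the main inequality is correct and is the standard argument: conditioning on $M=|E(G_p)|$ gives $\Pr[G_p\notin\mathcal{P}]\ge\Pr[M=t]\Pr[G_t\notin\mathcal{P}]$, and Stirling gives $\Pr[M=t]\ge \tfrac{2e^{-1/6}}{\sqrt{2\pi m}}\ge\tfrac{1}{3\sqrt m}$ since $t(m-t)\le m^2/4$; you are also right that monotonicity is not needed for this half.

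The ``furthermore'' part, however, has a genuine directional error in the sandwich. Your two displayed inequalities bound $\Pr[G_t\notin\mathcal{P}]$ from above by (essentially) $\Pr[G_{p^-}\notin\mathcal{P}]$, and $\Pr[G_t\in\mathcal{P}]$ from above by $\Pr[G_{p^+}\in\mathcal{P}]$. But monotonicity transfers the hypothesis the other way: from $\Pr[G_p\notin\mathcal{P}]\to 0$ you only learn $\Pr[G_{p^+}\notin\mathcal{P}]\to 0$ (at $p^-$ the probability is \emph{larger} and uncontrolled), and from $\Pr[G_p\notin\mathcal{P}]\to 1$ you only learn $\Pr[G_{p^-}\notin\mathcal{P}]\to 1$, i.e.\ $\Pr[G_{p^-}\in\mathcal{P}]\to 0$ (at $p^+$ the containment probability is larger). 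In each case the inequality you would need is the one anchored at the probability you cannot control, and the one you can control yields only the vacuous bound $\le 1+o(1)$. Trying to repair this by anchoring the conditioning at $p^{\pm}$ on the other side fails too, since e.g.\ $\Pr[M^+\le t]\to 0$. The fix is to drop the $\varepsilon$-sandwich entirely and work at $p=t/m$ itself: since $t$ is (essentially) the median of $\mathrm{Bin}(m,t/m)$, one has $\Pr[M\le t]\ge \tfrac12$ and $\Pr[M\ge t]\ge\tfrac12$ (this also follows from summing your Stirling estimate over $s\in[t-\sqrt m,t]$, say), whence by monotonicity
$$\Pr[G_t\notin\mathcal{P}]\le 2\Pr[G_p\notin\mathcal{P}]\quad\text{and}\quad \Pr[G_t\in\mathcal{P}]\le 2\Pr[G_p\in\mathcal{P}],$$
which gives both limit statements at once (and, incidentally, a constant-factor improvement of the first part for monotone $\mathcal{P}$).
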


\subsection*{Rotations, boosters and booster pairs}

In our proofs we will strongly rely on P{\'o}sa's notion of rotations. In short, a rotation is a way in which one may obtain several paths on the same vertex set in a graph. More formally, let $P = (v_1,...,v_k)$ be a path in a graph $G$. Suppose that $(v_i,v_k)\in E(G)$ for some $1\leq i <k-1$. Then a new path $P'$ on the same set of vertices and with the same length and starting point can be obtained by removing the edge $(v_i,v_{i+1})$ from $P$ and replacing it with $(v_i,v_k)$. This operation is called a rotation.

By performing sequences of rotations, we can hope to get many alternative endpoints for longest paths in a graph. Under certain conditions this is indeed possible, as expressed by the following lemma:

\begin{lemma}[P{\'o}sa '76 \cite{POS}]
Let $k$ be an integer, $G$ be a graph on $n$ vertices such that $G$ is a $(k,2)$-expander, and let $P$ be a longest path in $G$. Then there are at least $k+1$ end vertices of paths in $G$ obtained from $P$ be sequences of rotations with  the starting point fixed.
\end{lemma}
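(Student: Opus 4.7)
The plan is to argue by contradiction. Let $T$ denote the set of end vertices of paths obtained from $P$ by sequences of rotations with the starting point $v_0$ held fixed, and suppose toward a contradiction that $|T| \leq k$; then the $(k,2)$--expansion assumption yields $|N_G(T)| \geq 2|T|$. The strategy is to upper-bound $|N_G(T)|$ structurally and contradict this inequality.

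First, I would observe that $N_G(T) \subseteq V(P)$. If some $w \notin V(P)$ were adjacent to an endpoint $t$ of a rotated path $P_t$, then, since $P_t$ has the same length as $P$ and $w$ is external to $P_t$, appending $w$ would yield a path strictly longer than $P$, contradicting maximality. Combined with the fact that $N_G(T) \cap T = \emptyset$ by the definition of external neighborhood, we get $N_G(T) \subseteq V(P) \setminus T$.

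The central step is the rotation-closure observation. For each $t \in T$, fix a rotated path $P_t$ from $v_0$ to $t$, oriented from $v_0$ towards $t$. For any neighbor $u \in N(t)$ with $u \neq p_{P_t}(t)$, removing the edge $(u, s_{P_t}(u))$ from $P_t$ and inserting the edge $(u, t)$ produces a new rotated path whose endpoint is $s_{P_t}(u)$. Hence $s_{P_t}(u) \in T$, giving the inclusion
$$
N(t) \;\subseteq\; \{p_{P_t}(t)\} \;\cup\; s_{P_t}^{-1}(T).
$$

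Finally, I would bound $|N_G(T)|$ by unioning these inclusions over $t \in T$. The set $A := \bigcup_{t \in T} \{p_{P_t}(t)\}$ has size at most $|T|$. By choosing the paths $P_t$ consistently, for example by building a BFS-style rotation tree rooted at the original endpoint $v_\ell$ in which each $t \in T \setminus \{v_\ell\}$ is witnessed by a single rotation of its parent path, one shows that the set $B := \bigcup_{t \in T} s_{P_t}^{-1}(T) \setminus T$ contributes at most $|T| - 1$ vertices outside of $T$, with the $-1$ arising because $v_0$ (the fixed starting point) can never serve as a predecessor $p_{P_t}(t)$ on a rotated path of length $\geq 2$. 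Combining these bounds yields $|N_G(T)| \leq 2|T| - 1$, contradicting $|N_G(T)| \geq 2|T|$, so $|T| \geq k+1$.

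The main obstacle is making the bound on $|B|$ precise: the function $s_{P_t}^{-1}$ depends on the chosen path $P_t$, so without a consistent choice of paths the union $\bigcup_t s_{P_t}^{-1}(T)$ could a priori be much larger than $|T|$. A careful BFS-style selection of the paths $P_t$ via the rotation tree, together with a small accounting for the role of $v_0$, is the delicate step needed to push the estimate from the easy bound $|N_G(T)| \leq 2|T|$ (which would only give equality and no contradiction) to the sharp $|N_G(T)| \leq 2|T| - 1$ that closes the argument.
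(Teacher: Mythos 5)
The paper does not prove this lemma --- it is quoted directly from P\'osa's 1976 paper --- so your attempt can only be measured against the standard argument. Your setup (defining $T$, showing $N_G(T)\subseteq V(P)\setminus T$, and the rotation observation $N(t)\subseteq\{p_{P_t}(t)\}\cup s_{P_t}^{-1}(T)$) is correct, but the proof has a genuine gap exactly at the step you yourself flag as delicate: you never actually control the union $\bigcup_{t\in T}s_{P_t}^{-1}(T)$, and the mechanism you propose for doing so does not work. A ``BFS-style rotation tree'' organizes which path is obtained from which, but it does not make the predecessor functions $p_{P_t}(\cdot)$ agree across different $t$, and even if they did agree you would only get $|A\cup B|\le |T|+|T|=2|T|$, which gives no contradiction with the expansion bound $|N_G(T)|\ge 2|T|$. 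Your claimed source of the crucial $-1$ --- that $v_0$ can never serve as a predecessor $p_{P_t}(t)$ --- is also wrong: that observation concerns the set $A$, not $B$, and in any case $v_0$ \emph{can} lie in $s_{P_t}^{-1}(T)$ (take $t'\in T$ whose predecessor on $P_t$ is $v_0$), so it does not save a vertex in the count of $B$.

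The missing ingredient is the path-neighborhood invariance lemma, which is the actual heart of P\'osa's proof. Writing $P=v_0v_1\cdots v_\ell$ and letting $T^-\cup T^+$ denote the neighbors of $T$ \emph{along the original path $P$}, one shows by induction on the number of rotations that any vertex $y\notin T\cup T^-\cup T^+$ has the same two path-neighbors on every rotated path $P_t$ as it has on $P$ (a single rotation only alters the path-neighborhoods of the old endpoint, the new endpoint, and the pivot, and the first two lie in $T$ while the pivot's path-neighborhood is shown to already meet $T$). Combined with your rotation observation, this forces $N_G(T)\subseteq (T^-\cup T^+)\setminus T$, and the sharp bound $|N_G(T)|\le 2|T|-1$ then follows because the original endpoint $v_\ell$ belongs to $T$ and has only \emph{one} neighbor on $P$ --- that, not anything about $v_0$, is where the $-1$ comes from. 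With this lemma in place the contradiction $2|T|\le |N_G(T)|\le 2|T|-1$ closes the argument; without it, your bound on $|B|$ is unsubstantiated and the proof does not go through.
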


We will also use the notion of a \emph{booster pair}, or a \emph{BP} in short, which was introduced by Montgomery \cite{MONT}.

\begin{defi}
Let $G$ be a graph. A pair $\lbrace e_1,e_2 \rbrace$ of non-edges $e_1,e_2\in \binom{V(G)}{2} \setminus E(G)$ is called a \emph{booster pair}, or a \emph{BP}, if the graph $G^{\prime}$ with edge set $E(G^{\prime})=E(G)\cup \lbrace e_1,e_2 \rbrace$ is either Hamiltonian or has a path longer than a longest path of $G$.
\end{defi}

\section{$\varepsilon$--super--Dirac graphs} \label{sec-superdirac}

In this section we provide a proof of Theorem \ref{thmDirac}. Throughout the section, $G$ is some fixed graph assumed to be $\varepsilon$--super--Dirac.

In order to prove the theorem, we will prove this sufficient condition: with high probability for every $F\subseteq G_{\tau _{2k}}$ with $\Delta (F) \leq 2k-2$, the graph $G_{\tau _{2k}} \setminus F$ is Hamiltonian. We aim to prove this by proving two main lemmas:

\begin{lemma} \label{lemma1}
With high probability for every $F\subseteq G_{\tau _{2k}}$ with $\Delta (F) \leq 2k-2$, $G_{\tau _{2k}} \setminus F$ contains a subgraph $\Gamma _1$ which is a connected $\left( \frac{n}{8},2\right)$--expander with at most $5\varepsilon ^2 n\log n +1$ edges.
\end{lemma}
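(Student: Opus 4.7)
The plan is to take $\Gamma_1 := H_1 \setminus F$, where $H_1 \subseteq G_{\tau_{2k}}$ is the subgraph formed by the first $m_1 := 5\varepsilon^2 n\log n$ edges of the random subgraph process (together with one fallback edge added for connectivity, accounting for the ``$+1$'' in the bound). Since the typical hitting time for minimum degree $2k$ is of order $n\log n \gg m_1$, we have $\tau_{2k} \geq m_1$ with high probability, so $H_1 \subseteq G_{\tau_{2k}}$; and by Lemma~\ref{lem-asym} we may analyze $H_1$ instead in the independent-edge model $G_{p_1}$ with $p_1 = m_1/|E(G)| = \Theta(\varepsilon^2 \log n / n)$.

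The core structural claim to establish is that with high probability $H_1$ is an $(n/8, 2k)$-expander, i.e., $|N_{H_1}(U)| \geq 2k|U|$ for every $U$ with $|U| \leq n/8$. This is enough: for any $F$ with $\Delta(F) \leq 2k-2$, each edge of $F$ incident to $U$ detaches at most one vertex from $N_{H_1}(U)$, and there are at most $(2k-2)|U|$ such edges, so $|N_{H_1 \setminus F}(U)| \geq |N_{H_1}(U)| - (2k-2)|U| \geq 2|U|$, which is exactly the $(n/8, 2)$-expansion required of $\Gamma_1$. I would prove the $(n/8, 2k)$-expansion of $H_1$ by the standard Chernoff-plus-union-bound calculation: for a fixed $U$ of size $u$, failure forces some $T \subseteq V\setminus U$ of size $n - u - 2ku$ to be disjoint from $U$ in $H_1$, and the $\varepsilon$-super-Dirac hypothesis yields $e_G(U,T) \geq un(3/8 + \varepsilon) - O(u^2)$; the constant $5$ in $m_1$ is calibrated exactly so that, after the union bound over $\binom{n}{u}$ choices of $U$ and summation over the range $1 \leq u \leq n/8$, the Chernoff exponent $p_1 e_G(U,T) = \Theta(\varepsilon^2 u\log n)$ dominates the entropy term $\log\binom{n}{u} + \log\binom{n-u}{2ku} = O(ku\log(n/u))$.

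For the connectivity of $\Gamma_1$, I would show separately that $H_1$ contains $\Omega(\varepsilon^2 n \log n)$ edges across every partition $(S, V\setminus S)$ of $V$ with $\min(|S|, |V\setminus S|) \geq n/8$ (which follows from $e_G(S, V\setminus S) = \Omega(\varepsilon n^2)$, an immediate corollary of super-Dirac, combined with Chernoff plus a union bound over the $2^n$ partitions). Since the total number of edges of $F$ is at most $(k-1)n \ll \varepsilon^2 n\log n$, at least one crossing edge of $H_1$ survives the removal of $F$ in every such partition, which, together with the $(n/8, 2)$-expansion (which forces every component of $H_1 \setminus F$ to have size exceeding $n/8$), implies connectedness. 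A single explicit connecting edge of $G_{\tau_{2k}} \setminus F$ is appended in the unlikely event of some residual deficiency, accounting for the ``$+1$''.

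The main obstacle I expect is the small-$|U|$ range of the $(n/8, 2k)$-expansion argument. At $|U| = 1$ the statement collapses to $\delta(H_1) \geq 2k$, and the best Chernoff bound on $d_{H_1}(v) \sim \mathrm{Bin}(d_G(v), p_1)$ (mean $\Theta(\varepsilon^2 \log n)$) gives per-vertex failure probability $n^{-\Theta(\varepsilon^2)}$, which for small $\varepsilon$ does not survive the union bound over vertices. The fix is presumably to handle the very small range of $|U|$ separately, using the trivial inequality $\delta(G_{\tau_{2k}} \setminus F) \geq 2k - (2k-2) = 2$ implied by the definition of $\tau_{2k}$ and the max-degree hypothesis on $F$, together (if required) with an explicit local backbone of $O(n)$ edges absorbed into the budget by a slight decrease of $m_1$. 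Managing the transition between the small-, medium-, and large-$|U|$ regimes against the tight edge budget $5\varepsilon^2 n\log n + 1$ will be the main technical work of the lemma.
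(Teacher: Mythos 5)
Your high-level architecture (sparsify, prove expansion by a union bound over sets and witness neighbourhoods, get connectivity from crossing edges) is the right one, but the specific sparsification and the specific expansion target you chose both break, and these are not boundary issues. First, the sparsification: the first $5\varepsilon^2 n\log n$ edges of the process form a graph distributed as $G_p$ with $p=\Theta(\varepsilon^2\log n/n)$, and such a graph with high probability contains isolated vertices --- the probability that a fixed vertex is isolated is roughly $e^{-p\,d_G(v)}\ge n^{-20\varepsilon^2}$, so about $n^{1-O(\varepsilon^2)}$ vertices have degree below $2k$. Thus $H_1$ is nowhere near an expander, and the deficiency is not confined to $|U|=1$: the deficient set has size polynomially close to $n$, so the proposed ``local backbone'' would have to control the joint expansion of arbitrary subsets of $n^{1-O(\varepsilon^2)}$ patched vertices whose patches can overlap. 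The paper's Lemma \ref{lemma1.1} sparsifies \emph{per vertex} instead: each vertex keeps a uniformly random set of exactly $5\varepsilon^2\log n$ of its $G_{\tau_{2k}}$--edges (all of them if its degree there is already at most $5\varepsilon^2\log n$). This makes the minimum degree outside the exceptional set $5\varepsilon^2\log n-2k+2$ deterministically, and the exceptional set $SMALL(G_{\tau_{2k}})$ has size only $n^{(1-\varepsilon)/2}$ with its members pairwise at distance greater than $4$ (properties (P2), (P3)), which is exactly what makes the small-- and medium--$|U|$ cases work.

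Second, the reduction to $(n/8,2k)$--expansion of $H_1$ is false for $k\ge 2$, not merely hard to prove. A set $U$ of size $n/8$ in an $\varepsilon$--super--Dirac graph is only guaranteed $|N_G(U)|\ge\delta(G)-|U|+1=(3/8+\varepsilon)n+1$ external neighbours, which is smaller than $2k|U|=kn/4$ once $k\ge 2$ and $\varepsilon<1/8$; one can realize this with an admissible $G$ containing a clique on $(1/2+\varepsilon)n$ vertices absorbing all neighbours of such a $U$, so even $G$ itself, let alone $H_1$, need not $2k$--expand. Correspondingly your union bound degenerates: the witness set $U\cup W$ has size $(2k+1)|U|\le(2k+1)n/8$, which exceeds $\delta(G)$ at the top of the range for $k\ge2$, so $e_G(U,T)$ admits no useful lower bound. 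The paper never asks for more than factor--$2$ expansion: $F$ is removed \emph{inside} the probabilistic construction (the $E_F(u)$ term in the sampling estimate and the $-2k+2$ loss in the degree bounds), and the witness sets $W$ have size only $2.5|U|$, keeping $|U\cup W|\le 3.5n/8<\delta(G)$. Your connectivity step, on the other hand, matches the paper's (P6) argument and is fine.
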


\begin{lemma} \label{lemma2}
With high probability for every $F\subseteq G_{\tau _{2k}}$ with $\Delta (F) \leq 2k-2$ and for every subgraph $\Gamma $ of $G_{\tau _{2k}} \setminus F$ which is a connected, non--Hamiltonian $\left( \frac{n}{8},2\right)$--expander with at most $6\varepsilon ^2 n\log n$ edges, $G_{\tau _{2k}}\setminus F$ contains a booster pair with respect to $\Gamma$. 
\end{lemma}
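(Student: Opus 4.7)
The plan is to show that for any admissible pair $(F,\Gamma)$, the base graph $G$ contains $\Omega(n^2)$ single booster edges for $\Gamma$ lying in $E(G) \setminus F \setminus E(\Gamma)$, so that $\Omega(n^4)$ booster pairs are available; and then to argue via a two-round (sprinkling) exposure together with a union bound that $G_{\tau_{2k}} \setminus F$ contains at least one such booster pair with high probability.

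First, I would apply P\'osa's rotation lemma to a longest path $P$ in $\Gamma$, exploiting the $(n/8, 2)$-expansion: fixing one endpoint $a$ and performing sequences of rotations yields a set $A$ of at least $n/8 + 1$ alternative other-endpoints $b'$; for each $b' \in A$, fixing $b'$ and rotating further from the other end yields a set $B_{b'}$ of at least $n/8 + 1$ alternative start-endpoints $a'$. For each pair $(b', a')$ with $a' \in B_{b'}$ and $\{a', b'\} \notin E(\Gamma)$, adding $\{a', b'\}$ to $\Gamma$ either closes a Hamilton cycle (if $V(P) = V$) or creates a cycle of length $|V(P)|$ which, by the connectivity and non-Hamiltonicity of $\Gamma$, can be extended via some $\Gamma$-edge from $V(P)$ to $V \setminus V(P)$ into a path strictly longer than $P$. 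Hence each such non-edge is a single booster, and the total number of such candidate pairs is at least $(n/8+1)^2 - |E(\Gamma)| = \Omega(n^2)$. To convert these abstract boosters into actual edges of $G$, I would use the super-Dirac property ($|N_G(v)| \geq (1/2+\varepsilon)n$) via a double counting over $b' \in A$ — possibly after enlarging $|A|$ and $|B_{b'}|$ through additional rotation layers or an iterative application of the $(n/8,2)$-expansion — to extract $\Omega(\varepsilon n^2)$ booster edges actually belonging to $E(G)$. Removing the at most $O(kn)=o(n^2)$ edges of $F$ still leaves $\Omega(n^2)$ booster edges in $G \setminus F \setminus E(\Gamma)$; any two of them automatically form a booster pair, yielding $\Omega(n^4)$ booster pairs in $G \setminus F$.

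For the probabilistic step I would use a two-round exposure. Pick a splitting index $m_1$ slightly below the typical value of $\tau_{2k}$ (so that $m_1 \leq \tau_{2k}$ whp), and condition on $G_{m_1}$; then every admissible $\Gamma$ is a subgraph of $G_{m_1}$. The edges of $G_{\tau_{2k}} \setminus G_{m_1}$ form a uniformly random subset of $E(G) \setminus E(G_{m_1})$ of size $\Omega(n \log n)$. For each fixed $(F, \Gamma)$, the probability that these remaining edges avoid all $\Omega(n^2)$ candidate booster pairs in $G \setminus F$ is $\exp(-\Omega(n \log n))$ by a standard Chernoff-type computation together with the $G_p$ to $G_m$ transfer of Lemma~\ref{lem-asym}. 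The number of choices of $\Gamma \subseteq G_{m_1}$ is $2^{|E(G_{m_1})|} = \exp(O(n \log n))$, and similarly for $F \subseteq G_{\tau_{2k}}$; choosing $m_1$ so that $\tau_{2k} - m_1$ is a sufficiently large multiple of $n \log n$ makes the union bound close.

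The hard part is the second half of Step~1 — extracting $\Omega(n^2)$ booster edges actually belonging to $E(G)$. P\'osa's lemma applied to a $(n/8, 2)$-expander guarantees endpoint-set sizes of just $n/8 + 1$, which falls below the threshold $(1/2 - \varepsilon)n$ that would force each $b' \in A$ to have $G$-neighbors inside $B_{b'}$ directly from the super-Dirac hypothesis. Overcoming this likely requires either a recursive use of P\'osa-style rotations to enlarge $|A|$ and $|B_{b'}|$ beyond the minimum expansion threshold to linear size, or a global bipartite-counting argument using the super-Dirac edge density between $A$ and $\bigcup_{b' \in A} B_{b'}$ rather than vertex-by-vertex edge existence. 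Once the $\Omega(n^2)$ booster-edge count is secured, the sprinkling and union bound proceed routinely.
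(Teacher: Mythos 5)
There is a genuine gap, and it sits exactly where you flagged it: the reduction to $\Omega(n^2)$ \emph{single} boosters cannot be carried out for a super--Dirac base graph, and neither of your proposed fixes repairs it. P\'osa's lemma applied to an $\left(\frac n8,2\right)$--expander gives endpoint sets of size $\frac n8+1$ and no more -- the expansion hypothesis simply does not apply to larger sets, so there is no "recursive enlargement" available; and the super--Dirac condition gives no lower bound whatsoever on $e_G(A,B)$ for two sets of total size $\frac n4$: an $\varepsilon$--super--Dirac graph can contain an independent set of size nearly $\left(\frac12-\varepsilon\right)n$, which may swallow $S$ and every $T_s$, leaving \emph{zero} closing edges $(s,t)$ in $E(G)$. (In the decisive case where the longest path of $\Gamma$ is already a Hamilton path there are no vertices off $P$ either, so no single boosters of the "extend outside $P$" type exist.) This is precisely why the paper works with Montgomery's genuine two--edge booster pairs: for $s\in S$, $t\in T_s$, each endpoint has at least $\left(\frac12+\frac{\varepsilon}{3}\right)n$ $G$--neighbours on the path, so by pigeonhole there are at least $\frac23\varepsilon n$ vertices $u$ on $P_s$ with both $(s,u)$ and $(p(u),t)$ in $E(G\setminus F)$; the \emph{pair} $\{(s,u),(p(u),t)\}$ is a booster (it closes the rotation cycle) even though neither edge alone is. This yields $\Omega(\varepsilon n^3)$ pairs with each edge in at most $\frac n2$ of them (Lemma \ref{lammaBPs}), and the requirement that \emph{both} edges of some pair appear forces the more delicate adaptive--sampling argument of Lemmas \ref{lemmaUse}--\ref{lemmaSpans} (the auxiliary graph $X$ on vertex set $E(G)$ and the bound $\Pr[X^p\mbox{ spans no edge}]\le 2\exp\left(-\frac{\varepsilon}{8100}n^2p\right)$) in place of the one--line Chernoff computation you envisage.

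Your union bound is also too crude to close even if the booster count were granted. Summing over all $2^{|E(G_{m_1})|}=\exp\left(\Theta(n\log n)\right)$ subgraphs of $G_{m_1}$ produces an entropy term whose constant does not shrink with $\varepsilon$, whereas the per--$(F,\Gamma)$ failure probability has exponent of order $\varepsilon\, n\log n$; and you cannot make $\tau_{2k}-m_1$ "a sufficiently large multiple of $n\log n$" because $\tau_{2k}\le m_2=\Theta(n\log n)$ itself. The paper instead sums only over subgraphs $\Gamma$ with at most $6\varepsilon^2 n\log n$ edges and $F$ with at most $(k-1)n$ edges, weighting by $p^{|E(\Gamma)|+|E(F)|}$, so the entropy is $O\left(\varepsilon^2\log(1/\varepsilon)\, n\log n\right)$, which is dominated by $\frac{\varepsilon}{8100}n\log n$ for small $\varepsilon$; the restriction to sparse $\Gamma$ (supplied by Lemma \ref{lemma1}) is therefore essential, not cosmetic.
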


Putting the two lemmas together we easily obtain the theorem:

Say we have found $h$ edge disjoint Hamilton cycles in $G_{\tau _{2k}}$, with $0\leq h \leq k-1$, and denote their union as $F$. By Lemma \ref{lemma1}, with high probability $G_{\tau _{2k}} \setminus F$ contains an $\left( \frac{n}{8},2\right)$--expanding subgraph $\Gamma _1$ with at most $5\varepsilon ^2 n\log n +1$ edges. If $\Gamma _1$ is not Hamiltonian, by Lemma \ref{lemma2}, $G_{\tau _{2k}}\setminus F$ contains a \emph{BP} with respect to $\Gamma _1$. By adding this \emph{BP} to $\Gamma _1$ we obtain a new sparse expander $\Gamma _2$, which is either Hamiltonian or has a longest path which is strictly longer than that of $\Gamma _1$. By continuing to apply Lemma \ref{lemma2} at most $n$ times, we obtain the theorem.

Throughout the proofs we will assume that $\varepsilon$ is small enough, without explicitly stating so.

\subsection{Proof of Lemma \ref{lemma1}}

\noindent For a graph $\Gamma$ on $n$ vertices, let $$d_0:=5\varepsilon ^2 \log n,$$ and $$SMALL(\Gamma ):=\lbrace v\in V(\Gamma ): d_{\Gamma}(v)\leq d_0 \rbrace.$$ We will first show that WHP $G_{\tau _{2k}}$ has the following properties:
\begin{itemize}
	\item[(P1)] $\Delta \left( G_{\tau _{2k}} \right) \leq 10\log n$;
	\item[(P2)] $\left| SMALL \left(G_{\tau _{2k}} \right) \right| \leq n^{\frac{1}{2}(1-\varepsilon)}$;
	\item[(P3)] $\forall u,v\in SMALL \left(G_{\tau _{2k}} \right) : dist_{G_{\tau _{2k}}}(u,v) > 4$;
	\item[(P4)] $\forall U\subset V(G)\ s.t.\ 4\varepsilon ^2\log n\leq |U| \leq \frac{5n}{\sqrt{\log n}}:e_{G_{\tau _{2k}}}(U)<0.1\cdot \varepsilon ^2|U|\log n$;
	\item[(P5)] $\forall U,W\subset V(G)\ disjoint\ s.t.\ \frac{n}{\sqrt{\log n}} \leq |U| \leq \frac{n}{8},|W|=2.5|U|:e_{G_{\tau _{2k}}}(U,V\setminus (U\cup W)) \geq 0.04|U|\log n$;
	\item[(P6)] $\forall U\subset V(G)\ s.t.\ \frac{n}{3} \leq |U|\leq \left( \frac{1}{2} + \frac{\varepsilon}{2} \right) n: e_{G_{\tau _{2k}} }\left( U, V(G)\setminus U \right) > n\sqrt{\log n}$.
\end{itemize}

\begin{proof} For each property, we will bound the probability of $G_{\tau _{2k}}$ failing to have it separately.\\
Let $N = |E(G)|$, and let
$$p_1 = \frac{\log n}{n},\ p_2 = \frac{2\log n}{n},\ m_1 = N\cdot p_1,\ m_2 = N\cdot p_2.$$

\begin{lemma} \label{lemmaTAU}
With high probability $m_1 \leq \tau _{2k} \leq m_2$.
\end{lemma}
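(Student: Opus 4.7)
My plan is to control $\tau_{2k}$ by transferring the problem from the random subgraph process to the independent--edge model $G_p$ via Lemma~\ref{lem-asym}. Since $\mathcal{D}_{2k}$ is monotone increasing, both directions of the equivalence are available: convergence to $0$ (respectively to $1$) of $\Pr[G_p\notin\mathcal{D}_{2k}]$ implies the same for $\Pr[G_t\notin\mathcal{D}_{2k}]$, so it suffices to establish one--vertex estimates in $G_{p_1}$ and $G_{p_2}$.

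For the upper bound $\tau_{2k}\leq m_2$ I show $\Pr[G_{p_2}\notin\mathcal{D}_{2k}]=o(1)$. For any vertex $v$ the degree $d_{G_{p_2}}(v)\sim\mathrm{Bin}(d_G(v),p_2)$ has mean at least $(\tfrac12+\varepsilon)n\cdot 2\log n/n=(1+2\varepsilon)\log n$. Direct estimation of the lower binomial tail, retaining only the dominant $i=2k-1$ term and using $(1-p_2)^{d_G(v)}\leq e^{-p_2(1/2+\varepsilon)n}=n^{-(1+2\varepsilon)}$, gives
\[
\Pr[d_{G_{p_2}}(v)<2k]\;\leq\;\sum_{i=0}^{2k-1}\binom{d_G(v)}{i}p_2^{\,i}(1-p_2)^{d_G(v)-i}\;=\;O\!\bigl(n^{-(1+2\varepsilon)}(\log n)^{2k-1}\bigr),
\]
so a union bound over the $n$ vertices yields $\Pr[G_{p_2}\notin\mathcal{D}_{2k}]=O(n^{-2\varepsilon}(\log n)^{2k-1})=o(1)$, as required.

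For the lower bound $\tau_{2k}>m_1$ I aim at $\Pr[G_{p_1}\notin\mathcal{D}_{2k}]\to 1$, i.e.\ w.h.p.\ some vertex has $G_{p_1}$-degree at most $2k-1$. Let $Y$ be the number of such vertices and $X_v$ the corresponding indicator. Using $d_G(v)\geq(\tfrac12+\varepsilon)n$, $(1-p_1)^{d_G(v)}\geq(1-p_1)^{n-1}=(1+o(1))/n$, and the $i=2k-1$ contribution, one computes $\mathbb{E}[Y]\geq(1-o(1))\bigl((\tfrac12+\varepsilon)\log n\bigr)^{2k-1}/(2k-1)!\to\infty$. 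A second--moment calculation then gives $\mathrm{Var}(Y)=O(\mathbb{E}[Y])$: the indicators $X_u,X_v$ for $u\neq v$ depend on disjoint sets of edges apart from the single possible edge $uv$, and conditioning on whether $uv\in G_{p_1}$ changes $\Pr[X_u=1]$ by a factor $1+o(1)$, so the covariances contribute a lower--order term. Chebyshev then yields $\Pr[Y=0]\leq\mathrm{Var}(Y)/\mathbb{E}[Y]^2=o(1)$, and invoking Lemma~\ref{lem-asym} gives $\tau_{2k}>m_1$ w.h.p.

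The main technical step is the second--moment computation in the lower bound; it follows the classical template used to pinpoint the minimum--degree threshold in $G(n,p)$, and the super--Dirac hypothesis ensures both that $\mathbb{E}[Y]\to\infty$ and that the edge--based dependence between the indicators is negligible, so no new obstacle arises. Everything else is routine Chernoff/binomial bookkeeping.
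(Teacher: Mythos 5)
Your proposal is correct. The upper bound $\tau_{2k}\leq m_2$ is handled exactly as in the paper: the same term-by-term binomial tail estimate giving $\Pr[d_{G_{p_2}}(v)<2k]=O(n^{-1-2\varepsilon}(\log n)^{2k-1})$ followed by a union bound. Where you diverge is the lower bound $\tau_{2k}>m_1$. The paper disposes of it in one line by coupling: since $G\subseteq K_n$, one has $\Pr[\delta(G_{p_1})\geq 2k]\leq \Pr[\delta(G(n,p_1))\geq 2k]=o(1)$, citing the classical fact that $G(n,\log n/n)$ whp contains vertices of degree below any fixed constant. You instead run a self-contained second-moment argument on $G_{p_1}$ itself, showing $\mathbb{E}[Y]\gtrsim(\log n)^{2k-1}\to\infty$ via $(1-p_1)^{d_G(v)}\geq(1+o(1))/n$ and controlling the covariances through the single shared edge $uv$. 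Both work; the paper's route is shorter here, but yours has the advantage of not relying on an external $G(n,p)$ result and of being exactly the argument that becomes necessary in the later sections (Lemmas \ref{lemmaTAUjumb} and \ref{lemmaTAU3}), where the base graph's degrees are too small or too irregular for the $K_n$ coupling to give anything and the paper itself falls back on Chebyshev. One small remark: your claim $\mathrm{Var}(Y)=O(\mathbb{E}[Y])$ is stronger than what the sketched covariance bound literally delivers; what you get cleanly is $\mathrm{Var}(Y)\leq\mathbb{E}[Y]+o(\mathbb{E}[Y]^2)$, which is all Chebyshev needs.
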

\begin{proof}
Since having minimum degree at least $2k$ is a monotone increasing property, by Lemma \ref{lem-asym} it suffices to show that
\begin{enumerate}
	\item $Pr\left[ \delta \left( G_{p_1} \right) \geq 2k \right] = o(1)$;
	\item $Pr\left[ \delta \left( G_{p_2} \right) < 2k \right] = o(1)$.
\end{enumerate}
We will prove both bounds.

\begin{enumerate}
	\item This bound is already well known, as $Pr\left[ \delta \left( G_{p_1} \right) \geq 2k \right] \leq Pr\left[ \delta \left( G(n,p_1) \right) \geq 2k \right] = o(1)$.
	\item Let $v\in V(G)$. The probability that $d_{G_{p_2}}(v) <2k$ is at most
	\begin{eqnarray*}
	Pr\left[ d_{G_{p_2}}(v) <2k \right] & \leq & \sum\limits_{i=0}^{2k-1} \binom{d_G(v)}{i}{p_2}^i(1-p_2)^{d_G(v)-i} \\
	& \leq & \sum\limits_{i=0}^{2k-1} \binom{n-1}{i}{p_2}^i(1-p_2)^{\left( \frac{1}{2} + \varepsilon \right) n -i} \\
	& \leq & \sum\limits_{i=0}^{2k-1} (6\log n)^i \exp \left( - \left(1 + 2\varepsilon \right) \log n \right) \\
	& \leq & n^{-1-\varepsilon} .
	\end{eqnarray*}
	By the union bound we get
	$$Pr\left[ \delta \left( G_{p_2} \right) < 2k \right] \leq \sum _{v\in V(G)} Pr\left[ d_{G_{p_2}}(v) <2k \right] \leq n^{-\varepsilon} = o(1).$$
\end{enumerate}

\end{proof}

\noindent Applying Lemma \ref{lem-asym}, given the conclusion of Lemma \ref{lemmaTAU}, we can argue that for a monotone increasing property $(P)$ it is sufficient to prove that $Pr\left( G_{p_1} \notin (P) \right) = o(1),$ and that for a monotone decreasing property it is sufficient to prove that $Pr\left( G_{p_2} \notin (P) \right) = o(1)$.

\begin{itemize}
	\item[(P1)]
	
	\begin{eqnarray*}
		Pr\left[ G_{p_2} \notin \bf{(P1)} \right]  & \leq & n\cdot Pr\left[ Bin\left( n,p_2\right) \geq 10\log n \right] \leq n\cdot \exp \left( -\frac{2\cdot 4\log n}{3} \right) = o(1).
	\end{eqnarray*}
	
	\item[(P2)] The probability that $G_{p_1} \notin (P2)$ is at most the probability that there is some subset $U\subseteq V(G)$ of size $n^{\frac{1}{2}(1-\varepsilon)}$ such that $e_{G_{p_1}}(U,V(G)\setminus U) \leq n^{\frac{1}{2}(1-\varepsilon)} \cdot d_0$. Since $|U| = o(n)$, there are at least $\left( \frac{1}{2}+\frac{9\varepsilon}{10} \right) |U|\cdot n$ edges of $G $ between $U$ and $V(G)\setminus U$. So
	\begin{eqnarray*}
		Pr\left[ G_{p_1} \notin \bf{(P2)} \right] & \leq & \binom{n}{n^{\frac{1}{2}(1-\varepsilon)}} \cdot Pr\left[ Bin\left( \left( \frac{1}{2}+\frac{9\varepsilon}{10} \right) n^{1.5-\frac{1}{2}\varepsilon},p_1 \right) \leq n^{\frac{1}{2}(1-\varepsilon)} \cdot d_0 \right] \\
		& \leq & \left( en^{\frac{1}{2}(1+\varepsilon)} \right) ^{n^{\frac{1}{2}(1-\varepsilon)}} \cdot \sum\limits_{i=0}^{n^{\frac{1}{2}(1-\varepsilon)} \cdot d_0} \binom{n^{1.5-\frac{1}{2}\varepsilon}}{i} \cdot {p_1}^i \cdot (1-p_1)^{\left( \frac{1}{2}+\frac{9\varepsilon}{10} \right) n^{1.5-\frac{1}{2}\varepsilon} - i} \\
		& \leq & \left( en^{\frac{1}{2}(1+\varepsilon)} \right) ^{n^{\frac{1}{2}(1-\varepsilon)}} \cdot (n^{\frac{1}{2}(1-\varepsilon)} \cdot d_0 +1) \binom{n^{1.5-\frac{1}{2}\varepsilon}}{n^{\frac{1}{2}(1-\varepsilon)} \cdot d_0} \cdot {p_1}^{n^{\frac{1}{2}(1-\varepsilon)} \cdot d_0} \\
		& & \cdot (1-p_1)^{\left( \frac{1}{2}+\frac{9\varepsilon}{10} \right) n^{1.5-\frac{1}{2}\varepsilon} - n^{\frac{1}{2}(1-\varepsilon)} \cdot d_0} \\
		& \leq & o(n)\cdot \left( en^{\frac{1}{2}(1+\varepsilon)} \cdot \left( \frac{en^{1.5-\frac{1}{2}\varepsilon}\cdot p_1}{n^{\frac{1}{2}(1-\varepsilon)} \cdot d_0 \cdot (1-p_1)} \right) ^{d_0 } \cdot \exp \left({-\left( \frac{1}{2}+\frac{4\varepsilon}{5} \right)np_1}\right)  \right) ^{n^{\frac{1}{2}(1-\varepsilon)}} \\
		& \leq & o(n)\cdot \left( en^{\frac{1}{2}(1+\varepsilon)} \cdot \left( \frac{e}{4\varepsilon ^2} \right) ^{5\varepsilon ^2 \log n } \cdot \exp \left({-\left( \frac{1}{2}+\frac{4\varepsilon}{5} \right)np_1}\right)  \right) ^{n^{\frac{1}{2}(1-\varepsilon)}} \\
		& \leq & n^{-\frac{1}{4}\varepsilon \cdot n^{\frac{1}{2}(1-\varepsilon)} } = o(1).
	\end{eqnarray*}
	
	\item[(P3)] As $m_1 \leq \tau _{2k} \leq m_2$ with high probability, it is sufficient to prove that with high probability $G_{m_2}$ does not contain a path of length at most 4 between two (not necessarily distinct) members of $SMALL\left( G_{m_1} \right)$.\\ 
	We will prove this by showing that:
	\begin{enumerate}
		\item With high probability $G_{m_1}$ does not contain a path of length at most 4 between two (not necessarily distinct) members of $SMALL\left( G_{m_1} \right)$;
		\item With high probability, adding $m_2-m_1 = m_1$ random edges of $G\setminus G_{m_1}$ to $G_{m_1}$ does not result in a path between two members of $SMALL\left( G_{m_1} \right)$.
	\end{enumerate}
	
	First, we bound the probability that a specific path $P$ of length $\ell$ is in $G_{m_1}$:
	$$
	Pr \left[ P \in G_{m_1} \right] = \frac{ \binom{N-\ell}{m_1-\ell} }{ \binom{N}{m_1} } \leq \left( \frac{m_1}{N} \right) ^{\ell} = \left( \frac{\log n}{n} \right) ^{\ell}.
	$$
	Next, we bound the probability that the two endpoints of a path $P$, denoted as $s,t$, are members of $SMALL\left( G_{m_1} \right)$, conditioned on the event $P \in G_{m_1}$. This probability is at most the probability that the vertex set $\{ s,t\}$ has at most $d_1 := 2\cdot d_0 -2$ edges between it and $V(G)\setminus \{s,t\}$ in $G_{m_1}$, not including the two edges belonging to $P$. Since there are at least $(1+\varepsilon )n$ edges of $G$ between $\{s,t\}$ and there rest of the graph, not including the two edges of $P$, we get
	
	\begin{eqnarray*}
		Pr\left[ s,t\in SMALL\left( G_{m_1}  \right) | P\in G_{m_1} \right] & \leq & \sum\limits_{i=0}^{d_1} \binom{2n-4}{i}\cdot \frac{ \binom{N-\ell -(1+\varepsilon )n}{m_1 -\ell -i} }{\binom{N-\ell}{m_1-\ell}} \\
		& \leq & (d_1+1)\binom{2n}{d_1}\cdot \frac{ \binom{N-\ell -(1+\varepsilon )n}{m_1 -\ell -d_1} }{\binom{N-\ell}{m_1-\ell}} \\
		& \leq & d_1\left( \frac{2en}{d_1} \right)^{d_1}\cdot \left( \frac{m_1-\ell}{N-\ell} \right)^{d_1}\\
		& & \cdot \exp \left( -\frac{(m_1-\ell -d_1)((1+\varepsilon) n-d_1)}{N-\ell -d_1} \right) \\
		& \leq & \left( 10\varepsilon ^2 \right)^{-10\varepsilon ^2 \log n} \exp (-(1+0.9\varepsilon ) \log n) \\
		& \leq & n^{-1-0.8\varepsilon}.
	\end{eqnarray*}
	
	Here, in the third inequality we used the fact that $\binom{N-\ell}{m_1-\ell} \ge \left( \frac{N-\ell}{m_1-\ell} \right) ^{d_1} \cdot \binom{N-\ell -d_1}{m_1-\ell -d_1}$, and Lemma \ref{binomial-coeff-bounds}.
	
	Apply the union bound to bound the probability that there is a path $P\in G_{m_1}$ of length $1\leq \ell \leq 4$, such that both of its endpoints are in $SMALL\left( G_{m_1} \right)$:
	
	$$
	Pr\left[ \exists P\in G_{m_1} :\ s,t\in SMALL\left( G_{m_1} \right) \right] \leq \sum_{\ell =1}^4 n^{\ell +1}\cdot \left( \frac{\log n}{n} \right) ^{\ell} \cdot n^{-1-0.8\varepsilon} = o(1).
	$$
	
	Finally, we bound the probability that adding $m_2 - m_1$ random edges to $G_{m_1}$ results in a path between two vertices of $SMALL\left( G_{m_1} \right)$. Since we already proved that with high probability $G_{m_1} \in (P2)$ and $G_{m_2} \in (P1)$, we can assume for the sake of calculation that indeed $G_{m_1},G_{m_2}$ have these properties.\\
	For the set of $m_2-m_1$ added edges to close a path, at least one of the edges must have both its vertices inside the set of vertices at distance at most 3 from $SMALL\left( G_{m_1} \right)$ in $G_{m_2}$ . By $(P1),(P2)$, this set is of size at most $n^{\frac{1}{2}(1-\varepsilon )}\cdot (10\log n)^3 \leq n^{\frac{1}{2}-0.4\varepsilon}$. By the union bound, the probability that at least one of the added edges is in this set is at most
	$$
	(m_2-m_1)\cdot \frac{ \binom{n^{\frac{1}{2}-0.4\varepsilon}}{2} }{N-m_2} = o(1).
	$$
	
	\item[(P4)]
	\begin{eqnarray*}
		Pr\left[ G_{p_2} \notin {\bf (P4)} \right] & \leq & \sum \limits _{i=4\varepsilon ^2\log n}^{\frac{5n}{\sqrt{\log n}}} \binom{n}{i} Pr\left[ Bin\left( \binom{i}{2}, p_2 \right) \geq 0.1\varepsilon ^2i\log n \right] \\
		& \leq & \sum \limits _{i=4\varepsilon ^2\log n}^{\frac{5n}{\sqrt{\log n}}} \left( \frac{en}{i} \right) ^i \cdot \left( \frac{5ep_2\cdot i^2}{\varepsilon ^2 i\log n} \right) ^{0.1\varepsilon ^2 i\log n} \\
		& \leq & \sum \limits _{i=4\varepsilon ^2\log n}^{\frac{5n}{\sqrt{\log n}}} \left( \frac{en}{i} \right) ^i \cdot \left( \frac{10e\cdot i}{\varepsilon ^2 n} \right) ^{0.1\varepsilon ^2 i\log n} \\
		& \leq & \exp \left( {-\omega (\log n) } \right) .
	\end{eqnarray*}

	\item[(P5)] We will use the fact that if $|U| = i, |W|=2.5i$ and $\frac{n}{\sqrt{\log n}} \leq i \leq \frac{n}{8}$ then, since $\delta (G) \ge \left(\frac{1}{2} + \varepsilon \right) n$, we have
	\begin{eqnarray*}
	e_G\left( U,V(G)\setminus (U \cup W) \right) & \geq & \delta (G)\cdot |U| -2e_G(U)-e_G(U,W) \\
	& \ge & i\cdot \left( \frac{1}{2} + \varepsilon \right) n -3.5i^2\\
	& \geq & \frac{1}{20}i\cdot n.
	\end{eqnarray*}
	
	Therefore by the union bound
	\begin{eqnarray*}
		Pr\left[ G_{p_1} \notin {\bf (P5)} \right] & \leq & \sum\limits_{i=\frac{n}{\sqrt{\log n}}}^{ n/8} \binom{n}{i}\binom{n}{2.5i} Pr\left[ Bin\left( 0.05 in , p_1\right) <0.04i\log n \right] \\
		& \leq & 3^n\cdot \exp \left( -\Omega \left( n^2p_1/\sqrt{\log n} \right) \right) = o(1) .
	\end{eqnarray*}
	
	\item[(P6)] We will use the fact that if $|U| = i \leq \left( \frac{1}{2} + \frac{\varepsilon}{2} \right) n$ then $e_G\left( U,V(G)\setminus U \right)\geq i\cdot \frac{\varepsilon}{2}\cdot n$. By the union bound
	\begin{eqnarray*}
		Pr\left[ G_{p_1} \notin {\bf (P6)} \right] & \leq & \sum\limits_{i=\frac{n}{3}}^{\left( \frac{1}{2} + \frac{\varepsilon}{2}\right) n} \binom{n}{i} Pr\left[ Bin\left( \varepsilon in/2 , p_1 \right) \leq n\sqrt{\log n} \right] \\
		& \leq & 2^n\cdot \exp \left( - \Omega \left( n \log n \right) \right) = o(1) .
	\end{eqnarray*}
	
\end{itemize}

\end{proof}

\begin{lemma} \label{lemma1.1}
	Let $\Gamma$ be a graph on $n$ vertices such that $\delta \left( \Gamma \right) \geq 2k$ and $\Gamma \in (P1)$--$(P5)$, and let $F\subseteq \Gamma$ with $\Delta (F) \leq 2k-2$. Then $\Gamma \setminus F$ contains a subgraph $\Gamma _0$ which is an $\left( \frac{n}{8},2 \right)$--expander with at most $5\varepsilon ^2 n\log n$ edges.
\end{lemma}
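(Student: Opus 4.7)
The approach is to construct $\Gamma_0$ explicitly as a sparsification of $\Gamma \setminus F$ and then verify $(n/8,2)$-expansion through a case analysis on $|U|$. Let $A = V(\Gamma) \setminus SMALL(\Gamma)$. I would include in $\Gamma_0$ all edges of $\Gamma \setminus F$ incident to $SMALL(\Gamma)$---contributing only $O(|SMALL(\Gamma)| \cdot \Delta(\Gamma)) = o(n)$ edges by (P1)+(P2)---together with, for each $v \in A$, a selected set of exactly $d_0 = 5\varepsilon^2 \log n$ edges of $\Gamma \setminus F$ at $v$; such a set exists since $d_\Gamma(v) > d_0$ and at most $2k - 2 \ll d_0$ edges at $v$ lie in $F$. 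The resulting total is at most $d_0 \cdot |A| + o(n) \leq 5\varepsilon^2 n \log n$.

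For expansion, suppose toward a contradiction that some $U \subseteq V(\Gamma)$ with $|U| \leq n/8$ has $|N_{\Gamma_0}(U)| < 2|U|$, so $W = U \cup N_{\Gamma_0}(U)$ satisfies $|W| < 3|U|$; split $U = U_A \sqcup U_S$ according to membership in $A$ or $SMALL(\Gamma)$. In the \emph{small regime} $|U| \leq d_0/3$: if $U_A \neq \emptyset$, any $v \in U_A$ has $d_0$ distinct $\Gamma_0$-neighbours, giving $|N_{\Gamma_0}(U)| \geq d_0 - |U| + 1 \geq 2|U|$; otherwise $U \subseteq SMALL(\Gamma)$, and (P3) (pairwise distance exceeding $4$) together with $\delta(\Gamma \setminus F) \geq 2$ yields disjoint $\Gamma_0$-neighbourhoods in $A$, each of size $\geq 2$, again giving $|N_{\Gamma_0}(U)| \geq 2|U|$. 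In the \emph{intermediate regime} $d_0/3 \leq |U| \leq n/(3\sqrt{\log n})$, double-counting $\Gamma_0$-degrees of $U$ yields $2 e_\Gamma(W) \geq d_0 |U_A| + 2|U_S|$, while the (P4) bound $e_\Gamma(W) < 0.3 \varepsilon^2 |U| \log n$ forces $|U_A|$ to be a small fraction of $|U_S|$; combined with (P2), this squeezes $|U|$ down to $O(n^{(1-\varepsilon)/2})$, and a refinement using (P3) (each vertex of $U_A$ lies in the $\Gamma_0$-neighbourhood of at most one SMALL vertex) closes the case.

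In the \emph{large regime} $n/(3\sqrt{\log n}) \leq |U| \leq n/8$, I would pad $N_{\Gamma_0}(U)$ to a set $W'$ disjoint from $U$ with $|W'| = \lfloor 2.5|U| \rfloor$ and invoke (P5) to obtain at least $0.04 |U| \log n$ edges of $\Gamma$ from $U$ to $V \setminus (U \cup W')$. None of these can be incident to $U_S$ (else the edge would already sit in $\Gamma_0$, producing a $\Gamma_0$-neighbour outside $W' \supseteq N_{\Gamma_0}(U)$), and only $O(|U|)$ can lie in $F$, so $\Omega(|U| \log n)$ external $\Gamma \setminus F$-edges emanate from $U_A$; choosing the $d_0$ edges at each $v \in A$ randomly and independently, a union bound over $(U, W')$ shows that with positive probability some such external edge is selected into $\Gamma_0$, contradicting $N_{\Gamma_0}(U) \subseteq W'$. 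I expect the \textbf{main obstacle} to be this last regime: the $\exp(O(|U| \log n))$ pairs $(U, W')$ must be defeated by the per-pair failure probability, and since (P5) supplies only $0.04 |U| \log n$ external edges, a careful choice of the random scheme (or a more delicate deterministic exploitation of (P5) and (P1)) will be needed to close the union bound.
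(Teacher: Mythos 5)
Your construction and case analysis are essentially the paper's own proof: all edges at $SMALL(\Gamma)$ plus $d_0$ randomly chosen edges at each other vertex, a deterministic verification of expansion for small and intermediate $|U|$ via (P3) and (P4), and a probabilistic argument with a union bound via (P5) for large $|U|$. The ``main obstacle'' you flag in the large regime is not actually an obstacle: once $|U|\geq n/\sqrt{\log n}$, each $v\in U_A$ misses all of its external $\Gamma\setminus F$ edges with probability at most $\exp\bigl(-\tfrac{d_0}{\Delta(\Gamma)}\,d_{\Gamma}(v,\mathrm{out})\bigr)\leq\exp\bigl(-\tfrac{1}{2}\varepsilon^2 d_{\Gamma}(v,\mathrm{out})\bigr)$ by (P1), and multiplying over $v$ gives a per-pair failure probability of $\exp\bigl(-\Omega(\varepsilon^2|U|\log n)\bigr)=\exp(-\omega(n))$, which beats the at most $4^n$ choices of $(U,W')$ with room to spare; no refinement of the random scheme is needed. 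Two small repairs: your large regime starts at $|U|\geq n/(3\sqrt{\log n})$ but (P5) only applies from $n/\sqrt{\log n}$, so move the boundary up to $n/\sqrt{\log n}$ (the intermediate regime still fits inside the range of (P4), which reaches $5n/\sqrt{\log n}$); and a vertex $v\notin SMALL(\Gamma)$ need not have $d_0$ edges in $\Gamma\setminus F$ (only $>d_0-(2k-2)$), so select $d_0$ edges of $\Gamma$ and then delete those of $F$, as the edge count $\leq 5\varepsilon^2 n\log n$ then follows from the fact that $SMALL$ vertices have degree at most $d_0$ by definition, rather than from (P1)+(P2).
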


\begin{proof}
Consider the following construction of a random subgraph $\Gamma _0$ of $\Gamma \setminus F$ with at most $5\varepsilon ^2n\log n$ edges:\\
For each $v\in SMALL(\Gamma )$ let $E_v$ be $E_{\Gamma}(v)$, and for each $v\in V(\Gamma )\setminus SMALL(\Gamma )$ let $E_v$ be a subset of $E_{\Gamma }(v)$ of size exactly $5\varepsilon ^2\log n$ , chosen uniformly at random, and set $E(\Gamma _0) = \bigcup_{v\in V(\Gamma)}E_v \ \setminus E(F)$.\\
\noindent Clearly, $\Gamma _0$ has at most $5\varepsilon ^2 n\log n$ edges, and has minimum degree at least $\min \{\delta (\Gamma ) - 2k+2 ,5\varepsilon ^2 \log n -2k+2 \}$, which is at least 2. We will show that it is also an $\left( \frac{n}{8},2 \right)$--expander with positive probability:\\
Let $U\subseteq V(\Gamma )$ be some vertex subset, and denote: $U_1 := U\cap SMALL(\Gamma )$, $U_2 := U\setminus U_1$, and $n_1,n_2$ the sizes of $U_1,U_2$, respectively. We will consider different cases for $n_2$.\\
First, we will show that for $n_2 \leq \frac{n}{\sqrt{\log n}}$ we have $|N_{\Gamma _0}(U)| \geq 2|U|$ with probability 1. By $(P3)$, we deduce the following properties:
\begin{itemize}
	\item $|N_{\Gamma _0}(U_1)| \geq 2n_1$;
	\item $| \left( U_1 \cup N_{\Gamma _0}(U_1) \right) \cap \left( U_2 \cup N_{\Gamma _0}(U_2) \right) | \le n_2$.
\end{itemize}
Consider two cases:
\begin{enumerate}
	\item $n_2<\varepsilon ^2 \log n -k$: Let $v\in U_2$, then $d_{\Gamma _0}(v) \geq 5\varepsilon ^2\log n -2k +2 \geq 5n_2$. So
		\begin{eqnarray*}
		|N_{\Gamma _0}(U)| & = & |N_{\Gamma _0}(U_2) \setminus U_1 | + |N_{\Gamma _0}(U_1) \setminus (U_2 \cup N_{\Gamma _0}(U_2))|\\
		& \geq & d_{\Gamma _0}(v) - 2n_2 + 2n_1 - n_2 \geq 2n_1 + 2n_2  = 2|U|.
	\end{eqnarray*}
	\item $\varepsilon ^2 \log n -k\leq n_2 \leq \frac{n}{\sqrt{\log n}}$: First we observe that $\left| N_{\Gamma _0}(U_2) \right| \geq 4n_2$. Assume otherwise, and let $W$ be some set of size $4n_2$ containing $N_{\Gamma _0}(U_2) $. By $(P4)$ we have
	\begin{eqnarray*}
		(5\varepsilon ^2 \log n -2k+2) \cdot n_2 & \leq & \sum _{u\in U_2}d_{\Gamma _0}(u) \leq 2e_{\Gamma _0}(U_2 \cup W) \leq 2e_{\Gamma}(U_2 \cup W) < \varepsilon ^2 \log n \cdot n_2
	\end{eqnarray*}
	--- a contradiction. So
	\begin{eqnarray*}
		|N_{\Gamma _0}(U)| & = & |N_{\Gamma _0}(U_2) \setminus U_1 | + |N_{\Gamma _0}(U_1) \setminus (U_2 \cup N_{\Gamma _0}(U_2))|\\
		& \geq & 4n_2 - n_2 + 2n_1 - n_2 = 2n_1 + 2n_2  = 2|U|.
	\end{eqnarray*}	
\end{enumerate}

\noindent Finally, we will bound from above the probability that there is a set $U$ with $\frac{n}{\sqrt{\log n}} \leq n_2 \leq \frac{n}{8}$ such that $|N_{\Gamma _0}(U)| \leq 2|U|$. If this is the case, then there is some set $W'$ of size $2|U|$ containing $N_{\Gamma _0}(U)$, and therefore, in particular, $e_{\Gamma _0}(U_2,V(\Gamma)\setminus (U_2\cup W)) = 0$, where $W:= W' \cup U_1$. Observe that
$$
|W| = |W'|+|U_1| = 2(|U_1|+|U_2|)+|U_1| =  2n_2 + 3n_1 \le 2n_2 + 3|SMALL(\Gamma )| \le 2.5n_2,
$$
and therefore, by $(P5)$, $e_{\Gamma}(U_2,V\setminus (U_2\cup W)) \geq 0.04n_2\log n$. We use this to bound the probability that such $U,W'$ exist.\\
\noindent Let $u\in U_2$. The probability that $E_u\cap E_{\Gamma }(u,V(\Gamma) \setminus (U_2\cup W)) \subseteq E_F(u)$ is at most
	$$
		\frac{ \binom{d_{\Gamma }(u)-d_{\Gamma }(u,V(\Gamma )\setminus (U_2\cup W)) + d_F(u)}{5\varepsilon ^2\log n} }{ \binom{d_{\Gamma }(u)}{5\varepsilon ^2\log n} } \leq \exp \left( - \frac{5\varepsilon ^2\log n}{\Delta (\Gamma )} \cdot \left( d_{\Gamma} \left( u,V(\Gamma )\setminus (U_2\cup W) \right) +2k-2 \right) \right) .
	$$
	As the events $E_u\cap E_{\Gamma }(u,V(\Gamma) \setminus (U_2\cup W) \subseteq E_F(u)$ are independent for distinct vertices $u$, multiplying the probabilities, and using the fact that since $\Gamma \in (P1)$ we have that $\Delta (\Gamma ) \leq 10 \log n$, we get
	\begin{eqnarray*}
	Pr\left[ e_{\Gamma _0}(U_2,V(\Gamma)\setminus (U_2\cup W)) = 0 \right] & = & \prod_{u\in U_2}Pr \left[E_u\cap E_{\Gamma }(u,V(\Gamma) \setminus (U_2\cup W) \subseteq E_F(u) \right] \\
	& \leq & \prod_{u\in U_2} \exp \left( - \frac{1}{2}\varepsilon ^2 \cdot \left( d_{\Gamma }(u,V(\Gamma )\setminus (U_2\cup W)) +2k-2 \right)\right) \\
	& = & \exp \left( - \frac{1}{2}\varepsilon ^2 \cdot \left( e_{\Gamma }(U_2,V(\Gamma )\setminus (U_2\cup W)) + (2k-2)|U_2| \right) \right) \\
	& = & \exp \left( - \omega \left( n \right) \right) ,
	\end{eqnarray*}
	
	\noindent where the last inequality is derived from the fact that $\Gamma \in (P5)$.\\
	\noindent Using the union bound, the probability that such $U,W$ exist is at most $3^n \cdot \exp \left( - \omega \left( n \right) \right) = o(1)$.\\
	\noindent In particular, with positive probability the random subgraph $\Gamma _0$ is indeed an expander as required, and therefore there exists such a subgraph of $\Gamma \setminus F$.
\end{proof}

\begin{lemma} \label{lemma1.2}
	Let $\Gamma$ be a graph on $n$ vertices such that $\delta \left( \Gamma \right) \geq 2k$ and $\Gamma \in (P1)$--$(P6)$, and let $F\subseteq \Gamma$ with $\Delta (F) \leq 2k-2$. Then $\Gamma \setminus F$ contains a subgraph $\Gamma _1$ which is a connected $\left( \frac{n}{8},2 \right)$--expander with at most $5\varepsilon ^2 n\log n + 1$ edges.
\end{lemma}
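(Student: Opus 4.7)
The plan is to take the expander $\Gamma_0$ produced by Lemma \ref{lemma1.1} (an $(n/8,2)$-expanding subgraph of $\Gamma \setminus F$ with at most $5\varepsilon^2 n\log n$ edges) and, if $\Gamma_0$ is not already connected, adjoin a single edge of $\Gamma \setminus F$ between two of its components. The lemma then reduces to two tasks: bounding the number of components of $\Gamma_0$, and locating a suitable joining edge that lies outside $F$.

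For the component bound I would use the $(n/8,2)$-expansion of $\Gamma_0$ alone. Let $C$ be any connected component of $\Gamma_0$. If $|C| \leq n/8$ then taking $U := C$ gives $|N_{\Gamma_0}(U)| = 0 < 2|U|$, contradicting the expansion property; hence $|C| > n/8$. Now pick any $U \subset C$ with $|U| = \lfloor n/8 \rfloor$. Since $N_{\Gamma_0}(U) \subseteq C \setminus U$, the expansion yields $|C| \geq |U| + |N_{\Gamma_0}(U)| \geq 3\lfloor n/8 \rfloor$. Because $3 \cdot (3n/8) > n$, the graph $\Gamma_0$ can have at most two components. If it has exactly one, set $\Gamma_1 := \Gamma_0$ and we are done.

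Otherwise write $C_1, C_2$ for the two components with $|C_1| \leq |C_2|$, so that $3n/8 \leq |C_1| \leq n/2$; this interval lies strictly inside $[n/3,(1/2+\varepsilon/2)n]$, which is exactly the range of validity of property $(P6)$. Hence $e_\Gamma(C_1, C_2) = e_\Gamma(C_1, V(\Gamma) \setminus C_1) > n\sqrt{\log n}$. The number of edges of $F$ incident to $C_1$ is at most $\Delta(F)\cdot |C_1| \leq (2k-2)(n/2) = (k-1)n$, so for $n$ large enough there is an edge $e \in E(\Gamma) \setminus E(F)$ between $C_1$ and $C_2$. Setting $\Gamma_1 := \Gamma_0 \cup \{e\}$ produces a subgraph of $\Gamma \setminus F$ with at most $5\varepsilon^2 n\log n + 1$ edges (note that $e \notin E(\Gamma_0)$ automatically, since $C_1,C_2$ are distinct components of $\Gamma_0$); it is connected by construction and remains $(n/8,2)$-expanding because adding edges can only help expansion.

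No step here is a genuine obstacle: the quantitative heavy lifting has already been carried out in the verification of $(P1)$--$(P6)$ and in Lemma \ref{lemma1.1}. The only point that needs mild care is checking that $|C_1|$ falls into the window of $(P6)$, which follows at once from the $3n/8$ lower bound on component sizes combined with $|C_1| \leq n/2$.
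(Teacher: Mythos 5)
Your proposal is correct and follows essentially the same route as the paper: take the expander $\Gamma_0$ from Lemma \ref{lemma1.1}, observe that $\left(\frac{n}{8},2\right)$--expansion forces at most two components each of size at least $3n/8$, and use $(P6)$ (whose window $[n/3,(1/2+\varepsilon/2)n]$ indeed contains the size of the smaller component) to find a joining edge surviving the deletion of $F$. The extra details you supply --- the explicit component-size argument and the subtraction of the at most $(2k-2)|C_1|$ edges of $F$ against the $n\sqrt{\log n}$ bound --- are exactly what the paper leaves implicit.
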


\begin{proof}
	The subgraph $\Gamma _0$ of $\Gamma \setminus F$ is an $\left( \frac{n}{8},2 \right)$--expander, and therefore has at most $2$ connected components, each of size at least $3n/8$.\\
	By $(P6)$, if $\Gamma _0$ is not connected, then $\Gamma \setminus F$ contains an edge between the two connected components. In particular, $\Gamma _1$ can be obtained by adding this edge, if necessary.
\end{proof}

\subsection{Proof of Lemma \ref{lemma2}}

\begin{lemma} \label{lammaBPs}
Let $\Gamma \subseteq G$ be a connected, non--Hamiltonian $\left( \frac{n}{8},2 \right)$--expander with at most $6\varepsilon ^2 n\log n$ edges, and let $F\subseteq G$ with $\Delta (F) \leq 2k-2$. then $G\setminus F$ contains a set $B\subseteq \binom{E(G)}{2}$ of booster pairs with respect to $\Gamma$, such that
\begin{itemize}
	\item $|B| \geq \frac{\varepsilon}{800}\cdot n^3$;
	\item Every edge of $E(G\setminus F)$ is a member of at most $\frac{n}{2}$ booster pairs in $B$.
\end{itemize}
\end{lemma}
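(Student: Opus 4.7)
Fix a longest path $P = (u_0, u_1, \ldots, u_\ell)$ of $\Gamma$, so every $\Gamma$-neighbor of $u_0$ or $u_\ell$ lies in $V(P)$. Apply P\'{o}sa's rotation lemma to the $(n/8, 2)$-expander $\Gamma$ twice --- first with $u_0$ fixed to obtain a set $S$ of at least $n/8 + 1$ alternative right-endpoints of longest paths $P_v$ from $u_0$ on vertex set $V(P)$, and then, for each $v \in S$, fixing $v$ on the reversed $P_v$ to obtain $S_v$ of size at least $n/8 + 1$ and a longest path $P_{v,w} = (v = a_0, a_1, \ldots, a_\ell = w)$ in $\Gamma$ for each $w \in S_v$, all on vertex set $V(P)$. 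The non-Hamiltonicity of $\Gamma$ together with the maximality of $P$ and the connectivity of $\Gamma$ ensures that $\{v, w\} \notin E(\Gamma)$ for every such pair (closing the cycle on $V(P)$ would give either a Hamilton cycle of $\Gamma$ or a path longer than $P$ via a $\Gamma$-edge to $V(G) \setminus V(P)$).

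The main construction of booster pairs is the rotation-closure form: for each triple $(v, w, j)$ with $v \in S$, $w \in S_v$, $1 \leq j \leq \ell - 2$ and both $\{w, a_j\}, \{v, a_{j+1}\} \in E(G \setminus F) \setminus E(\Gamma)$, the pair $\{\{w, a_j\}, \{v, a_{j+1}\}\}$ is a BP: adding the first edge rotates $P_{v,w}$ to a longest path with endpoint $a_{j+1}$, and adding the second closes a cycle on $V(P)$, which is Hamiltonian when $V(P) = V(G)$ and otherwise extends by $\Gamma$-connectivity into a path strictly longer than $P$. Using $\delta(G) \geq (1/2 + \varepsilon)n$, $\Delta(F) \leq 2k-2$, and $\Delta(\Gamma) = O(\log n)$ (from property (P1)), the index sets $A_w = \{j : \{w, a_j\} \in E(G \setminus F) \setminus E(\Gamma)\}$ and $B_v = \{j : \{v, a_{j+1}\} \in E(G \setminus F) \setminus E(\Gamma)\}$ each have size at least $(1/2 + \varepsilon)n - (n - |V(P)|) - O(\log n)$, so when $|V(P)|$ is close to $n$ a pigeonhole within $V(P)$ yields $|A_w \cap B_v| \geq \Omega(\varepsilon n)$ per pair $(v, w)$. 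When $|V(P)|$ is small, each $v \in S$ has $\Omega(\varepsilon n)$ $(G\setminus F)$-neighbors in $V(G) \setminus V(P)$, and one uses the complementary extension BP $\{\{v, x\}, \{w, y\}\}$ with distinct $x, y \in V(G)\setminus V(P)$; an intermediate rotate-then-extend hybrid $\{\{w, a_j\}, \{a_{j+1}, y\}\}$ with $y \notin V(P)$ bridges the middle range. Summing over $(v, w) \in S \times S_v$, of which there are at least $(n/8 + 1)^2$, and dividing by an $O(1)$ overcount gives $\Omega(\varepsilon n^3)$ distinct candidate BPs.

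For the per-edge multiplicity, each edge $e = \{a, b\}$ appears in a rotation-closure BP only as $\{w, a_j\}$ or as $\{v, a_{j+1}\}$; in each role, the BP is determined by the choice of $v \in S$ for which $a$ or $b$ lies in $S_v$ (for the rotation role) or, symmetrically, by the choice of $w \in S_v$ with $a$ or $b \in S$ (for the closing role), and the path position of the other endpoint of $e$ is then forced. Each such role therefore contributes at most $|S| \leq n/8 + 1$ BPs, summing to at most $n/2 + O(1)$ across the four possibilities. A final greedy or probabilistic pruning (include each candidate BP with probability $q \asymp n/(2L)$ where $L$ is the observed per-edge load, then verify Chernoff concentration) trims this to $\leq n/2$ while retaining at least $\varepsilon n^3/800$ BPs for small enough $\varepsilon$. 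The main obstacle is the intermediate $|V(P)|$ regime, where neither pure rotation-closure nor pure extension alone provides $\Omega(\varepsilon n)$ candidates per pair; the hybrid rotate-then-extend construction together with a careful accounting of overcounts across the three constructions is needed to meet both the candidate lower bound and the per-edge bound with the stated constants.
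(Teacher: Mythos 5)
Your overall strategy is the paper's: use P\'osa rotations to produce $\Omega(n^2)$ endpoint pairs $(v,w)$ of longest paths on $V(P)$, for each pair harvest $\Omega(\varepsilon n)$ booster pairs either by ``crossing'' pairs $\{\{w,a_j\},\{v,a_{j+1}\}\}$ that close a cycle on $V(P)$ or by extension pairs leaving $V(P)$, then control the per-edge multiplicity via the role of $S$ and $T_s$ and discard the $o(n^3)$ pairs meeting $E(F)\cup E(\Gamma)$. The constructions, the factor-$8$ overcount, and the final bookkeeping all match.

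There is, however, a genuine gap, which you yourself flag: you key the case analysis on $|V(P)|$, lower-bounding the number of neighbours of $w$ on $P$ by $(\frac12+\varepsilon)n-(n-|V(P)|)$ and the number off $P$ by nothing when $|V(P)|>(\frac12+\varepsilon)n$. This leaves an uncovered middle range (e.g.\ $|V(P)|\approx 0.8n$, where an endpoint may have all its $G$-neighbours on $P$ yet too few for the pigeonhole), and the ``rotate-then-extend hybrid'' you propose does not close it: it requires the internal vertices $a_{j+1}$ to have neighbours off $P$, which nothing guarantees in that range. The fix is to condition not on $|V(P)|$ but on each endpoint's own neighbour count: for every endpoint, either it has at least $(\frac12+\frac{\varepsilon}{3})n$ neighbours on $P$ or it has at least $\frac{2}{3}\varepsilon n$ neighbours off $P$, since its degree in $G$ is at least $(\frac12+\varepsilon)n$. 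If both endpoints are of the first type, the pigeonhole inside $V(P)$ (of size at most $n$) gives $\frac{2}{3}\varepsilon n$ crossing pairs regardless of $|V(P)|$; otherwise one endpoint supplies $\frac{2}{3}\varepsilon n$ extension pairs. This dichotomy is exhaustive, so no third construction is needed. Two minor further points: the lemma does not grant you $\Delta(\Gamma)=O(\log n)$ (property (P1) is not a hypothesis here), so exclude $E(\Gamma)$-edges by deleting, at the end, all pairs containing an edge of $E(\Gamma)\cup E(F)$ --- costing only $O(n^2\log n)$ pairs by the multiplicity bound --- rather than per endpoint; and no pruning step is needed for the $n/2$ bound, since each edge at $s$ enters at most one pair per $t\in T_s$ and vice versa.
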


\begin{proof}
We will use the fact that $\Gamma$ is expanding, and that $\delta (G) \geq \left( \frac{1}{2} + \varepsilon \right) n$, to build required \emph{BP} set as follows.\\
Let $P$ be a longest path in $\Gamma$, and let $S\subseteq V(G)$ be a set of $\frac{n}{8}$ starting vertices of paths obtained from $P$ by a sequence of rotations (such a set exists, by \cite{POS}).\\
For $s\in S$, denote by $P_s$ the path obtained from $P$ by rotations which caused $s$ to be added to $S$, and by $T_s$ a set of $\frac{n}{8}$ possible end vertices to paths obtained from $P_s$ by a sequence of rotations with $s$ fixed as the starting vertex.\\
Let $s\in S,t\in T_s$. For each such pair, we will add a set of \emph{BP}'s to $B$ (possibly some are already members of $B$). Consider two cases:
\begin{enumerate}
	\item The number of neighbours of $s$, and of $t$, in $G$ that are in $P$ is at least $\left( \frac{1}{2} + \frac{1}{3}\varepsilon \right) n$:\\
	By the pigeon--hole principle
	$$
	\left| \{ u\in P_s:\ (s,u),(p(u),t)\in E(G\setminus F) \} \right| \geq \frac{2}{3}\varepsilon n .
	$$
	Add the set of such edge pairs to $B$. Here, $p(u)$ denotes the predecessor of $u$ along $P_s$, as defined in Section \ref{sec-per}.
	
	\item The number of neighbours of either $s$ or $t$ in the graph $G\setminus F$ that are in $P$ is less than $\left( \frac{1}{2}+\frac{1}{3}\varepsilon \right) n$:\\
	In this case $s$ or $t$ has at least $\frac{2}{3} \varepsilon n$ edges going outside $P$. Let $e_1,...,e_{\frac{2}{3}\varepsilon n}$ be some subset of them, and let $f_1,...,f_{\frac{2}{3}\epsilon n}$ be some subset of edges touching the other end of the path. For every $i=1,...,\frac{2}{3} \varepsilon n$, add the pair $\{e_i,f_i\}$ to $B$
	
\end{enumerate}

\noindent In each of the two cases, for each $s\in S,t\in T_s$ we added at least $\frac{2}{3}\varepsilon n$ pairs of edges to $B$. Every pair was examined at most eight times, since each pair is considered only when one vertex in one edge of the pair corresponds to $s$, and one vertex of the other edge corresponds to $t$. So in total at least $\frac{1}{8}\cdot \sum _{s\in S}|T_s|\cdot \frac{2}{3}\varepsilon n = \frac{\varepsilon}{768}n^3$ distinct pairs were added. In addition, any edge incident to $s\in S$ was included in at most one pair for each $t\in T_s$ for every time it was considered, and vice versa, so overall every edge is a member of at most $\frac{n}{2}$ pairs in $B$.\\
Finally, remove from $B$ all the edge pairs that contain an edge of $E(F)$ or of $E(\Gamma )$. Since $|E(F)|\leq (2k-2)n, |E(\Gamma )| \leq 6\varepsilon ^2 n\log n$, and since every edge is a member of at most $\frac{n}{2}$ \emph{BP}'s in $B$, at most $O(n^2 \log n) = o(n^3)$ edge pairs were removed. The remaining pairs in $B$ are \emph{BP}'s with respect to $\Gamma$ which are edges of $G\setminus F$, and after the removal we remain with $|B| \geq \frac{\varepsilon}{800}n^3$, as required.
\end{proof}

\noindent We now aim to show that $G_p$ is likely to contain a booster pair. Let $X=\left( V,E\right)$ be a graph, such that
\begin{itemize}
	\item $|V|= |E(G)| \leq \binom{n}{2}$;
	\item $\Delta (X) \leq \frac{n}{2}$;
	\item $|E(X)| \geq \frac{\varepsilon}{800} \cdot n^3$. 
\end{itemize}
Let $S\subseteq V$. We say that a vertex $x\in V\setminus \left( S \cup N_X(S) \right)$ is \emph{$S$--useful} if $\left| N_X(x) \setminus N_X(S) \right| \geq \frac{\varepsilon}{4000}\cdot n$.

\begin{lemma} \label{lemmaUse}
Let $S\subseteq V$ be a subset such that $|S| \leq \frac{\varepsilon}{4000}\cdot n^2\ , \left| N_X(S) \right| \leq \frac{\varepsilon}{2000}\cdot n^2$, then the set $A\subseteq V(X)\setminus S$ of all $S$--useful vertices has cardinality at least $ \frac{\varepsilon}{4000}\cdot n^2$.
\end{lemma}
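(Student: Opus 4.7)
The plan is a double-counting argument: I will sum the quantity $|N_X(x)\setminus N_X(S)|$ over all candidate vertices $x\in V\setminus(S\cup N_X(S))$, bound this sum from below via a global edge count, and then conclude by averaging that many summands must be large.

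Set $T:=S\cup N_X(S)$ and $U:=V\setminus T$. The hypotheses immediately give $|T|\le |S|+|N_X(S)|\le \frac{\varepsilon}{4000}n^2+\frac{\varepsilon}{2000}n^2=\frac{3\varepsilon}{4000}n^2$. The crucial structural observation is that for any $x\in U$, the condition $x\notin N_X(S)$ (combined with $x\notin S$) means that $x$ has no neighbour in $S$, so $N_X(x)\cap S=\emptyset$. Consequently
\[
N_X(x)\setminus N_X(S)\ =\ N_X(x)\cap \bigl(V\setminus N_X(S)\bigr)\ =\ N_X(x)\cap U,
\]
since $V\setminus N_X(S)=U\sqcup S$ and $x$ has no $S$-neighbour. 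In particular $A=\{x\in U:|N_X(x)\cap U|\ge\frac{\varepsilon}{4000}n\}$.

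Next I will lower-bound $e_X(U)$. Every edge not spanned by $U$ has at least one endpoint in $T$, so
\[
|E(X)|-e_X(U)\ \le\ \sum_{v\in T}d_X(v)\ \le\ |T|\cdot\Delta(X)\ \le\ \tfrac{3\varepsilon}{4000}n^2\cdot\tfrac{n}{2}\ =\ \tfrac{3\varepsilon}{8000}n^3.
\]
Using $|E(X)|\ge\frac{\varepsilon}{800}n^3=\frac{10\varepsilon}{8000}n^3$ this yields $e_X(U)\ge\frac{7\varepsilon}{8000}n^3$, and therefore
\[
\sum_{x\in U}|N_X(x)\setminus N_X(S)|\ =\ \sum_{x\in U}|N_X(x)\cap U|\ =\ 2e_X(U)\ \ge\ \tfrac{7\varepsilon}{4000}n^3.
\]

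Finally, suppose for contradiction that $|A|<\frac{\varepsilon}{4000}n^2$. Splitting the sum over $A$ and $U\setminus A$, using $\Delta(X)\le n/2$ on $A$ and the definition of $A$ on $U\setminus A$, and bounding $|U|\le|V|\le\binom{n}{2}\le n^2/2$, we get
\[
\sum_{x\in U}|N_X(x)\setminus N_X(S)|\ \le\ |A|\cdot\tfrac{n}{2}+|U|\cdot\tfrac{\varepsilon}{4000}n\ <\ \tfrac{\varepsilon}{8000}n^3+\tfrac{\varepsilon}{8000}n^3\ =\ \tfrac{\varepsilon}{4000}n^3,
\]
which contradicts the lower bound $\frac{7\varepsilon}{4000}n^3$. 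Hence $|A|\ge\frac{\varepsilon}{4000}n^2$, as required. The argument is entirely a bookkeeping computation; the only conceptual step is the identity $N_X(x)\setminus N_X(S)=N_X(x)\cap U$ for $x\in U$, which makes the sum expressible as $2e_X(U)$, and the main place to be careful is that the constants in the hypothesis on $|S|$, $|N_X(S)|$ and $\Delta(X)$ are chosen precisely so that the two halves of the final upper bound each fall below $\frac{\varepsilon}{8000}n^3$.
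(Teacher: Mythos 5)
Your proof is correct, and it is essentially the paper's argument: both lower-bound the number of edges inside $U=V\setminus(S\cup N_X(S))$ by subtracting the at most $|S\cup N_X(S)|\cdot\Delta(X)$ edges meeting $S\cup N_X(S)$, and upper-bound the same quantity by splitting the degree sum over $A$ and $U\setminus A$. The only cosmetic differences are that you phrase the comparison via the identity $\sum_{x\in U}|N_X(x)\setminus N_X(S)|=2e_X(U)$ and argue by contradiction, whereas the paper bounds $e_X(U)$ directly and solves for $|A|$.
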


\begin{proof}
Since $|S| \leq \frac{\varepsilon}{4000}\cdot n^2\ , \left| N_X(S) \right| \leq \frac{\varepsilon}{2000}\cdot n^2$ and $\Delta (X)\leq \frac{n}{2}$, there are at most $\frac{3\varepsilon}{8000}\cdot n^3$ edges of $E(X)$ with at least one end in $S\cup N_X(S)$. So
$$e_X\left( V\setminus \left(S\cup N_X(S) \right) \right) \geq |E(X)|- \frac{3\varepsilon}{8000}\cdot n^3 \geq \frac{7\varepsilon}{8000} \cdot n^3.$$
On the other hand, 
$$e_X\left( V\setminus \left(S\cup N_X(S) \right) \right) \leq \Delta (X)\cdot |A| + \frac{\varepsilon}{4000}\cdot n\cdot \left| V\setminus (S \cup N_X(S) \cup A) \right| \leq \frac{n}{2}\cdot |A| + \frac{\varepsilon}{4000}\cdot \binom{n}{2} n.$$
Putting the two inequalities together we get the desired bound 
$$ |A| \geq \frac{2}{n} \left( \frac{7\varepsilon}{8000} \cdot n^3 - \frac{\varepsilon}{4000}\cdot \binom{n}{2} n \right) \geq \frac{\varepsilon}{4000}\cdot n^2.$$
\end{proof}

\noindent Let $X^p$ be the probability space of induced subgraphs of $X$ obtained by retaining each vertex of $X$ with probability $p$, or losing it (and all its edges) with probability $1-p$, independently of all other vertices.

\begin{lemma} \label{lemmaSpans}
Let $\frac{\log n}{n}\leq p \leq \frac{2\log n}{n}$. Then $Pr\left[ X^{p} \mbox{ spans no edge} \right] \leq 2\exp \left( -\frac{\varepsilon}{8100}\cdot n^2p \right)$.
\end{lemma}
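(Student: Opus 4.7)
The plan is to expose $X^p$ by an adaptive vertex-by-vertex scheme and to split into two cases according to how the exposure terminates. Set $\alpha_1 = \varepsilon/4000$, $\alpha_2 = \varepsilon/2000$, $q = \alpha_1 n$ and $T = \alpha_1 n^2$. Starting from $S_0 = R_0 = \emptyset$, at step $i$, provided $|N_X(S_{i-1})| \leq \alpha_2 n^2$ and $i \leq T$, Lemma \ref{lemmaUse} yields at least $\alpha_1 n^2 = T$ $S_{i-1}$-useful vertices (its hypotheses hold since also $|S_{i-1}| \leq i-1 < T = \alpha_1 n^2$), and since $|R_{i-1}| < T$ we may pick one such useful $v_i$ outside $R_{i-1}$. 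We expose whether $v_i \in X^p$ (an independent Bernoulli$(p)$ bit), add $v_i$ to $S_i$ if so, and set $R_i := R_{i-1} \cup \{v_i\}$. The process halts at step $i = T$, or earlier if $|N_X(S_{i-1})| > \alpha_2 n^2$.

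The key structural observation is that the fresh-neighbor sets $F_i := N_X(v_i)\setminus N_X(S_{i-1})$, for $v_i \in S$, are pairwise disjoint and each of size at least $q$. Indeed, $S_{j-1}$-usefulness of any later-added $v_j$ forces $v_j \notin N_X(v_i)$ (because $v_i \in S_{j-1}$), which in turn pushes $F_j$ outside $N_X(v_i) \supseteq F_i$. Hence $|N_X(S_i)| \geq q \cdot |S_i|$. Case A: suppose the process halts at some step $T^* < T$ because $|N_X(S_{T^*-1})|$ first exceeds $\alpha_2 n^2$. Then $|N_X(S) \setminus R| \geq \alpha_2 n^2 - T = (\alpha_2 - \alpha_1)n^2 = \varepsilon n^2/4000$, and every vertex in this set must avoid $X^p$ for $X^p$ to span no edge; these Bernoulli$(p)$ statuses are independent of the exposed history, so the conditional probability is at most $(1-p)^{\varepsilon n^2/4000} \leq \exp(-\varepsilon n^2 p / 4000)$.

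Case B: suppose the process completes all $T$ steps. Then $|N_X(S_T)| \leq \alpha_2 n^2$, which combined with $|N_X(S_T)| \geq q \cdot |S_T|$ forces $|S_T| \leq \alpha_2 n^2 / q = 2n$. But $|S_T| \sim Bin(T,p)$, since the selection of $v_i$ is determined by the exposures revealed at earlier steps and each new exposure is an independent Bernoulli$(p)$; moreover $Tp = \alpha_1 n^2 p \gg 2n$ in the stated range $p \geq \log n / n$, so the multiplicative Chernoff bound (Lemma \ref{chernoff}) gives $Pr[\mbox{Case B}] \leq Pr[Bin(T,p) \leq 2n] \leq \exp(-\varepsilon n^2 p / 8000)$. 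Summing the two contributions, $Pr[X^p \mbox{ spans no edge}] \leq \exp(-\varepsilon n^2 p/4000) + \exp(-\varepsilon n^2 p/8000) \leq 2\exp(-\varepsilon n^2 p/8100)$, as required. The main delicate point will be the bookkeeping for the adaptive exposure: carefully verifying the disjointness $F_i \cap F_j = \emptyset$ so that $|N_X(S)|$ indeed grows linearly in $|S|$, and confirming that the pool of useful vertices outside $R$ is never exhausted before the intended stopping time.
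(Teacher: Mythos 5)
Your proposal is correct and follows essentially the same route as the paper: an adaptive vertex-exposure process that repeatedly samples $S$-useful vertices (supplied by Lemma \ref{lemmaUse}), a Chernoff bound showing that with $T=\frac{\varepsilon}{4000}n^2$ samples the number of successes — and hence $|N_X(S)|$ — is unlikely to stay small, and a final independent exposure of the $\geq \frac{\varepsilon}{4000}n^2$ unexposed vertices of $N_X(S)$; even the constants coincide. Your explicit disjointness argument for the fresh-neighbour sets $F_i$ is just a more careful write-up of the paper's observation that each successful useful vertex adds at least $\frac{\varepsilon}{4000}n$ new neighbours.
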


\begin{proof}
Initialize $ S = \emptyset$. We will sample vertices of $V$ one after another in an adaptive order, and add successful vertices of $X^{p}$ to $S$. At each step, we will sample a vertex which we have not sampled previously and which is $S$--useful. We will do this until $\left| N_X(S) \right| \geq \frac{\varepsilon}{2000}\cdot n^2$, until no unsampled $S$--useful vertices remain, or until we have sampled $\frac{\varepsilon}{4000}\cdot n^2$ vertices (whichever comes first).
\begin{clm}
The probability that we stopped the process with $\left| N_X(S) \right| < \frac{\varepsilon}{2000}\cdot n^2$ is at most $\exp \left( -\frac{\varepsilon}{8100}\cdot n^2p \right)$.
\end{clm}

\begin{proof}
By Lemma \ref{lemmaUse}, if $V$ contains no unsampled $S$--useful vertices, where $\left| N_X(S) \right| < \frac{\varepsilon}{2000}\cdot n^2$, then at least $ \frac{\varepsilon}{4000}\cdot n^2$ vertices were sampled, which means that we stopped the process after sampling exactly $\frac{\varepsilon}{4000}\cdot n^2$ vertices. Since each successful $S$--useful vertex adds at least $ \frac{\varepsilon}{4000}\cdot n $ vertices to $N_X(S)$, among these samples there were at most $2n$ successes.\\
The probability that this occurs is at most $Pr \left[ Bin\left( \frac{\varepsilon}{4000}\cdot n^2,p \right) < 2n \right]$, and using Chernoff's bound for binomial tails the claim follows.
\end{proof}

\noindent To complete the proof, observe that the probability that $X^{p}$ spans no edge is at most the probability that either we failed at sampling $V$ and finding the desired set $S$, or $N_X(S)\cap X^{p} = \emptyset$.\\
In the process of creating $S$ we sampled at most $\frac{\varepsilon}{4000}\cdot n^2$ vertices, and upon ending it successfully we are left with $\left| N_X(S) \right| \geq \frac{\varepsilon}{2000}\cdot n^2$, which means that there are at least $\frac{\varepsilon}{4000}\cdot n^2$ unsampled vertices in $N_X(S)$. So
$$
Pr\left[ N_X(S)\cap X^{p} = \emptyset\ |\mbox{ sampling succeeded} \right] \leq Pr \left[ Bin\left( \frac{\varepsilon}{4000}\cdot n^2,p \right) =0 \right] \leq \exp \left( -\frac{\varepsilon}{8100}\cdot n^2p \right).
$$
Putting the two bounds together we get
\begin{eqnarray*}
Pr\left[ X^{p} \mbox{ contains no edge} \right] & \leq & Pr\left[ \mbox{sampling failed} \right]+Pr\left[ N_X(S)\cap X^{p} = \emptyset\ |\mbox{ sampling succeeded} \right]\\
& \leq & 2\exp \left( -\frac{\varepsilon}{8100}\cdot n^2p \right).
\end{eqnarray*}
\end{proof}

\begin{corol}
Let $\frac{\log n}{n}\leq p \leq \frac{2\log n}{n}$, let $\Gamma \subseteq G$ be a connected, non--Hamiltonian $\left( \frac{n}{8},2 \right)$--expander with at most $6\varepsilon ^2 n\log n$ edges, and let $F\subseteq G$ with $\Delta (F) \leq 2k-2$. Then the probability that $G_{p}\setminus F$ contains no booster pair with respect to $\Gamma$ is at most $2\exp \left( -\frac{\varepsilon}{8100}\cdot n^2p \right)$.
\end{corol}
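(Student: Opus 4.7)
The plan is to reduce the statement to a direct application of Lemma \ref{lemmaSpans}, using Lemma \ref{lammaBPs} to build the auxiliary graph required by that lemma. The key observation is that Lemma \ref{lemmaSpans} was set up precisely so that its hypotheses match the output of Lemma \ref{lammaBPs}.

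First, I would invoke Lemma \ref{lammaBPs} to obtain a set $B\subseteq \binom{E(G\setminus F)}{2}$ of booster pairs with respect to $\Gamma$, of size $|B|\geq \frac{\varepsilon}{800}n^3$, such that every edge of $G\setminus F$ lies in at most $n/2$ pairs of $B$. Then I construct an auxiliary graph $X$ whose vertex set is $E(G\setminus F)$ and whose edge set is $B$. The three numeric hypotheses of Lemma \ref{lemmaSpans} are then immediate: $|V(X)| = |E(G\setminus F)| \leq |E(G)| \leq \binom{n}{2}$, $\Delta(X) \leq n/2$ by the second bullet of Lemma \ref{lammaBPs}, and $|E(X)| = |B| \geq \frac{\varepsilon}{800}n^3$ by the first.

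Next, I would identify the random object. Independently retaining each edge of $G$ with probability $p$ (to form $G_p$) and then discarding the edges of $F$ is the same as independently retaining each element of $V(X) = E(G\setminus F)$ with probability $p$; this is exactly the distribution $X^p$ from Lemma \ref{lemmaSpans}. An edge of $X^p$ is, by construction, a pair of edges of $G\setminus F$ forming a booster pair with respect to $\Gamma$ that both survive in $G_p$. Hence the event ``$G_p\setminus F$ contains no booster pair with respect to $\Gamma$'' is contained in the event ``$X^p$ spans no edge.''

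Finally, applying Lemma \ref{lemmaSpans} to $X$ (the range $\frac{\log n}{n}\leq p \leq \frac{2\log n}{n}$ is exactly the one assumed in the corollary) bounds the latter probability by $2\exp\!\left(-\frac{\varepsilon}{8100}n^2p\right)$, which yields the corollary. There is essentially no obstacle here: all the analytic work was done in Lemma \ref{lammaBPs} (producing the linear-in-$n^3$ supply of booster pairs with bounded ``edge-multiplicity'') and Lemma \ref{lemmaSpans} (showing that any graph with those three numerical parameters has a retained edge with overwhelming probability). The only point requiring care is the correct identification of $X^p$ with the restriction of $G_p$ to $E(G\setminus F)$, so that the conclusion about a spanned edge translates into the existence of a booster pair.
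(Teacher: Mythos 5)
Your proposal is correct and follows essentially the same route as the paper: invoke Lemma \ref{lammaBPs} to get the set $B$, encode it as the edge set of an auxiliary graph on the edge set of $G$ (the paper takes $V(X_\Gamma)=E(G)$ rather than $E(G\setminus F)$, an immaterial difference), identify $X^p$ with the restriction of $G_p$ to $E(G\setminus F)$, and apply Lemma \ref{lemmaSpans}. No issues.
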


\begin{proof}
Let $B$ be a set of booster pairs with respect to $\Gamma$ in $G\setminus F$ such that:
\begin{itemize}
	\item $|B| \geq \frac{\varepsilon}{800}\cdot n^3$;
	\item Every edge of $E(G)$ is a member of at most $\frac{n}{2}$ booster pairs in $B$.
\end{itemize}
By Lemma \ref{lammaBPs}, such $B$ exists.
Construct an auxiliary graph $X_{\Gamma}$ as follows:\\
First define $V(X_{\Gamma}) := E(G)$. For each $e,f\in E(G \setminus F)$: add $(e,f)$ to $E(X_{\Gamma})$ if $\{e,f\} \in B$.\\
By the definition of $B$, $X_{\Gamma}$ is a graph fulfilling the requirements of Lemma \ref{lemmaUse}. So $$Pr\left[ X_{\Gamma}^{p} \mbox{ contains no edge} \right] \leq 2\exp \left( -\frac{\varepsilon}{8100}\cdot n^2p \right).$$ Since an edge of $X_{\Gamma}^p$ corresponds to a booster pair in $G_p \setminus F$, the corollary follows. \\
\end{proof}

\begin{corol}
Let $\frac{\log n}{n}\leq p \leq \frac{2\log n}{n}$. Then the following holds with probability $1-n^{-\omega (1)}$:\\
For every subgraph $\Gamma $ of $G_{p}$ which is a connected, non--Hamiltonian $\left( \frac{n}{8},2\right)$--expander with at most $6\varepsilon ^2 n\log n$ edges, and for every subgraph $F\subseteq G_p$ such that $\Delta (F) \leq 2k-2$, $G_{p} \setminus F$ contains a booster pair with respect to $\Gamma$.
\end{corol}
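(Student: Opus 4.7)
The plan is a union bound over all candidate pairs $(\Gamma, F)$ of subgraphs of $G$, combining the per--pair bound from the previous corollary with the probability that $\Gamma \cup F \subseteq G_p$, and using that $\varepsilon$ is small enough to absorb the entropy of the family of expanders.

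Fix subgraphs $\Gamma, F$ of $G$ with $|E(\Gamma)| \leq 6\varepsilon^2 n \log n$ and $\Delta(F) \leq 2k-2$. Then $\Pr[\Gamma \cup F \subseteq G_p] = p^{|E(\Gamma \cup F)|} \leq p^{|E(\Gamma)|}$. By Lemma \ref{lammaBPs}, every booster pair with respect to $\Gamma$ in $G \setminus F$ consists of edges in $E(G) \setminus (E(\Gamma) \cup E(F))$; hence by the independence of edges in $G_p$, the event ``$G_p \setminus F$ contains no booster pair with respect to $\Gamma$'' is independent of $\{\Gamma \cup F \subseteq G_p\}$, so the conditional probability coincides with the unconditional one, bounded by the previous corollary by $2\exp\!\left(-\tfrac{\varepsilon}{8100}\,n^2 p\right)$. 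The per--pair bound is therefore
$$ \Pr\bigl[\Gamma, F \subseteq G_p \text{ and no BP in } G_p \setminus F\bigr] \;\leq\; p^{|E(\Gamma)|} \cdot 2 \exp\!\left(-\tfrac{\varepsilon}{8100}\,n^2 p\right). $$

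Summing over $\Gamma$, using $p \leq 2\log n / n$ and $\binom{|E(G)|}{\ell} \leq \left(en^2/(2\ell)\right)^\ell$:
$$ \sum_{\Gamma} p^{|E(\Gamma)|} \;\leq\; \sum_{\ell=0}^{6\varepsilon^2 n \log n} \binom{\binom{n}{2}}{\ell} p^\ell \;\leq\; \sum_{\ell=0}^{6\varepsilon^2 n \log n} \left(\frac{e n \log n}{\ell}\right)^\ell . $$
Since $\ell \mapsto \ell \log(en\log n / \ell)$ is increasing on $[0, n \log n]$ and our range is contained in this interval (for $\varepsilon$ small), the maximum is attained at $\ell = 6\varepsilon^2 n \log n$, giving a bound of $\exp\!\left(C(\varepsilon) \cdot n \log n\right)$ with $C(\varepsilon) = 6\varepsilon^2 \log(e/6\varepsilon^2) = O(\varepsilon^2 \log(1/\varepsilon))$. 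The sum over $F$ is handled by splitting $F = (F \cap \Gamma) \cup (F \setminus \Gamma)$: the first piece contributes at most $2^{|E(\Gamma)|} \leq \exp(O(\varepsilon^2 n \log n))$ choices, and since $\Delta(F) \leq 2k-2$ forces $|E(F \setminus \Gamma)| \leq (k-1)n \ll n \log n$, a similar binomial computation bounds the weighted sum over the second piece by $\exp(O_k(n \log \log n))$. Altogether, the union bound yields
$$ \Pr[\exists\text{ bad }(\Gamma, F)] \;\leq\; 2\exp\!\left((C'(\varepsilon) - \tfrac{\varepsilon}{8100}) \cdot n \log n\right), $$
with $C'(\varepsilon) = O(\varepsilon^2 \log(1/\varepsilon))$. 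For $\varepsilon$ sufficiently small (per the standing assumption in this section), $C'(\varepsilon) < \varepsilon/8100$, and the bound becomes $\exp(-\Omega(\varepsilon n \log n)) = n^{-\omega(1)}$.

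The main obstacle is the balance between the entropy of expander candidates and the booster--pair failure probability. The raw number of subgraphs $\Gamma$ with up to $6\varepsilon^2 n \log n$ edges is $\exp(\Theta(\varepsilon^2 n \log^2 n))$, which overwhelms the per--pair failure probability $\exp(-\Theta(\varepsilon n \log n))$; so a naive union bound fails. Including the probabilistic weight $p^{|E(\Gamma)|}$ trades one $\log n$ factor for a $\log(1/\varepsilon)$ factor in the entropy, and it is precisely the smallness of $\varepsilon$ that then tips the balance in our favour.
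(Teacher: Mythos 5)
Your proposal is correct and follows essentially the same route as the paper: a union bound over all candidate pairs $(\Gamma,F)\subseteq G$, weighting each pair by $p^{|E(\Gamma)|+|E(F)|}$ (the probability the pair appears in $G_p$), multiplying by the per-pair bound $2\exp\left(-\tfrac{\varepsilon}{8100}n^2p\right)$ from the preceding corollary, and observing that the resulting entropy term $\exp\left(O(\varepsilon^2\log(1/\varepsilon))\,n\log n\right)\cdot e^{O_k(n\log\log n)}$ is beaten by $\exp\left(-\tfrac{\varepsilon}{8100}n\log n\right)$ once $\varepsilon^2\log(1/\varepsilon)\ll\varepsilon$. Your explicit justification of the independence between $\{\Gamma\cup F\subseteq G_p\}$ and the no-booster-pair event (via the booster pairs avoiding $E(\Gamma)\cup E(F)$) is a point the paper leaves implicit, but the argument is the same.
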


\begin{proof}
Let $A_{F,\Gamma}$ be the event that $G_p \setminus F$ does not contain a booster pair with respect to $\Gamma$. Using the union bound, we bound the probability that there are $F,\Gamma \subseteq G_p$ for which $A_{F,\Gamma}$ holds.

	\begin{eqnarray*}
		Pr(\exists F,\Gamma \subseteq G_p \ s.t.\ A_{F,\Gamma}) & \leq &  \sum\limits _{i=1}^{6\varepsilon ^2 n\log n} \sum\limits_{j=0}^{(k-1)n} \binom{|E(G)|}{i} \binom{|E(G)|}{j} p^{i+j} \cdot 2\exp \left( -\frac{\varepsilon}{8100}\cdot n^2p \right)\\
		& \leq & \sum\limits _{i=1}^{6\varepsilon ^2 n\log n} \sum\limits_{j=0}^{(k-1)n} {\left( \frac{en^2p}{2i} \right)}^i {\left( \frac{en^2p}{2j} \right)} ^{j} \cdot 2\exp \left( -\frac{\varepsilon}{8100}\cdot n^2p \right)\\
		& \leq & e^{O(n \log \log n)}\cdot {\left( \frac{e}{6\varepsilon ^2} \right)}^{6\varepsilon ^2 n\log n} \cdot 2\exp \left( -\frac{\varepsilon}{8100} \cdot n\log n \right)\\
		& = & n^{-\omega (1)}\ ,
	\end{eqnarray*}
	
\noindent where the last equality holds when $\varepsilon$ is small enough such that $\varepsilon ^2 \cdot \log \left( \frac{1}{\varepsilon} \right) \ll \varepsilon$.
\end{proof}

With Corollary 2 we can now complete the proof. Let $A_{m}$ be the event that $G_m $ contains a sparse expander $\Gamma$ and a subgraph $F$ with maximum degree at most $2k-2$, such that $G_m \setminus F$ does not contain a booster pair with respect to it $\Gamma$. Then by Lemma \ref{lem-asym}
$$
Pr\left[ A_{\tau _{2k}} \right] \leq o(1)+\sum _{m=m_1}^{m_2} Pr\left[ A_{m} \right] \leq o(1)+(m_2-m_1)\cdot 3 \sqrt{|E(G)|} \cdot n^{-\omega (1)} = o(1).
$$

\section{$(\alpha ,\beta )$-dense graphs} \label{sec-abdense}

In this section we provide a proof of Theorem \ref{thmJumbled}. Throughout the section, $G$ is some fixed graph assumed to be $( \alpha , \beta )$--dense (see Def. \ref{def-abdense}).

We take a similar approach to the proof of Theorem \ref{thmDirac}, showing that with high probability for every $F\subseteq G_{\tau _{2k}}$ with $\Delta (F) \leq 2k-2$, the graph $G_{\tau _{2k}} \setminus F$ is Hamiltonian, by proving two main lemmas:

\begin{lemma} \label{lemma3}
With high probability for every $F\subseteq G_{\tau _{2k}}$ with $\Delta (F) \leq 2k-2$, $G_{\tau _{2k}} \setminus F$ contains a subgraph $\Gamma _0$ with at most $7\beta ^2 n\log n$ edges, which has the following properties:
\begin{itemize}
	\item $\Gamma _0$ is connected;
	\item $\Gamma _0$ is an $\left( \frac{\alpha}{3}\cdot n ,2\right)$--expander;
	\item For every two disjoint subsets $U,W\subseteq V(G)$ of size $|U|,|W|\geq \alpha n$, there is a matching in $\Gamma _0$ between $U$ and $W$ of size at least $\frac{\alpha ^3}{100}\beta n$.
\end{itemize}
\end{lemma}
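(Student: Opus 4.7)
\textbf{Proof proposal for Lemma~\ref{lemma3}.} The plan is to follow the structure of Lemma~\ref{lemma1}, modifying it to the $(\alpha ,\beta )$--dense setting. The first step is to verify that, with high probability, $G_{\tau _{2k}}$ satisfies a collection of typical properties analogous to (P1)--(P6) in Section~\ref{sec-superdirac}. Setting $d_0 := C_1\beta ^2\log n$ and $\mathrm{SMALL} := \{v\in V(G) :\ d_{G_{\tau _{2k}}}(v) \leq d_0\}$ for a suitable constant $C_1$, the relevant properties include a maximum--degree bound $\Delta (G_{\tau _{2k}}) \leq 10\log n$; the bound $|\mathrm{SMALL}| \leq n^{1/2}$ with pairwise distance $>4$ among SMALL vertices; sparseness of internal edges within intermediate--size subsets; expansion and connectivity properties analogous to (P5)--(P6); and a \emph{density property}: for every two disjoint $U, W$ with $|U|, |W| \geq \alpha n(1 - \tfrac{\alpha ^2\beta}{100})$, $e_{G_{\tau _{2k}}}(U, W) \geq \tfrac{\beta \alpha ^2 n\log n}{4}$. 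Each is derived via Lemma~\ref{lem-asym}, Chernoff bounds, and a union bound, with the edge lower bounds in $G$ now coming from $(\alpha ,\beta )$--density (applied to size--$\alpha n$ supersets of the relevant subsets) and from $\delta (G) \geq (2\alpha +\beta )n$ in place of the super-Dirac hypothesis.

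I then construct $\Gamma _0$ as the union of two independent sparse random subgraphs of $G_{\tau _{2k}}\setminus E(F)$. The first, $\Gamma _{\mathrm{exp}}$, is obtained by the vertex--wise sampling of Lemma~\ref{lemma1.1}, retaining a uniformly random $E_v\subseteq E_{G_{\tau _{2k}}}(v)$ of size $3\beta ^2\log n$ for each non--SMALL $v$ and all of $E_{G_{\tau _{2k}}}(v)\setminus E(F)$ for $v\in \mathrm{SMALL}$. The second, $\Gamma _{\mathrm{match}}$, is obtained by Bernoulli sparsification keeping each edge of $G_{\tau _{2k}}$ independently with probability $p' := 3\beta ^2/5$. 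Setting $\Gamma _0 := (\Gamma _{\mathrm{exp}} \cup \Gamma _{\mathrm{match}})\setminus E(F)$ (plus at most one extra edge to ensure connectivity), the total edge count is at most $6\beta ^2 n\log n + 1 \leq 7\beta ^2 n\log n$ with high probability. The $(\tfrac{\alpha }{3}n, 2)$--expansion and connectivity properties of $\Gamma _0$ follow from $\Gamma _{\mathrm{exp}}$ by essentially verbatim adaptation of the case analyses in Lemmas~\ref{lemma1.1} and~\ref{lemma1.2}. For the matching property, fix disjoint $U, W$ of size $\geq \alpha n$ and set $m := \tfrac{\alpha ^3\beta}{100}n$. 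By K\"onig's theorem, a matching of size $\geq m$ in $\Gamma _0[U, W]$ exists iff every pair $U''\subseteq U, W''\subseteq W$ with $|U''|, |W''| \geq \alpha n - m$ spans a $\Gamma _0$--edge. Applying the density property and Chernoff's bound to $\Gamma _{\mathrm{match}}$, the probability that $e_{\Gamma _{\mathrm{match}}}(U'', W'') < \tfrac{p'\beta \alpha ^2 n\log n}{8}$ is at most $\exp(-\Omega(\beta ^3\alpha ^2 n\log n))$, which dominates the $2^{O(n)}$ union bound over pairs $(U'', W'')$; since this count far exceeds the $(2k-2)\min(|U''|,|W''|) = O(n)$ edges of $F$ that could lie between $U''$ and $W''$, at least one $\Gamma _{\mathrm{match}}$--edge survives the removal of $F$, yielding the required matching.

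The main obstacle is precisely the matching property. A pure vertex--wise sampling (as in Lemma~\ref{lemma1.1}) would only yield concentration of order $\exp(-\Omega(n))$ per pair $(U'', W'')$, which is insufficient to beat the $2^{O(n)}$ union bound over subset pairs when $\alpha, \beta$ are small. Bypassing this bottleneck by introducing an independent Bernoulli sparsification is the essential new ingredient: its i.i.d.\ structure delivers the stronger $\exp(-\Omega(n\log n))$ concentration needed for the union bound, while the edge budget of $7\beta ^2 n\log n$ is generous enough to accommodate both sampling schemes within a single subgraph $\Gamma _0$.
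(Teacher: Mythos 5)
Your proposal follows essentially the same route as the paper: a vertex--wise sampling (as in Lemma \ref{lemma1.1}) produces the connected expander, an independent Bernoulli sparsification produces the matching property via K\"onig duality plus a Chernoff/union bound over subset pairs (exploiting the $\Omega (\beta \alpha ^2 n\log n)$ edges guaranteed between large sets), and $\Gamma _0$ is taken to be the union of the two subgraphs --- this is exactly the paper's decomposition into Lemmas \ref{lemma3.1}--\ref{lemma3.3}. The only quibbles are bookkeeping: sparsifying all of $G_{\tau _{2k}}$ (which can have roughly $\frac{n\log n}{2(2\alpha +\beta )}$ edges) with $p'=3\beta ^2/5$ can overshoot the $7\beta ^2 n\log n$ budget when $\alpha$ is small --- the paper instead sparsifies $G_{m_1}$, with $m_1\leq \frac{1}{4}n\log n$, at rate $\beta ^2$ --- and an $\left( \frac{\alpha}{3}n,2\right)$--expander may need up to $\alpha ^{-1}-1$ (not one) extra edges to become connected.
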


\begin{lemma} \label{lemma4}
With high probability for every $F\subseteq G_{\tau _{2k}}$ with $\Delta (F) \leq 2k-2$ and for every subgraph $\Gamma $ of $G_{\tau _{2k}} \setminus F$ with at most $8\beta ^2 n\log n$ edges, such that 
\begin{itemize}
	\item $\Gamma$ is connected;
	\item $\Gamma$ is an $\left( \frac{\alpha}{3}\cdot n ,2\right)$--expander;
	\item For every two disjoint subsets $U,W\subseteq V(G)$ of size $|U|,|W|\geq \alpha n$, there is a matching in $\Gamma$ between $U$ and $W$ of size at least $\frac{\alpha ^3}{100}\beta n$;
\end{itemize}
the graph $G_{\tau _{2k}}\setminus F$ contains a booster pair with respect to $\Gamma$.
\end{lemma}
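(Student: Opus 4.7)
The plan is to follow the two--step outline of the proof of Lemma \ref{lemma2}. First I would prove the analog of Lemma \ref{lammaBPs}: for every $(\Gamma, F)$ satisfying the hypotheses, $G \setminus F$ contains a set $B$ of at least $c_0 n^3$ booster pairs with respect to $\Gamma$ (for some constant $c_0 = c_0(\alpha, \beta) > 0$) in which each edge of $G \setminus F$ appears in at most $n/2$ pairs. Second, carry over Lemmas \ref{lemmaUse}--\ref{lemmaSpans} and the subsequent corollaries essentially verbatim: the auxiliary graph $X_\Gamma$ on vertex set $E(G)$ whose edges encode the pairs in $B$ satisfies the hypotheses of Lemma \ref{lemmaSpans}, so the random sampling argument shows that $G_p \setminus F$ (with $p = \Theta(\log n / n)$) retains a pair from $B$ except with probability $n^{-\omega(1)}$; a union bound over $F$ (at most $(k-1)n$ edges) and $\Gamma$ (at most $8\beta^2 n \log n$ edges), transferred back via Lemma \ref{lem-asym}, completes the argument.

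For the BP--counting step, let $P$ be a longest path in $\Gamma$; since $\Gamma$ is an $(\alpha n/3, 2)$--expander, P\'osa's rotation lemma yields a set $S$ of $\geq \alpha n/3 + 1$ starting vertices of rotated paths $P_s$ and, for each $s \in S$, a set $T_s$ of $\geq \alpha n/3 + 1$ endpoints of further rotations $P_{s,t}$ from $s$ to $t$, all with vertex set $Q := V(P)$; write $R := V(G) \setminus Q$. For each of the $\Omega(\alpha^2 n^2)$ pairs $(s, t)$ I aim to construct $\Omega(n)$ booster pairs via two complementary schemes: \emph{extension pairs} $\{(s, r), (t, z)\}$ with $r \in R$ and $(s, r) \in E(G \setminus F)$, in which $(s, r)$ alone extends $P_{s,t}$ to a longer path; and \emph{closure pairs} $\{(s, v_i), (v_{i-1}, t)\}$ with $v_i \in N_{G \setminus F}(s)$ and $v_{i-1} \in N_{G \setminus F}(t)$ the predecessor of $v_i$ along $P_{s,t}$, whose addition creates a Hamilton cycle on $Q$ and hence, by connectedness of $\Gamma$, a path longer than $P$ whenever $|Q| < n$.

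The main obstacle is guaranteeing $\Omega(n)$ booster pairs per pair $(s, t)$ in the absence of a super--Dirac--style minimum degree; this is where the matching property of $\Gamma$ engineered by Lemma \ref{lemma3} becomes essential. Since $P$ is longest, $N_\Gamma(t) \cap R = \emptyset$ for every $t \in T^* := \bigcup_{s \in S} T_s$ (otherwise $P_{s,t}$ extends), so if both $|R| \geq \alpha n$ and $|T^*| \geq \alpha n$ held simultaneously, the matching property would supply a $\Gamma$--edge between $T^*$ and $R$ contradicting the maximality of $P$. Hence $\min(|R|, |T^*|) < \alpha n$, and combining this dichotomy with the $(\alpha, \beta)$--density of $G$ between large subsets and the minimum degree $\delta(G) \geq (2\alpha + \beta) n$ one can execute the two schemes: closure pairs predominate in the regime $|R| < \alpha n$, where $Q$ is nearly spanning and most of the $G\setminus F$--neighborhoods of $s$ and $t$ concentrate inside $Q$, so pigeonholing produces $\Omega(n)$ valid indices $i$; extension pairs can be produced in the complementary regime by using that $e_G(S', R)$ is large (via the density of $G$) for an appropriate enlargement $S' \supseteq S$ of size $\alpha n$, transferring this average lower bound into per--pair BP counts after a pruning step. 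The multiplicity bound of $n/2$ per edge follows since an edge incident to some $s \in S$ appears in at most $|T_s|$ pairs, and symmetrically at $t$.
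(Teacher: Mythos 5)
Your high--level architecture (a deterministic BP--counting lemma, the auxiliary graph $X_\Gamma$, Lemma \ref{lemmaSpans}, and a union bound over $F$ and $\Gamma$ transferred via Lemma \ref{lem-asym}) matches the paper, and you correctly identify the crux: with $\delta(G)\geq(2\alpha+\beta)n$ and $\alpha<1/4$ the super--Dirac pigeonhole is unavailable. But your proposed resolution does not actually resolve it. Your ``closure pairs'' $\{(s,v_i),(v_{i-1},t)\}$ require an index $i$ with $v_i\in N(s)$ and $v_{i-1}\in N(t)$ \emph{simultaneously}, and the pigeonhole that produces $\Omega(n)$ such indices needs $|N(s)\cap Q|+|N(t)\cap Q|>|Q|$. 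In your main regime ($|R|<\alpha n$, so $|Q|>(1-\alpha)n$) this sum is only guaranteed to be about $(4\alpha+2\beta)n$, which is far below $|Q|$ when $\alpha$ is small; there need not be a single valid index $i$, so the closure scheme can produce zero booster pairs. The matching property of $\Gamma$, which you invoke only for the dichotomy $\min(|R|,|T^*|)<\alpha n$, is not used where it is actually needed. (That dichotomy is also of limited value, since $|T^*|$ is only guaranteed to be $\geq\alpha n/3$, not $\geq\alpha n$.)

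The missing idea, which is the heart of the paper's Lemma \ref{lemmaBPs4}, is to use the matching inside the path rather than between endpoints and $R$: when $|N_G(s)\cap P|,|N_G(t)\cap P|\geq(2\alpha+\tfrac{1}{2}\beta)n$, one picks a pivot $c$ on $P_{s,t}$ splitting $\alpha n$ neighbours of $s$ before $c$ from $\alpha n$ neighbours of $t$ after $c$, forms the disjoint sets $L$ (predecessors of those $s$--neighbours) and $R$ (successors of those $t$--neighbours), each of size $\geq\alpha n$, and applies the matching hypothesis to get $\tfrac{\alpha^3}{100}\beta n$ edges $(l,r)$ of $\Gamma$ between them. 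Each such \emph{pre-existing} $\Gamma$--edge bridges a far--apart $s$--neighbour $s(l)$ and $t$--neighbour $p(r)$, so that the two new edges $(s,s(l))$ and $(p(r),t)$ together with the $\Gamma$--edge $(l,r)$ close a cycle on $V(P)$. This replaces the failed adjacency requirement $v_{i-1}\in N(t),\,v_i\in N(s)$ and is what makes the count $\Omega(\beta n)$ per pair $(s,t)$ possible. The complementary case (a vertex with fewer than $(2\alpha+\tfrac{1}{2}\beta)n$ neighbours on $P$ has $\geq\tfrac{1}{2}\beta n$ neighbours off $P$, each giving an extension edge) is a clean per--endpoint dichotomy and does not need your global $e_G(S',R)$ averaging argument. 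Without the bridging construction your proof has a genuine gap in the main case.
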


Theorem \ref{thmJumbled} is obtained from Lemma \ref{lemma3} and Lemma \ref{lemma4} in the same way Theorem \ref{thmDirac} was obtained in the previous section.

Throughout the proofs we will assume that $\beta$ is sufficiently small relative to $\alpha$ without stating so explicitly. We also assume that $\alpha < \frac{1}{4}$, since the complementing case when $\delta (G) \geq \left( \frac{1}{2} + \beta \right)n$ is already covered by Theorem \ref{thmDirac}.

\subsection{Proof of Lemma \ref{lemma3}}

\noindent For a graph $\Gamma$ on $n$ vertices, let $$d_0:=5\beta ^2 \log n,$$ and $$SMALL(\Gamma ):=\lbrace v\in V(\Gamma ): d_{\Gamma}(v)\leq d_0 \rbrace.$$ We will first show that WHP $G_{\tau _{2k}}$ has the following properties:
\begin{itemize}
	\item[(Q1)] $\Delta \left( G_{\tau _{2k}} \right) \leq 10\alpha ^{-1}\log n$;
	\item[(Q2)] $\left| SMALL \left(G_{\tau _{2k}} \right) \right| \leq n^{0.1}$;
	\item[(Q3)] $\forall u,v\in SMALL \left(G_{\tau _{2k}} \right) : dist_{G_{\tau _{2k}}}(u,v) > 4$;
	\item[(Q4)] $\forall U\subset V(G)\ s.t.\ 4\beta ^2\log n\leq |U| \leq \frac{5n}{\sqrt{\log n}}:e_{G_{\tau _{2k}}}(U)<2\beta ^2|U|\log n$;
	\item[(Q5)] $\forall U,W\subset V(G)\ disjoint\ s.t.\ \frac{n}{\sqrt{\log n}} \leq |U| \leq \frac{\alpha}{3}\cdot n,|W|=2|U|:e_{G_{\tau _{2k}}}(U,V(G)\setminus (U\cup W)) \geq 0.4\alpha |U|\log n$;
	\item[(Q6)] $\forall U\subset V(G)\ s.t.\ \alpha n \leq |U|\leq  \frac{1}{2} n: e_{G_{\tau _{2k}} }\left( U, V(G)\setminus U \right) > n\sqrt{\log n}$;
	\item[(Q7)] $\forall U,W\subset V(G)\ disjoint\ s.t.\ |U|,|W| \geq \alpha n : e_{G_{\tau _{2k}} }\left( U, W \right) > \frac{\alpha ^2}{3} \beta n \log n$.
\end{itemize}

\begin{proof} For each property, we will bound the probability of $G_{\tau _{2k}}$ failing to have it separately.

Let $N = |E(G)|$, and let
$$p_1 = \frac{\log n}{2n},\ p_2 = \frac{\log n}{\alpha n},\ m_1 = N\cdot p_1,\ m_2 = N\cdot p_2.$$
Let $p^*$ be the unique solution in the interval $[0,1]$ to the equation $$ \sum _{v\in V(G)} (1-x)^{d_G(v)} = 1,$$
and let $$p^*_1 = p^*-\frac{\sqrt{\log n}}{n},\ p^*_2 = p^*+\frac{\sqrt{\log n}}{n},\ m^*_1 = N\cdot p^*_1,\ m^*_2 = N\cdot p^*_2.$$
Observe that

\begin{eqnarray*}
	\sum _{v\in V(G)} (1-p^*_1)^{d_G(v)} & = & \sum _{v\in V(G)} (1-p^*)^{d_G(v)} \cdot \left( \frac{1-p^*_1}{1-p^*} \right) ^{d_G(v)} \\
	& \leq & \sum _{v\in V(G)} (1-p^*)^{d_G(v)} \cdot \left( 1 + \frac{p^*-p^*_1}{1-p^*} \right) ^{d_G(v)} \\
	& \leq & \sum _{v\in V(G)} (1-p^*)^{d_G(v)} \cdot \left( 1 + \frac{2\sqrt{\log n}}{n} \right) ^{n} \\
	& \leq & \exp \left( 2\sqrt {\log n} \right) ,
\end{eqnarray*}
and similarly $\sum _{v\in V(G)} (1-p^*_2)^{d_G(v)} \geq \exp \left( -\alpha \sqrt {\log n} \right) .$

\begin{lemma} \label{lemmaTAUjumb}
It holds that $p_1 \leq p^*_1 \leq p^* \leq p^*_2 \leq p_2$. Furthermore, with high probability $m^*_1 \leq \tau _{2k} \leq m^*_2$.
\end{lemma}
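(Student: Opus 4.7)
The inequality chain $p_1 \le p^*_1 \le p^* \le p^*_2 \le p_2$ is deterministic. The middle inequalities $p^*_1 \le p^* \le p^*_2$ hold by definition, so the plan is to establish $p_1 \le p^*_1$ and $p^*_2 \le p_2$. I would use the fact that $\phi(x) := \sum_v (1-x)^{d_G(v)}$ is strictly decreasing on $[0,1]$ with $\phi(p^*) = 1$. For $p_1 \le p^*_1$ it suffices to show $\phi(p_1 + \sqrt{\log n}/n) \ge 1$, via the bound $(1-x)^m \ge e^{-xm/(1-x)}$ with $m = d_G(v) \le n-1$; the resulting sum is of order $n^{1/2 - o(1)}$. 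For $p^*_2 \le p_2$ it suffices to show $\phi(p_2 - \sqrt{\log n}/n) \le 1$, via $(1-x)^m \le e^{-xm}$ with $m = d_G(v) \ge (2\alpha + \beta)n$ (from Definition \ref{def-abdense}); the resulting sum is of order $n^{-1-\beta/\alpha}e^{O(\sqrt{\log n})} = o(1)$.

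For the hitting-time statement, since $\delta \ge 2k$ is a monotone increasing property, Lemma \ref{lem-asym} reduces the bounds $\tau_{2k} \le m^*_2$ and $\tau_{2k} \ge m^*_1$ to showing $\Pr[\delta(G_{p^*_2}) < 2k] = o(1)$ and $\Pr[\delta(G_{p^*_1}) \ge 2k] = o(1)$ respectively. For the former I would refine the factoring already appearing in the excerpt, writing $(1-p^*_2)^{d_G(v)} = (1-p^*)^{d_G(v)}\bigl(1 - \tfrac{p^*_2-p^*}{1-p^*}\bigr)^{d_G(v)}$ and invoking $d_G(v) \ge (2\alpha+\beta)n$ together with $\sum_v (1-p^*)^{d_G(v)} = 1$ to obtain the \emph{upper} bound $\sum_v (1-p^*_2)^{d_G(v)} \le e^{-(2\alpha+\beta)\sqrt{\log n}(1-o(1))}$. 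A union bound over $v$ with the standard estimate $\Pr[\mathrm{Bin}(d_G(v), p^*_2) < 2k] \le (1+o(1))(np^*_2)^{2k-1}(1-p^*_2)^{d_G(v)}/(2k-1)!$ and $np^*_2 = O(\log n)$ then yields $\Pr[\delta(G_{p^*_2}) < 2k] \le (\log n)^{O(1)}e^{-\Omega(\sqrt{\log n})} = o(1)$.

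For the other direction I would apply the second moment method to $X_0$, the number of isolated vertices of $G_{p^*_1}$, noting $\{\delta(G_{p^*_1}) \ge 2k\} \subseteq \{X_0 = 0\}$. The same factoring now produces the \emph{lower} bound $E[X_0] = \sum_v (1-p^*_1)^{d_G(v)} \ge e^{(2\alpha+\beta)\sqrt{\log n}(1-o(1))}$. The pair probability $\Pr[u,v\text{ both isolated}]$ equals $(1-p^*_1)^{d_G(u)+d_G(v)}$ when $uv \notin E(G)$ and $(1-p^*_1)^{d_G(u)+d_G(v)-1}$ otherwise, so the only nontrivial correction in $\mathrm{Var}(X_0)$ is $\tfrac{p^*_1}{1-p^*_1}\sum_{uv \in E(G)}\Pr[u\text{ iso}]\Pr[v\text{ iso}]$. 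I would bound this by $\tfrac{n}{2}\max_v\Pr[v\text{ iso}] \cdot E[X_0] \cdot O(\log n /n)$, and use $\max_v\Pr[v\text{ iso}] \le (1-p^*_1)^{(2\alpha+\beta)n} \le n^{-(2\alpha+\beta)/2}$ to make this term $o(E[X_0]^2)$; Chebyshev's inequality then gives $\Pr[X_0 = 0] = o(1)$.

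The main obstacle is this second-moment bound: since $E[X_0]$ grows only sub-polynomially in $n$, like $e^{\Theta(\sqrt{\log n})}$, the edge-pair covariances cannot be allowed much slack, and the hypothesis $\delta(G) \ge (2\alpha + \beta)n$ must be invoked on both sides --- to make $E[X_0]$ large enough to beat Chebyshev, and to make $\max_v\Pr[v\text{ iso}]$ polynomially small in $n$ so that the edge correction is negligible.
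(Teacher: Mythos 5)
Your proposal is correct and follows essentially the same route as the paper: monotonicity of $\sum_v (1-x)^{d_G(v)}$ for the deterministic chain of inequalities, Lemma \ref{lem-asym} to pass to the $G_p$ models, a first-moment/union bound at $p^*_2$, and a second-moment/Chebyshev argument at $p^*_1$, with the hypothesis $\delta(G)\ge(2\alpha+\beta)n$ entering exactly where you say it must. The one minor (and in fact simplifying) deviation is that you apply the second moment method to the number of isolated vertices rather than to the number of vertices of degree below $2k$ as the paper does; this is valid since $\{\delta(G_{p^*_1})\ge 2k\}\subseteq\{X_0=0\}$, and it makes the adjacent-pair covariance exactly $\frac{p^*_1}{1-p^*_1}\Pr[u\ \mathrm{iso}]\Pr[v\ \mathrm{iso}]$ rather than a conditional probability that has to be estimated.
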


\begin{proof}
Since the expression $\sum _{v\in V(G)} (1-x)^{d_G(v)}$ is decreasing in the interval $[0,1]$, in order to show that $p_1 \leq p^*_1 \leq p^*_2 \leq p_2$ it suffices to show that $$ \sum _{v\in V(G)} \left( 1-p_1 - \frac{\sqrt{\log n}}{n} \right)^{d_G(v)} \geq 1, \quad \sum _{v\in V(G)} \left( 1-p_2 + \frac{\sqrt{\log n}}{n} \right)^{d_G(v)} \leq 1.$$
We bound both expressions:
	$$
	\sum _{v\in V(G)} \left( 1-p_1 - \frac{\sqrt{\log n}}{n} \right)^{d_G(v)} \geq n\cdot \left( 1-\frac{3\log n}{4n} \right) ^n = \omega (1),
	$$
	$$
	\sum _{v\in V(G)} \left( 1-p_2 + \frac{\sqrt{\log n}}{n} \right)^{d_G(v)} \leq n\cdot \left( 1-\frac{\log n}{2\alpha n} \right) ^{(2\alpha + \beta )n} = o(1),
	$$
as desired.
	
Next, in order to show that with high probability $m^*_1 \leq \tau _{2k} \leq m^*_2$, it suffices to show that
\begin{enumerate}
	\item $Pr\left[ \delta \left( G_{p^*_1} \right) \geq 2k \right] = o(1)$;
	\item $Pr\left[ \delta \left( G_{p^*_2} \right) < 2k \right] = o(1)$.
\end{enumerate}

We will prove both bounds:

\begin{enumerate}
	\item We will bound the probability using Chebyshev's inequality. Let $X$ be the random variable counting the number of vertices in $V(G)$ with degree less than $2k$ in $G_{p^*_1}$. Recall that $p^*$ is such that $\sum _{v\in V(G)} \left( 1-p^* \right) ^{d_G(v)} = 1$, and that $p_1 \leq p^*_1 \leq p^*$. First, bound $\mathbb{E}[X]$ from below:
	
	\begin{eqnarray*}
	\mathbb{E}[X] & = & \sum _{v\in V(G)} Pr\left[ d_{G_{p^*_1}}(v) < 2k \right] = \sum _{v\in V(G)} \sum\limits_{i=0}^{2k-1} \binom{d_G(v)}{i}{p^*_1}^i(1-p^*_1)^{d_G(v)-i} \\
	& \geq & \sum _{v\in V(G)}  \left( \frac{\alpha np^*_1}{2k}\right) ^{2k-1}\cdot (1-p^*_1)^{d_G(v)} \geq \left( \frac{\alpha np_1}{2k}\right) ^{2k-1} = \omega (1).
	\end{eqnarray*}
	
	Bound $Var[X]$ from above:
	\begin{eqnarray*}
	Var[X] & = & \mathbb{E}\left[ X^2 \right] - \mathbb{E}\left[ X \right] ^2 \\
	& = & \sum _{u\neq v \in V(G)} Pr\left[ d_{G_{p^*_1}}(v) , d_{G_{p^*_1}}(u) < 2k \right] - Pr\left[ d_{G_{p^*_1}}(v) < 2k \right] \cdot Pr\left[  d_{G_{p^*_1}}(u) < 2k \right] \\
	& &  + \sum _{v\in V(G)} Pr\left[ d_{G_{p^*_1}}(v) < 2k \right] - \left( Pr\left[ d_{G_{p^*_1}}(v) < 2k \right] \right) ^2 .\\
	\end{eqnarray*}
	
	We separate the terms $u\neq v\in V(G)$ for which $(u,v)\in E(G)$ from the terms with $(u,v)\notin E(G)$, and observe that in the latter case the events $d_{G_{p^*_1}}(u)<2k$ and $d_{G_{p^*_1}}(v)<2k$ are independent.
	\begin{eqnarray*}
	& \leq & \mathbb{E}[X] +  \sum _{(u, v) \in E(G)} Pr\left[ d_{G_{p^*_1}}(v) , d_{G_{p^*_1}}(u) < 2k \right] - Pr\left[ d_{G_{p^*_1}}(v) < 2k \right] \cdot Pr\left[  d_{G_{p^*_1}}(u) < 2k \right] \\
	& \leq & \mathbb{E}[X] + \sum _{(u, v) \in E(G)} p^*_1\cdot Pr\left[ d_{G_{p^*_1}}(v) , d_{G_{p^*_1}}(u) < 2k | (u,v)\in E\left( G_{p^*_1} \right) \right] \\
	& \leq & \mathbb{E}[X] + \sum _{(u, v) \in E(G)} p_2\cdot \sum _{i=0}^{2k-2} \sum _{j=0}^{2k-2}\binom{n}{i}\binom{n}{j}{p_2}^{i+j} (1-p^*_1) ^{d_G(v)+d_G(u) -i -j -2} \\
	& \leq & \mathbb{E}[X] + \sum _{(u, v) \in E(G)} p_2\cdot 4k^2 \left( \frac{e\log n}{\alpha \cdot (2k-2)(1-p_2)} \right) ^{4k-4} (1-p^*_1) ^{d_G(v)+d_G(u) -2} \\
	& = & \mathbb{E}[X] + O\left( p_2 \cdot \log ^{4k-4} n \cdot \exp \left( 4 \sqrt{\log n} \right) \right) \cdot \sum _{(u, v) \in E(G)} (1-p^*) ^{d_G(v)+d_G(u)} \\
	& \le &  \mathbb{E}[X] + o(1)\cdot \left( \sum _{v\in V(G)} (1-p^*) ^{d_G(v)} \right) ^2 \\
	& = & \mathbb{E}[X] + o(1).
	\end{eqnarray*}
	
	Now, $\frac{Var[X]}{\mathbb{E}[X]^2} = O\left( \mathbb{E}[X] ^{-1} \right) = o (1),$ and by Chebyshev's inequality we get $$Pr\left[ \delta \left( G_{p^*_1} \right) \geq 2k \right] = Pr[X=0] = o(1).$$
	
	\item Let $v\in V(G)$. The probability that $d_{G_{p^*_2}}(v) < 2k$ is at most
	\begin{eqnarray*}
	Pr\left[ d_{G_{p^*_2}}(v) <2k \right] & = & \sum\limits_{i=0}^{2k-1} \binom{d_G(v)}{i}{p^*_2}^i(1-p^*_2)^{d_G(v)-i} \\
	& \leq & 2k\cdot \left( \frac{enp^*_2}{(2k-1)(1-p^*_2)} \right) ^{2k-1} \cdot (1-p^*_2)^{d_G(v)} \\
	& \leq & O\left( \log ^{2k} n \right) \cdot \exp \left( -\alpha \sqrt{\log n} \right) \cdot (1-p^*)^{d_G(v)} \\
	& = & o\left( (1-p^*)^{d_G(v)} \right) .
	\end{eqnarray*}
	By the union bound we get
	$$Pr\left[ \delta \left( G_{p^*_2} \right) < 2k \right] \leq \sum _{v\in V(G)} Pr\left[ d_{G_{p^*_2}}(v) <2k \right] = o\left( \sum _{v\in V(G)} (1-p^*)^{d_G(v)} \right) = o(1).$$
\end{enumerate}

\end{proof}

\noindent Now we show that with high probability $G_{\tau _{2k}}$ has properties $(Q1)-(Q6)$. With Lemma \ref{lemmaTAUjumb}, for a monotone increasing property $(Q)$ it suffices to show that $Pr\left( G_{p_1} \notin (Q) \right) = o(1)$ or that $Pr\left( G_{p^*_1} \notin (Q) \right) = o(1)$, and similarly for decreasing properties and $G_{p_2},\ G_{p^*_2}$.

We omit the calculations for properties $(Q1),(Q4)$, since they are almost identical to the calculations for their counterpart properties $(P1),(P4)$ in the proof of Lemma \ref{lemma1}.

\begin{itemize}
	
	\item[(Q2)] Use Markov's inequality:
	
	\begin{eqnarray*}
		Pr\left[ G_{p^*_1} \notin \bf{(Q2)} \right]  & \leq & \frac{\mathbb{E}\left[ |SMALL \left(G_{p^*_1} \right) | \right] }{n^{0.1}} \\
		& = & n^{-0.1} \sum _{v\in V(G)} Pr\left[ Bin(d_G(v),p^*_1) \leq 5\beta ^2\log n \right] \\
		& \leq & n^{-0.1}\cdot 5\beta ^2\log n \cdot \left( \frac{enp^*_1}{5\beta ^2\log n} \right) ^{5\beta ^2\log n} \cdot \sum _{v\in V(G)} (1-p^*_1)^{d_G(v)} \\
		& \leq & \exp \left( \left( -0.1 + 5\beta ^2 \log \left( \frac{e}{5\alpha \beta ^2} \right) \right) \log n +O\left( \log \log n + \sqrt{\log n} \right) \right) \\
		& = & o(1).
	\end{eqnarray*}
	
	\item[(Q3)] As $m^*_1 \leq \tau _{2k} \leq m^*_2$ with high probability, it is sufficient to prove that with high probability $G_{m^*_2}$ does not contain a path of length at most 4 between two (not necessarily distinct) members of $SMALL\left( G_{m^*_1} \right)$.\\ 
	We will prove this by showing that:
	\begin{enumerate}
		\item With high probability $G_{m^*_1}$ does not contain a path of length at most 4 between two (not necessarily distinct) members of $SMALL\left( G_{m^*_1} \right)$;
		\item With high probability, adding $m^*_2-m^*_1 \leq n \sqrt{\log n}$ random edges of $G\setminus G_{m^*_1}$ to $G_{m^*_1}$ does not result in a path between two members of $SMALL\left( G_{m^*_1} \right)$.
	\end{enumerate}
	
	The probability that a specific path $P$ of length $\ell$ is in $G_{m^*_1}$ is at most
	$$
	Pr \left[ P \in G_{m^*_1} \right] \leq \frac{ \binom{N-\ell}{Np_2-\ell} }{ \binom{N}{Np_2} } \leq \left( \frac{Np_2}{N} \right) ^{\ell} = \left( \frac{\log n}{\alpha n} \right) ^{\ell}.
	$$
	Now bound the probability that the two endpoints of a path $P$, denoted as $s,t$, are members of $SMALL\left( G_{m^*_1} \right)$, conditioned on the event $P \in G_{m^*_1}$. This probability is at most the probability that the vertex set $\{ s,t\}$ has at most $d_1 := 2\cdot d_0 -2$ edges between it and $V(G)\setminus \{s,t\}$ in $G_{m^*_1}$, not including the two edges belonging to $P$.
	
	\begin{eqnarray*}
		Pr\left[ s,t\in SMALL\left( G_{m^*_1}  \right) | P\in G_{m^*_1} \right] & \leq & \sum\limits_{i=0}^{d_1} \binom{2n-4}{i}\cdot \frac{ \binom{N-\ell -(d_G(s)+d_G(t) -2 )}{m_1 -\ell -i} }{\binom{N-\ell}{m_1-\ell}} \\
		& \leq & (d_1+1)\binom{2n}{d_1}\cdot \frac{ \binom{N-\ell -(d_G(s)+d_G(t) -2 )}{m_1 -\ell -d_1} }{\binom{N-\ell}{m_1-\ell}} \\
		& \leq & d_1\left( \frac{2en}{d_1} \right)^{d_1}\cdot \left( \frac{m_1-\ell}{N-\ell} \right)^{d_1}\\
		& & \cdot \left( 1-\frac{m^*_1-\ell -d_1}{N-\ell -d_1} \right) ^{d_G(s) + d_G(t) -2- d_1} \\
		& \leq & \left( \frac{2}{\beta ^2\alpha} \right)^{-10\beta ^2 \log n} \cdot (1-p^*)^{d_G(s)+d_G(t)} .
	\end{eqnarray*}
	
	Apply the union bound to bound the probability that there is a path $P\in G_{m_1}$ of length $1\leq \ell \leq 4$, such that both of its endpoints are in $SMALL\left( G_{m_1} \right)$:
	
	\begin{eqnarray*}
		Pr\left[ \exists P\in G_{m_1} :\ s,t\in SMALL\left( G_{m_1} \right) \right]  & \leq &  \sum_{s,t\in V(G)} \sum_{\ell =1}^4 n^{\ell -1} \cdot \left( \frac{\log n}{\alpha n} \right) ^{\ell} \\
		& & \cdot Pr\left[ s,t\in SMALL\left( G_{m^*_1}  \right) | P\in G_{m^*_1} \right] \\
		& \le & \sum_{s,t\in V(G)} (1-p^*)^{d_G(s)+d_G(t)} \\
		& & \cdot \exp \left( \log n \cdot \left( -1 -10\beta ^2 \log \left( \frac{2}{\beta ^2\alpha} \right) + o(1) \right) \right) \\
		& \leq & n^{-0.9} = o(1),
	\end{eqnarray*}
	
	for small enough $\beta$.
	
	Finally, we bound the probability that adding $n \sqrt{\log n}$ random edges to $G_{m^*_1}$ results in a short path between two vertices of $SMALL\left( G_{m^*_1} \right)$. Since we already proved that with high probability $G_{m^*_1} \in (Q2)$ and $G_{m^*_2} \in (Q1)$, we can assume for the sake of calculation that indeed $G_{m^*_1},G_{m^*_2}$ have these properties.\\
	For the set of $n \sqrt{\log n}$ added edges to close a path, at least one of the edges must have both its vertices inside the set of vertices at distance at most 3 from $SMALL\left( G_{m^*_1} \right)$ in $G_{m^*_2}$ . By $(Q1),(Q2)$, this set is of size at most $n^{0.1}\cdot (10\alpha ^{-1}\log n)^3 \leq n^{0.2}$. By the union bound, the probability that at least one of the added edges is in this set is at most
	$$
	n \sqrt{\log n}\cdot \frac{ \binom{n^{0.2}}{2} }{N-m^*_2} = o(1).
	$$

	\item[(Q5)] For sets $U,W\subseteq V(G)$ such that $\frac{n}{\sqrt{\log n}} \leq |U| \leq \frac{\alpha}{3}n$ and $|W|=2|U|$ we have $|U \cup W| \leq \alpha n$, so $$e_G(U,V(G)\setminus (U\cup W)) \geq |U|\cdot (\delta (G) - \alpha n) \geq \alpha |U| n.$$ So
		\begin{eqnarray*}
		Pr\left[ G_{p_1} \notin {\bf (Q5)} \right] & \leq & \sum\limits_{i=\frac{n}{\sqrt{\log n}}}^{\frac{\alpha}{3} n} \binom{n}{i} \binom{n}{2i} Pr\left[ Bin\left( \alpha i n , p_1 \right) \leq 0.4 \alpha i \log n \right] \\
		& \leq & 3^n\cdot \exp \left( -\Omega \left( n^2p_1/\sqrt{\log n} \right) \right) = o(1) .
	\end{eqnarray*}
	
	\item[(Q6)] It is easy to see that if a graph has property $(Q7)$ then it also has property $(Q6)$, and so it suffices to prove that the former happens with high probability for  $G_{\tau_{2k}}$.
	
	\item[(Q7)] Since $G$ is $(\alpha ,\beta )$--dense, if $|U|,|W| \geq \alpha n$ we have $e_G(U,W) \geq \alpha ^2 \beta n^2$. So
	\begin{eqnarray*}
		Pr\left[ G_{p_1} \notin {\bf (Q7)} \right] & \leq & 3^n \cdot Pr\left[ Bin\left( \alpha ^2 \beta n^2 , p_1 \right) \leq \frac{\alpha ^2}{3}\beta n\log n \right] \\
		& \leq & 3^n\cdot \exp \left( - \Omega \left( n \log n \right) \right) = o(1) .
	\end{eqnarray*}
\end{itemize}
\end{proof}

\begin{lemma} \label{lemma3.1}
	Let $\Gamma$ be a graph on $n$ vertices such that $\delta \left( \Gamma \right) \geq 2k$ and such that $\Gamma \in (Q1)$--$(Q5)$, and let $F\subseteq \Gamma$ with $\Delta (F) \leq 2k-2$. Then $\Gamma \setminus F$ contains a subgraph $\Gamma ^{(1)}$ which is an $\left( \frac{\alpha}{3}n,2 \right)$--expander with at most $5\beta ^2 n\log n$ edges.
\end{lemma}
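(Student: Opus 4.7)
\Proof
The construction and analysis mirror Lemma~\ref{lemma1.1}: properties $(Q1)$--$(Q5)$ play the roles of $(P1)$--$(P5)$, $\beta$ replaces $\varepsilon$, and the expansion target becomes $\alpha n/3$ in place of $n/8$. The plan is to build a random subgraph $\Gamma^{(1)} \subseteq \Gamma \setminus F$ by forcing $E_v := E_\Gamma(v)$ for each $v \in SMALL(\Gamma)$, and sampling $E_v \subseteq E_\Gamma(v)$ uniformly with $|E_v| = 5\beta^2 \log n$ for each $v \notin SMALL(\Gamma)$; then set $E(\Gamma^{(1)}) := \left(\bigcup_v E_v\right) \setminus E(F)$. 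The edge count bound $|E(\Gamma^{(1)})| \leq 5\beta^2 n \log n$ and the bound $\delta(\Gamma^{(1)}) \geq 2$ (using $\delta(\Gamma) \geq 2k$, $\Delta(F) \leq 2k-2$, and $\beta$ small enough that $5\beta^2 \log n \geq 2k$) are immediate, so it remains to show that with positive probability $\Gamma^{(1)}$ is an $(\alpha n/3, 2)$-expander.

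Fix $U \subseteq V(\Gamma)$ with $|U| \leq \alpha n/3$ and partition $U = U_1 \cup U_2$ where $U_1 := U \cap SMALL(\Gamma)$ and $n_i := |U_i|$. Property $(Q3)$ ensures that the forced neighbourhood $N_{\Gamma^{(1)}}(U_1)$ has size at least $2n_1$ and meets $U_2 \cup N_{\Gamma^{(1)}}(U_2)$ in at most $n_2$ vertices. I split on $n_2$ into three regimes, just as in Lemma~\ref{lemma1.1}.

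For small $n_2$ (say $n_2 < \beta^2 \log n$), every $v \in U_2$ has $d_{\Gamma^{(1)}}(v) \geq 5\beta^2 \log n - (2k-2) \gg n_2$ deterministically, so combining with the forced $U_1$ contribution immediately yields $|N_{\Gamma^{(1)}}(U)| \geq 2|U|$. For the medium range $\beta^2 \log n \leq n_2 \leq n/\sqrt{\log n}$, one argues deterministically that $|N_{\Gamma^{(1)}}(U_2)| \geq 4n_2$: otherwise $U_2$ together with a covering set of size $4n_2$ would span a vertex set of size $5n_2$ with too many edges for the degree sum $\sum_{u\in U_2} d_{\Gamma^{(1)}}(u) \geq (5\beta^2 - o(1)) n_2 \log n$, violating $(Q4)$ --- provided the constant there is tightened as in the proof of $(P4)$.

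The large range $n/\sqrt{\log n} \leq n_2 \leq \alpha n/3$ requires a probabilistic argument. If $|N_{\Gamma^{(1)}}(U)| < 2|U|$, there is a set $W$ of size at most $2n_2 + 3n_1 = (2+o(1))n_2$ (invoking $n_1 \leq n^{0.1}$ from $(Q2)$) for which $e_{\Gamma^{(1)}}(U_2, V(\Gamma) \setminus (U_2 \cup W)) = 0$. For each $u \in U_2$ independently, a ratio-of-binomials estimate bounds the probability that $E_u$ avoids $E_\Gamma(u, V(\Gamma) \setminus (U_2 \cup W)) \setminus E_F(u)$ by $\exp\!\left(-\tfrac{5\beta^2 \log n}{\Delta(\Gamma)} \cdot (d_\Gamma(u, V(\Gamma) \setminus (U_2 \cup W)) + 2k-2)\right)$. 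Multiplying over $u \in U_2$, invoking $(Q1)$ to get $\Delta(\Gamma) \leq 10 \alpha^{-1} \log n$ and $(Q5)$ to get $e_\Gamma(U_2, V(\Gamma) \setminus (U_2 \cup W)) \geq 0.4\alpha n_2 \log n$ (after absorbing the $O(n_1)$ discrepancy between $|W|$ and the $2n_2$ required by $(Q5)$), yields a per-configuration probability bound of $\exp(-\omega(n))$, which swamps the $3^n$ union bound over choices of $(U_2, W)$. The chief obstacle is constant bookkeeping --- in particular, tightening $(Q4)$ so the degree budget $5\beta^2 \log n$ per vertex beats the edge count allowed in Case B, and handling the minor set-size adjustment in the $(Q5)$ application --- but all such adjustments are absorbed by choosing $\beta$ sufficiently small relative to $\alpha$, as already assumed.
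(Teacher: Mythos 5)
Your proposal is correct and follows exactly the route the paper takes: the paper's proof of this lemma is literally "essentially identical to the proof of Lemma~\ref{lemma1.1}, up to some constants being different," and your write-up is a faithful adaptation of that argument with $(Q1)$--$(Q5)$ in place of $(P1)$--$(P5)$. Your two parenthetical caveats are well taken --- the constant $2\beta^2$ in $(Q4)$ as stated is indeed too weak for the medium-range contradiction (its proof delivers the needed $0.1\beta^2$), and the $O(n_1)$ slack in $|W|$ relative to the $2|U|$ of $(Q5)$ must be absorbed as you describe --- both being instances of the "constants being different" the paper waves at.
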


\begin{proof}
The proof is essentially identical to the proof of Lemma \ref{lemma1.1}, up to some constants being different.
\end{proof}

\begin{lemma} \label{lemma3.2}
	Let $\Gamma$ be a graph on $n$ vertices such that $\delta \left( \Gamma \right) \geq 2k$ and such that $\Gamma \in (Q1)$--$(Q6)$, and let $F\subseteq \Gamma$ with $\Delta (F) \leq 2k-2$. Then $\Gamma \setminus F$ contains a subgraph $\Gamma ^{(2)}$ which is a connected $\left( \frac{\alpha}{3}n,2 \right)$--expander with at most $6\beta ^2 n\log n $ edges.
\end{lemma}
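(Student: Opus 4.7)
The plan is to take the expander $\Gamma^{(1)} \subseteq \Gamma \setminus F$ produced by Lemma \ref{lemma3.1} and connect its components by greedily adding a constant number of edges found via property $(Q6)$.

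The key observation is that every connected component of $\Gamma^{(1)}$ has size at least $\alpha n$. Indeed, if some component $C$ had $|C| \leq \alpha n/3$, then applying the $(\alpha n/3, 2)$-expansion to $U = C$ would yield $|N_{\Gamma^{(1)}}(C)| \geq 2|C| > 0$, contradicting $N_{\Gamma^{(1)}}(C) = \emptyset$. Hence $|C| > \alpha n/3$; choosing $U \subseteq C$ of size exactly $\alpha n/3$ gives $|N_{\Gamma^{(1)}}(U)| \geq 2\alpha n/3$, and since $N_{\Gamma^{(1)}}(U) \subseteq C \setminus U$, we conclude $|C| \geq \alpha n/3 + 2\alpha n/3 = \alpha n$. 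In particular, $\Gamma^{(1)}$ has at most $\lfloor 1/\alpha \rfloor$ components.

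Now iterate the following while the current graph is disconnected: pick any component $C$, and let $U$ be whichever of $C$ or $V(G) \setminus C$ has size at most $n/2$; by the previous paragraph, $\alpha n \leq |U| \leq n/2$. Property $(Q6)$ gives $e_\Gamma(U, V(G) \setminus U) > n\sqrt{\log n}$, whereas $|E(F)| \leq (k-1)n = o\left( n\sqrt{\log n} \right)$, so at least one of these edges lies in $\Gamma \setminus F$; add it. This strictly decreases the number of components and preserves the expansion property (since adding edges only enlarges neighbourhoods), and the components of the new graph are unions of old components and hence still have size at least $\alpha n$, so the process can continue.

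After at most $\lfloor 1/\alpha \rfloor - 1$ iterations we obtain a connected $(\alpha n/3, 2)$-expander $\Gamma^{(2)} \subseteq \Gamma \setminus F$ with at most $5\beta^2 n\log n + \lfloor 1/\alpha \rfloor \leq 6\beta^2 n\log n$ edges for $n$ sufficiently large. No real obstacle is expected here: the only nontrivial point is the elementary observation that the $(\alpha n/3, 2)$-expansion property promotes the trivial component-size lower bound of $\alpha n/3$ to $\alpha n$, which is exactly the threshold at which $(Q6)$ becomes applicable.
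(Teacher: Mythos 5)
Your proposal is correct and follows essentially the same route as the paper: take the expander $\Gamma^{(1)}$ from Lemma \ref{lemma3.1}, note that expansion forces every component to have size at least $\alpha n$ (hence at most $\alpha^{-1}$ components), and use $(Q6)$ together with $|E(F)|\leq (k-1)n = o(n\sqrt{\log n})$ to greedily add a bounded number of connecting edges from $\Gamma\setminus F$. Your write-up merely fills in details the paper leaves implicit (the promotion of the component-size bound from $\alpha n/3$ to $\alpha n$, and the surviving-edge count), so there is nothing to correct.
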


\begin{proof}
	This proof follows a similar line to the proof of Lemma \ref{lemma1.2}.\\
	The subgraph $\Gamma ^{(1)}$ of $\Gamma \setminus F$ is an $\left( \frac{\alpha}{3}n,2 \right)$--expander, and therefore has at most $\alpha ^{-1}$ connected components, each of size at least $\alpha n$.\\
	By $(Q6)$, if $\Gamma ^{(1)}$ is not connected, then $\Gamma \setminus F$ contains an edge between any connected component to the rest of the graph. In particular, $\Gamma ^{(2)}$ can be obtained by sequentially adding such edges to $\Gamma ^{(1)}$, if necessary, at most $\alpha ^{-1}$ times.
\end{proof}

\begin{lemma} \label{lemma3.3}
	Let $\Gamma$ be a graph on $n$ vertices such that $\delta \left( \Gamma \right) \geq 2k$, $|E(\Gamma )| = m_1$ and such that $\Gamma \in (Q1),(Q7)$, and let $F\subseteq \Gamma$ with $\Delta (F) \leq 2k-2$. Then $\Gamma \setminus F$ contains a subgraph $\Gamma ^{(3)}$ which has at most $\beta ^2 n\log n$ edges, such that for every two disjoint subsets $U,W\subseteq V(G)$ of size $|U|,|W|\geq \alpha n$, there is a matching in $\Gamma ^{(3)}$ between $U$ and $W$ of size at least $\frac{\alpha ^3}{100}\beta n$.
\end{lemma}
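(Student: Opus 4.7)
The plan is to take $\Gamma^{(3)}$ to be a random subgraph of $\Gamma\setminus F$, obtained by retaining each edge independently with probability $q=\beta^{2}$. Since $|E(\Gamma\setminus F)|\leq m_{1}\leq n\log n/4$, the expected size is at most $\beta^{2}n\log n/4$, and a Chernoff bound gives $|E(\Gamma^{(3)})|\leq \beta^{2}n\log n$ with high probability, so the edge-count requirement is met.

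For the matching property, set $s^{*}=\frac{\alpha^{3}}{100}\beta n$. By the defect form of K\"onig's theorem, a matching of size at least $s^{*}$ in $\Gamma^{(3)}[U,W]$ fails to exist iff there exist $A\subseteq U,B\subseteq W$ with $|A|+|B|>|U|+|W|-s^{*}$ and $e_{\Gamma^{(3)}}(A,B)=0$. Since $|A|\leq|U|$ and $|B|\leq|W|$, any such violating pair must satisfy $|A|,|B|\geq\alpha n-s^{*}$, which is close to $\alpha n$ by the smallness assumption on $\beta$. Hence it suffices to show that with positive probability, every pair of disjoint $A,B\subseteq V(\Gamma)$ with $|A|,|B|\geq\alpha n-s^{*}$ satisfies $e_{\Gamma^{(3)}}(A,B)\geq 1$.

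The key combinatorial input will be a strengthened version of (Q7): for any disjoint $A,B$ with $|A|,|B|\geq\alpha n-s^{*}$, $e_{\Gamma}(A,B)\geq\frac{2}{15}\alpha^{2}\beta n\log n$. To prove this, extend $A\subseteq A'$ and $B\subseteq B'$ to disjoint sets of size exactly $\alpha n$ (possible since $\alpha<1/4$), apply (Q7) to $A',B'$ to get $e_{\Gamma}(A',B')>\frac{\alpha^{2}}{3}\beta n\log n$, and subtract the contribution of the extensions, which using (Q1) is bounded by $2\cdot|A'\setminus A|\cdot\Delta(\Gamma)\leq 2\cdot\frac{\alpha^{3}\beta n}{100}\cdot\frac{10\log n}{\alpha}=\frac{\alpha^{2}\beta n\log n}{5}$. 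Removing the at most $(k-1)n$ edges of $F$ still leaves $e_{\Gamma\setminus F}(A,B)=\Omega(\alpha^{2}\beta n\log n)$ for large $n$.

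For such a fixed pair, $\Pr[e_{\Gamma^{(3)}}(A,B)=0]\leq(1-q)^{e_{\Gamma\setminus F}(A,B)}\leq\exp(-\Omega(\alpha^{2}\beta^{3}n\log n))$, and since the number of relevant pairs $(A,B)$ is at most $3^{n}$, a union bound gives failure probability $o(1)$. Combined with the high-probability edge-count bound, this shows existence of the required $\Gamma^{(3)}$. The main obstacle is that (Q7) is stated only for sets of size at least $\alpha n$, while the defect K\"onig condition demands density on slightly smaller sets of size $\alpha n-s^{*}$; the extension argument, leveraging the max-degree bound (Q1) together with the smallness of $s^{*}$ relative to $\alpha n$, is what bridges this gap.
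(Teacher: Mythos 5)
Your proposal is correct and follows essentially the same route as the paper: take a random subgraph of $\Gamma\setminus F$ keeping each edge independently with probability $\beta^2$, bound the edge count by a first--moment/Chernoff argument, and use K\"onig duality together with $(Q7)$ and the maximum--degree bound $(Q1)$ to show via a union bound over at most $3^n$ pairs that all required matchings survive. The only (cosmetic) difference is in how the K\"onig step is packaged: the paper fixes $U,W$ and sums over the $2i$ endpoints of a small vertex cover, charging the loss $2i\cdot\Delta(\Gamma)$ directly, while you pass to the complements $A,B$ of the cover and re-derive the density of $e_\Gamma(A,B)$ for sets of size $\alpha n - s^*$ by an extension argument --- both computations are the same estimate viewed from opposite sides.
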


\begin{proof}
Let $\Gamma ^{\prime} := \Gamma \setminus F$, and let $\Gamma ^{(3)}$ be a random subgraph of $\Gamma ^{\prime}$ distributed according to $\Gamma ^{\prime}_{\beta ^2}$. We will show that with positive probability $\Gamma ^{(3)}$ satisfies the desired properties, and therefore show that the desired subgraph indeed exists.\\
First, we bound the probability that for some disjoint subsets $U,W\subseteq V(G)$ the maximum matching size $\nu _{\Gamma ^{(3)}} (U, W)$ between them is at most $\frac{\alpha ^3}{100}\beta n$. Observe that for some $i \leq \frac{\alpha ^3}{100}\beta n$, if $\nu _{\Gamma ^{(3)}} (U, W) = i$ then there are some $i$ edges in $E_{\Gamma ^{(3)}}(U,W)$ such that the set of all their (exactly) $2i$ end vertices is a cover of all the edges in $E_{\Gamma ^{(3)}}(U,W)$ -- meaning that all edges of $\Gamma ^{\prime}$ that are not touching any of these $2i$ vertices are non--edges in $\Gamma ^{(3)}$. Since $\Gamma \in (Q1),(Q7)$ we get
\begin{eqnarray*}
	Pr \left[ \nu _{\Gamma ^{(3)}} (U, W) \leq \frac{\alpha ^3}{100}\beta n \right] & = & \sum\limits_{i=0}^{\frac{\alpha ^3}{100}\beta n} Pr \left[ \nu _{\Gamma ^{(3)}} (U, W) =i \right] \\
	& \leq & \sum\limits_{i=0}^{\frac{\alpha ^3}{100}\beta n} \binom{e_{\Gamma}(U,W)}{i}\beta ^{2i} \left( 1-\beta ^2 \right) ^{e_{\Gamma ^{\prime}}(U,W) - 2i\cdot \Delta (\Gamma )} \\
	& \leq & \sum\limits_{i=0}^{\frac{\alpha ^3}{100}\beta n} \binom{e_{\Gamma}(U,W)}{i} \left( 1-\beta ^2 \right) ^{e_{\Gamma}(U,W) -2(k-1)n - 2i\cdot \Delta (\Gamma )} \\
	& \leq & n \cdot \binom{\frac{\alpha ^2}{3}\beta n\log n}{\frac{\alpha ^3}{100}\beta n} \left( 1-\beta ^2 \right) ^{\frac{\alpha ^2}{3}\beta n\log n  -2kn - 2\cdot \frac{\alpha ^3}{100}\beta n \cdot \alpha ^{-1}10\log n} \\
	& \leq & \exp \left( O\left( n \log \log n \right) - \Omega \left(n \log n \right) \right)\\
	& = & \exp \left( - \Omega \left(n \log n \right) \right) .
\end{eqnarray*}
Summing over all (at most $3^n$) possible subsets $U$ and $W$ we derive that the estimated probability is of order $o(1)$.\\
To finish the proof, observe that the probability that $\Gamma ^{(3)}$ has more than $\beta ^2 n\log n$ edges, which is more than $\beta ^2 \cdot (m_1 - (k-1)n)$, is at most $$Pr \left[ Bin\left( m_1 - (k-1)n,\beta ^2 \right) > \beta ^2 n\log n \right]  \leq \frac{1}{2}.$$
So overall, the probability that $\Gamma ^{(3)}$ satisfies both conditions is at least $\frac{1}{2} - o(1) > 0$.
\end{proof}

\noindent To finish the proof of Lemma \ref{lemma3}, observe that by Lemma \ref{lemma3.2}, with high probability for every $F\subseteq G_{\tau _{2k}}$ with $\Delta (F) \leq 2k-2$, $G_{\tau _{2k}} \setminus F$ contains a subgraph $\Gamma ^{(2)}$ which is a connected $\left( \frac{\alpha}{3}n,2 \right)$--expander with at most $6\beta ^2 n\log n $ edges. Furthermore, by Lemma \ref{lemma3.3} the subgraph $G_{m_1}\setminus F$, and therefore $G_{\tau _{2k}}\setminus F$, contains a subgraph $\Gamma ^{(3)}$ which has at most $\beta ^2 n\log n$ edges, such that for every two disjoint subsets $U,W\subseteq V(G)$ of size $|U|,|W|\geq \alpha n$, there is a matching in $\Gamma ^{(3)}$ between $U$ and $W$ of size at least $\frac{\alpha ^3}{100}\beta n$.\\
Now simply set $\Gamma _3 := \Gamma ^{(2)} \cup \Gamma ^{(3)}$ to obtain the lemma.

\subsection{Proof of Lemma \ref{lemma4}}

This proof follows the same general outline as the proof of Lemma \ref{lemma2}, in the following sense:\\
First, we will show that for any subgraph $F\subseteq G$ with small maximum degree, and for any sparse, expanding subgraph $\Gamma \subseteq G$, such that large vertex subsets in $\Gamma$ have linear sized matching between them, the edge set $E(G\setminus F)$ contains a large set of booster pairs with respect to $\Gamma$, such that every edge does not participate in many pairs.\\
Then we construct an auxiliary graph $X$, such that every edge in $X$ represent a booster pair, and show that for $p_1 \leq p \leq p_2$, the probability that $X^p$ does not contain any edge is $\exp \left( -\Omega \left( \beta n^2p \right) \right)$.\\
Finally, using the union bound, summing over all possible subgraphs $F, \Gamma$, we prove the lemma.\\
Since most of the proof should be clear, given the proof of Lemma \ref{lemma2}, we will only prove the existence of a ``good" set of booster pairs, and state the lemma regarding the existence of an edge in $X^p$.

\begin{lemma} \label{lemmaBPs4}

Let $\Gamma \subseteq G$ be a non--Hamiltonian subgraph with at most $8\beta ^2 n\log n$ edges, such that 
\begin{itemize}
	\item $\Gamma$ is connected;
	\item $\Gamma$ is an $\left( \frac{\alpha}{3}\cdot n ,2\right)$--expander;
	\item For every two disjoint subsets $U,W\subseteq V(G)$ of size $|U|,|W|\geq \alpha n$, there is a matching in $\Gamma$ between $U$ and $W$ of size at least $\frac{\alpha ^3}{100}\beta n$;
\end{itemize}
and let $F\subseteq G$ with $\Delta (F) \leq 2k-2$. Then $G\setminus F$ contains a set $B\subseteq \binom{E(G)}{2}$ of booster pairs with respect to $\Gamma$, such that
\begin{itemize}
	\item $|B| \geq \frac{\alpha ^5}{8000}\beta \cdot n^3$;
	\item Every edge of $E(G\setminus F)$ is a member of at most $n$ booster pairs in $B$.
\end{itemize}
\end{lemma}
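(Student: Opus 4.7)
The plan is to adapt the structure of Lemma~\ref{lammaBPs}: use a longest path of $\Gamma$ together with Pósa rotations to produce many candidate endpoint pairs $(s,t)$, and for each such pair extract $\Omega(\alpha^3\beta n)$ booster pairs of $G\setminus F$ with respect to $\Gamma$, with the matching property of $\Gamma$ and the $(\alpha,\beta)$-density of $G$ replacing the $(\tfrac{1}{2}+\varepsilon)n$-pigeonhole that was used in the super-Dirac case.

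Concretely, let $P$ be a longest path of $\Gamma$. Since $\Gamma$ is a non-Hamiltonian $(\tfrac{\alpha}{3}n,2)$-expander, Pósa's rotation lemma produces a set $S\subseteq V(\Gamma)$ of size $\ge\tfrac{\alpha}{3}n+1$ of candidate starting endpoints (via rotations keeping the other endpoint fixed), and for each $s\in S$ a set $T_s\subseteq V(\Gamma)$ of size $\ge\tfrac{\alpha}{3}n+1$ of candidate ending endpoints of paths $P_{s,t}=(s=w_0,w_1,\ldots,w_\ell=t)$ on the vertex set $V(P)$. By the maximality of $P$, no $s\in S$ and no $t\in T_s$ has a $\Gamma$-neighbor outside $V(P)$.

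For each pair $(s,t)$ with $s\in S$, $t\in T_s$, the target is to produce at least $\tfrac{\alpha^3\beta}{100}n$ booster pairs in $E(G\setminus F)\setminus E(\Gamma)$ of the form $\{(s,w_{i+1}),(w_i,t)\}$; each such pair is indeed a BP with respect to $\Gamma$, since adding $(s,w_{i+1})$ rotates $P_{s,t}$ to a path of the same length with endpoint $w_i$ instead of $s$, and then $(w_i,t)$ closes this rotated path into a Hamilton cycle on $V(P)$, yielding either a Hamilton cycle of $\Gamma\cup\{e_1,e_2\}$ (when $V(P)=V(G)$) or, by connectivity of $\Gamma$, a path strictly longer than $P$. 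To produce the required lower bound on the per-pair count I would split into two cases based on the size of $V_0:=V(G)\setminus V(P)$. When $|V_0|<\alpha n$, the sets $A_s:=(N_G(s)\setminus N_F(s))\cap V(P)$ and $A_t:=(N_G(t)\setminus N_F(t))\cap V(P)$ both have size at least $(\alpha+\beta)n$, and applying the $(\alpha,\beta)$-density of $G$ to $A_s,A_t$ together with the matching property of $\Gamma$ (which is used to align the density along the path $P_{s,t}$) yields the desired count of indices $i$ with $w_i\in A_t$ and $w_{i+1}\in A_s$. When $|V_0|\ge\alpha n$, the endpoint-sets $S,T_s$ have no $\Gamma$-edges to $V_0$, and the $(\alpha,\beta)$-density of $G$ then guarantees many single-edge boosters of the form $(s,v)$ with $v\in V_0$, each of which pairs with an arbitrary non-edge of $\Gamma\cup F$ to form a BP.

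Summing over the $|S|\cdot|T_s|\ge\tfrac{\alpha^2}{9}n^2$ ordered pairs and accounting for the bounded overcount (each BP is attributed to $O(1)$ such pairs, as each of its two edges has endpoints playing at most a bounded number of roles in the construction), one arrives at $|B|\ge\tfrac{\alpha^5\beta}{8000}n^3$. The multiplicity bound follows by the same accounting: a given edge $(x,y)\in E(G\setminus F)$ appears in a BP only when one of $x,y$ plays the role of $s$, $w_{i+1}$, $w_i$, or $t$, and once such a role is fixed the remaining endpoint is determined up to at most $n$ choices. Finally, BPs containing an edge of $E(\Gamma)\cup E(F)$ are removed; since $|E(\Gamma)|\le 8\beta^2n\log n$ and $|E(F)|\le(k-1)n$, and each edge is in at most $n$ BPs, this discards only $O(n^2\log n)=o(n^3)$ pairs. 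The main obstacle is precisely the per-pair lower bound when $V(P)$ covers most of $V(G)$: the pigeonhole argument of Lemma~\ref{lammaBPs} is unavailable since $\delta(G)=(2\alpha+\beta)n$ can be far below $n/2$, so one must carefully combine the matching property of $\Gamma$ with the density of $G$, applied to the neighborhood sets $A_s,A_t$ along the specific path $P_{s,t}$.
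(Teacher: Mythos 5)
Your overall scaffolding (longest path, P\'osa rotations, sets $S$ and $T_s$, a per-pair count of $\tfrac{\alpha^3\beta}{100}n$, the $\times\tfrac18$ overcounting correction, and the final removal of pairs meeting $E(\Gamma)\cup E(F)$) matches the paper. But there is a genuine gap at exactly the point you flag as ``the main obstacle'': when $V(P)$ covers almost all of $V(G)$, you commit to booster pairs of the form $\lbrace (s,w_{i+1}),(w_i,t)\rbrace$ with $w_i,w_{i+1}$ \emph{consecutive} on $P_{s,t}$, and you assert that the $(\alpha,\beta)$-density of $G$ applied to $A_s,A_t$ ``together with the matching property of $\Gamma$'' yields many such indices $i$. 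It does not. The sets $A_s$ and $A_t$ have size only about $(\alpha+\beta)n$, which can be far below $n/2$, so unlike in Lemma \ref{lammaBPs} there is no pigeonhole forcing $A_t$ to intersect the predecessor-set of $A_s$; they can occupy disjoint position classes along the path, giving zero consecutive pairs. Moreover, neither a $G$-edge nor a $\Gamma$-matching edge between $A_s$ and $A_t$ says anything about adjacency of positions on $P_{s,t}$, so no amount of ``aligning the density along the path'' extracts consecutive pairs from these hypotheses.

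The paper resolves this by changing the \emph{shape} of the booster pair rather than trying to force consecutiveness. Choose a pivot vertex $c$ on $P_{s,t}$ so that at least $\alpha n+1$ neighbours of $s$ precede $c$ and at least $\alpha n+1$ neighbours of $t$ succeed $c$ (swapping the roles of $s$ and $t$ if necessary). Set $L=\lbrace p(x): x\in N_G(s)\cap P,\ x \mbox{ precedes } c\rbrace$ and $R=\lbrace s(x): x\in N_G(t)\cap P,\ x \mbox{ succeeds } c\rbrace$; these are disjoint sets of size at least $\alpha n$, so the matching hypothesis on $\Gamma$ gives $\tfrac{\alpha^3}{100}\beta n$ \emph{$\Gamma$-edges} $(l,r)$ with $l\in L$, $r\in R$. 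For each such edge the pair $\lbrace (s,s(l)),(p(r),t)\rbrace$ is a BP: the cycle $s\to s(l)\to\cdots\to p(r)\to t\to\cdots\to r\to l\to\cdots\to s$ uses the two new edges plus the $\Gamma$-edge $(l,r)$ as a third link, which is free because it already lies in $\Gamma$. This three-link closure is the idea your proposal is missing; without it the per-pair lower bound in the dense-path case does not go through. (Your second case, $|V_0|\ge\alpha n$, also needs a per-vertex rather than a set-averaged degree condition; the paper instead splits on whether $|N_G(s)\cap P|<(2\alpha+\tfrac12\beta)n$, which directly gives each such $s$ at least $\tfrac12\beta n$ neighbours off the path, each producing a path-lengthening edge.)
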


\begin{proof}
Let $P$ be a longest path in $\Gamma$, let $S$ be a set of $\frac{\alpha}{3}n$ starting vertices of paths obtained from $P$ by a sequence of rotations (such a set exists, by \cite{POS}), and for $s\in S$ let $T_s$ be a set of $\frac{\alpha}{3}n$ end vertices of paths obtained from $P_s$ by a sequence of rotations, $P_s$ being a path with starting vertex $s$ obtained from $P$ by rotations.\\
For each pair of vertices $s\in S,\ t\in T_s$ let $P_{s,t}$ be a path between $s$ and $t$ obtained from $P$ by rotations. We will add the following set of \emph{BP}'s to $B$:
\begin{enumerate}
	\item If $|N_G(s)\cap P|,|N_G(t)\cap P|\geq (2\alpha +\frac{1}{2}\beta )n$:\\
	Let $c\in P$ be a vertex such that, without loss of generality, at least $\alpha n + 1$ of $s$'s neighbours on $P$ precede $c$ along $P_{s,t}$, and at least $\alpha n +1$ of $t$'s neighbours on $P$ succeed $c$ (if no such $c$ exists, switch between $t$ and $s$). Let $L := \lbrace p(x)|x\in N_G(s) \cap P ,x \mbox{ precedes } c \rbrace$ and let $R := \lbrace s(x)|x\in N_G(t) \cap P ,x \mbox{ succeeds } c \rbrace$. By the definition of $c$, $|L|,|R| \geq \alpha n$ and the sets are disjoint, so there is a matching in $\Gamma$ between $L$ and $R$ of size $\frac{\alpha ^3}{100}\beta n$.\\
	For each edge $(l,r)$ in the matching, add to $B$ the pair $\lbrace (s,s(l)),(p(r),t) \rbrace$.
	\item If (without loss of generality) $|N_G(s)\cap P| < (2\alpha +\frac{1}{2}\beta )n$:\\
	In this case, $s$ has at least $\frac{1}{2}\beta n$ neighbours outside $P$. Let $e_1,...,e_{\frac{\alpha ^3}{100}\beta n}$ be some subset of them and let $ f_1,...,f_{\frac{\alpha ^3}{100}\beta n}$ be some subset of edges in $E(G)$ touching $t$.\\
	For every $i=1,...,\frac{\alpha ^3}{100}\beta n$, add the pair $\{ e_i,f_i \}$ to $B$.
\end{enumerate}
In each case we added $\frac{\alpha ^3}{100}\beta n$ \emph{BP}'s to $B$.  As in Lemma \ref{lammaBPs}, every pair was examined at most eight times, so in total at least $\frac{1}{8}\sum _{s\in S} |T_s|\cdot \frac{\alpha ^3}{100}\beta n = \frac{\alpha^5}{7200}\beta \cdot n^3$ distinct pairs were added.\\
In addition, every edge incident to $s\in S$ was included in at most one pair for every $t\in T_s$ for when $s$ was considered, and the same is true for every edge incident to $t\in T_s$ (for some $s \in S$), so overall, every edge is a member of at most $n$ pairs in $B$.\\
Finally, since each edge is a member of at most $n$ edge pairs in $B$, removing all pairs containing an edge of $E(\Gamma) \cup E(F)$ yields a set $B$ of \emph{BP}'s of size at least $$\frac{\alpha^5}{7200}\beta \cdot n^3 - n \cdot |E(\Gamma )\cup E(F)| = \frac{\alpha^5}{7200}\beta \cdot n^3 - O \left( n^2 \log n \right) \geq \frac{\alpha ^5}{8000}\beta \cdot n^3 .$$
\end{proof}

\noindent As in the proof of Lemma \ref{lemma2}, for $F,\Gamma \subseteq G$ we construct the auxiliary graph $X = (V,E)$, with vertex set $V= E(G)$ and edge set $E = \lbrace (e,f) \in \binom{V}{2}:\{e,f\} \in B \rbrace$, with $B$ being a set of booster pairs whose existence is guaranteed by Lemma \ref{lemmaBPs4}.

\begin{lemma}
Let $p_1 \leq p \leq p_2$. The probability that $X^p$ contains no edge is $\exp \left( -\Omega \left( \beta n^2p \right) \right)$.
\end{lemma}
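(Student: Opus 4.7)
The plan is to adapt the proof of Lemma \ref{lemmaSpans} almost verbatim, since the auxiliary graph $X$ satisfies an analogous set of structural properties: $|V(X)| = |E(G)| \leq \binom{n}{2}$, $\Delta(X) \leq n$ by Lemma \ref{lemmaBPs4}, and $|E(X)| \geq \frac{\alpha^5}{8000}\beta n^3$ by that same lemma. The only change is that every occurrence of $\varepsilon$ (or the constants $\tfrac{\varepsilon}{4000}$, $\tfrac{\varepsilon}{2000}$, etc.) must be replaced by a suitable quantity proportional to $\alpha^5\beta$, and the maximum degree $n/2$ replaced by $n$.

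Concretely, I would define, for $S \subseteq V(X)$, a vertex $x \in V(X)\setminus(S \cup N_X(S))$ to be \emph{$S$-useful} if $|N_X(x)\setminus N_X(S)| \geq c_1 \beta n$ for a small constant $c_1$ depending on $\alpha$ (say $c_1 = \alpha^5 \cdot 10^{-5}$). Then I would prove the analogue of Lemma \ref{lemmaUse}: whenever $|S| \leq c_2 \beta n^2$ and $|N_X(S)| \leq c_3 \beta n^2$ with $c_2,c_3$ chosen so that $c_2 + c_3 < \alpha^5/8000$, the number of $S$-useful vertices is at least $c_4 \beta n^2$. The proof is the same double count: edges touching $S \cup N_X(S)$ number at most $\Delta(X)\cdot(|S|+|N_X(S)|) \leq (c_2+c_3)\beta n^3$, so at least $\bigl(\tfrac{\alpha^5}{8000}-(c_2+c_3)\bigr)\beta n^3$ edges lie in $V \setminus (S \cup N_X(S))$, and these are covered by the useful vertices (each contributing at most $n$) and by the non-useful vertices (each contributing less than $c_1\beta n$), yielding the desired lower bound on $|A|$.

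With these ingredients I would run exactly the adaptive sampling argument of Lemma \ref{lemmaSpans}: start with $S = \emptyset$, and repeatedly pick an unsampled $S$-useful vertex and include it in $S$ if its independent $p$-coin flip succeeds. Terminate when either $|N_X(S)| \geq c_3 \beta n^2$, or no unsampled useful vertex remains, or $|S^{\mathrm{sampled}}|$ reaches $c_2 \beta n^2$. Each successful sample enlarges $N_X(S)$ by at least $c_1 \beta n$, so at most $c_3/c_1 \cdot n$ successes suffice; the Chernoff bound (Lemma \ref{chernoff}) gives that among $c_2 \beta n^2$ Bernoulli$(p)$ trials the number of successes falls below $c_3/c_1 \cdot n = o(\beta n^2 p)$ with probability $\exp(-\Omega(\beta n^2 p))$, since $\beta n^2 p \geq \beta n \log n / 2 \to \infty$ for $p \geq p_1$. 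Conditioned on successful sampling, the probability that no vertex of the unsampled portion of $N_X(S)$ (of size at least $c_3 \beta n^2$) appears in $X^p$ is at most $(1-p)^{c_3 \beta n^2} \leq \exp(-c_3 \beta n^2 p)$. A union bound of the two failure modes yields the stated $\exp(-\Omega(\beta n^2 p))$ bound.

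The main obstacle is purely arithmetic bookkeeping: one must fix $c_1,c_2,c_3,c_4$ so that the double count in the analogue of Lemma \ref{lemmaUse} still produces $\Theta(\beta n^2)$ useful vertices after subtracting the edges absorbed by $S\cup N_X(S)$, and simultaneously so that the Chernoff estimate in the sampling step has slack $\beta n^2 p \gg n$. No new conceptual idea is needed beyond those already used in Section \ref{sec-superdirac}; once the constants are chosen consistently with $\Delta(X)\leq n$ (rather than $n/2$) and with the denser edge count $\tfrac{\alpha^5}{8000}\beta n^3$, the argument transfers cleanly.
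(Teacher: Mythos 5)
Your proposal is correct and follows essentially the same route as the paper, which itself proves this lemma by declaring it "essentially identical" to the argument of Lemma \ref{lemmaSpans}: the same useful-vertex double count and adaptive sampling, with the constants rescaled to match $|E(X)| \geq \frac{\alpha^5}{8000}\beta n^3$ and $\Delta(X) \leq n$. The only nit is that the unsampled portion of $N_X(S)$ has size at least $(c_3 - c_2)\beta n^2$ rather than $c_3\beta n^2$, which is harmless once $c_3 > c_2$ is fixed.
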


\noindent The proof of this lemma is essentially identical to the proof in Lemma \ref{lemma2}, as is the proof of the following corollary, and from the corollary the derivation of Lemma \ref{lemma4}:

\begin{corol}
Let $p_1 \leq p \leq p_2$. Then the following holds with probability $1-n^{-\omega (1)}$:\\
For every subgraph $\Gamma$ of $G_p$ with at most $8\beta ^2 n\log n$ edges, such that 
\begin{itemize}
	\item $\Gamma$ is connected;
	\item $\Gamma$ is an $\left( \frac{\alpha}{3}\cdot n ,2\right)$--expander;
	\item For every two disjoint subsets $U,W\subseteq V(G)$ of size $|U|,|W|\geq \alpha n$, there is a matching in $\Gamma$ between $U$ and $W$ of size at least $\frac{\alpha}{100}\beta n$;
\end{itemize}
and for every subgraph $F\subseteq G_p$ such that $\Delta (F) \leq 2k-2$, $G_p\setminus F$ contains a booster pair with respect to $\Gamma$.
\end{corol}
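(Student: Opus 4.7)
The plan is to mimic the union bound argument at the end of Section \ref{sec-superdirac} (the derivation of Corollary 2 from the lemma preceding it), substituting the bounds appropriate to the $(\alpha,\beta)$--dense setting. Fix $p$ with $p_1 \leq p \leq p_2$. For every pair $(F,\Gamma)$ with $F,\Gamma\subseteq G$, $\Delta(F)\leq 2k-2$, and $\Gamma$ satisfying the three structural hypotheses (connected, $(\alpha n/3, 2)$--expander, and linear matching between any two large disjoint sets), let $A_{F,\Gamma}$ denote the event that $F,\Gamma \subseteq G_p$ but $G_p\setminus F$ contains no booster pair with respect to $\Gamma$. The previous lemma states that $Pr[A_{F,\Gamma}] \leq \exp(-\Omega(\beta n^2 p))$ once we also multiply by the factor $p^{|E(F)|+|E(\Gamma)|}$ recording that $F\cup\Gamma$ actually sits inside $G_p$.

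Next I would apply the union bound over the numbers $i=|E(\Gamma)|$ and $j=|E(F)|$, where $1\leq i\leq 8\beta^2 n\log n$ and $0\leq j\leq (k-1)n$. Using $\binom{|E(G)|}{r}\leq \binom{n^2/2}{r}\leq (en^2/(2r))^r$ and $p\leq 2\log n/(\alpha n)$, the counting factor times $p^{i+j}$ becomes
\begin{eqnarray*}
\sum_{i=1}^{8\beta^2 n\log n}\sum_{j=0}^{(k-1)n} \left(\tfrac{en^2 p}{2i}\right)^i \left(\tfrac{en^2 p}{2j}\right)^j
& \leq & e^{O(n\log\log n)} \cdot \left(\tfrac{e}{\alpha\cdot 8\beta^2}\right)^{8\beta^2 n\log n}.
\end{eqnarray*}
Multiplying by the probability bound $\exp(-\Omega(\beta n^2 p)) = \exp(-\Omega(\beta n\log n))$ from the preceding lemma yields an overall bound of the form
\begin{eqnarray*}
\exp\!\left(O(n\log\log n) + 8\beta^2 n\log n\cdot \log\!\left(\tfrac{e}{8\alpha\beta^2}\right) - \Omega(\beta n\log n)\right).
\end{eqnarray*}

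The main obstacle, exactly as in Section \ref{sec-superdirac}, is to make sure the counting contribution $\beta^2\log(1/\beta)$ is dominated by the linear-in-$\beta$ saving from the edge-containing lemma. Since we have already assumed (as stated in the remark that $\beta$ is sufficiently small relative to $\alpha$) that $\beta^2\log(1/(\alpha\beta^2))\ll \beta$, the exponent above tends to $-\omega(\log n)$, so the whole union bound is $n^{-\omega(1)}$, as required. Finally, as in the proof of Theorem \ref{thmDirac}, Lemma \ref{lem-asym} transfers the bound from the $G_p$ model back to any $G_m$ with $m^*_1\leq m\leq m^*_2$, and summing over the $O(n\sqrt{\log n})$ relevant values of $m$ only costs a polynomial factor, so the conclusion still holds with high probability at time $\tau_{2k}$. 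This completes the reduction and yields Lemma \ref{lemma4}.
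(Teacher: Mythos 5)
Your proposal is correct and follows essentially the same route as the paper, which proves this corollary by the same union bound over the edge counts $i=|E(\Gamma)|\leq 8\beta^2 n\log n$ and $j=|E(F)|\leq (k-1)n$, pairing the counting factor $\binom{|E(G)|}{i}\binom{|E(G)|}{j}p^{i+j}$ against the $\exp(-\Omega(\beta n^2 p))$ bound from the preceding lemma and using that $\beta^2\log(1/\beta)\ll\beta$ for $\beta$ small. The transfer back to $G_m$ and $\tau_{2k}$ via Lemma \ref{lem-asym} that you append is likewise exactly the paper's derivation of Lemma \ref{lemma4} from this corollary.
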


\section{$(n,d,\lambda )$--graphs} \label{sec-ndl}

In this section we provide a proof of Theorem \ref{thmNDL}. Throughout the section, $C=10^8$, $c=1/400$, and $G$ is some fixed graph assumed to be an $(n,d,\lambda )$--graph with $d\geq \frac{C\cdot n\cdot \log \log n}{\log n}$ and $\lambda \leq \frac{c\cdot d^2}{n}$.

Let $$d_0 := 10^{-6}\cdot \log n. $$ As in the previous proofs, we prove the theorem by proving two main lemmas:

\begin{lemma} \label{lemma5}
With high probability for every $F\subseteq G_{\tau _{2k}}$ with $\Delta (F) \leq 2k-2$, $G_{\tau _{2k}} \setminus F$ contains a subgraph $\Gamma _0$ with at most $2d_0 n$ edges, which is an $\left( \frac{n}{4} ,2\right)$--expander such that for every two disjoint subsets $U,W\subseteq V(G)$ of size $|U|,|W|\geq \frac{1}{3}d$ it holds that $\nu _{\Gamma _0} (U,W) \geq \frac{1}{6}d$.
\end{lemma}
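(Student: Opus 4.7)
The plan is to follow the template of Lemmas \ref{lemma1} and \ref{lemma3}: first establish a list of deterministic structural properties that $G_{\tau _{2k}}$ satisfies with high probability, then construct $\Gamma _0$ as a random subgraph of $G_{\tau _{2k}}\setminus F$ that inherits the required expansion and matching properties from these structural facts. The Expander Mixing Lemma (Lemma \ref{lemmaMixing}) is the main tool for estimating $e_G(U,W)$; the hypothesis $\lambda \le cd^2/n$ ensures its error term is dominated by the main term $\tfrac{d|U||W|}{n}$ whenever $|U|,|W|$ are at least a constant multiple of $n/d$, which in particular covers the regime $|U|,|W| \ge d/6$.

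First I would pin down $\tau _{2k}$ by an argument in the style of Lemma \ref{lemmaTAUjumb} (first and second moments on the count of vertices of degree $<2k$ in $G_p$), obtaining $p_1 < p^{*} < p_2$ with $p^{*} \approx \tfrac{\log n}{d}$ such that $Np_1 \le \tau _{2k} \le Np_2$ whp; Lemma \ref{lem-asym} then reduces everything to the $G_p$ model. I would then formulate and verify properties $(R1)$--$(R7)$ mirroring $(P1)$--$(P6)$ and $(Q1)$--$(Q7)$: (R1) $\Delta (G_{\tau _{2k}})=O(\log n)$; (R2) $|SMALL(G_{\tau _{2k}})| = n^{o(1)}$ for $SMALL := \{v:d_{G_{\tau _{2k}}}(v) \le d_0\}$; (R3) pairwise distances in $SMALL$ exceed $4$; (R4) small subsets have few internal edges; (R5) medium-sized subsets send many edges outside (for medium-set expansion); (R6) sets of linear size up to $n/2$ send $\ge n\sqrt{\log n}$ edges outside (for connectivity); and crucially (R7) for all disjoint $U,W \subseteq V(G)$ with $|U|,|W| \ge d/6$, $e_{G_{\tau _{2k}}}(U,W) = \Omega (d^2\log n/n)$. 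Each property is handled by Chernoff plus union bound, with Lemma \ref{lemmaMixing} supplying the underlying bound on $e_G(U,W)$.

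With these properties in hand, I would build $\Gamma _0$ by the random sampling scheme of Lemma \ref{lemma1.1}: for every $v\notin SMALL$, retain a uniformly random subset of $2d_0$ of its edges in $G_{\tau _{2k}}$; for $v\in SMALL$, retain all its edges; then discard edges of $F$. This gives $|E(\Gamma _0)| \le 2d_0 n$ and $\delta (\Gamma _0) \ge 2$. The $(n/4, 2)$--expansion follows exactly as in Lemma \ref{lemma1.1}, split into small, medium and linear regimes using (R3)--(R5).

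The main obstacle is the matching property. By K\"onig's theorem, if $\nu _{\Gamma _0}(U,W) < d/6$ for some disjoint $U,W$ with $|U|,|W| \ge d/3$, then after deleting a vertex cover of size $<d/6$ one obtains $U' \subseteq U$, $W' \subseteq W$ with $|U'|,|W'| \ge d/6$ and $e_{\Gamma _0}(U',W') = 0$, so it suffices to show whp $e_{\Gamma _0}(U',W') \ge 1$ for every disjoint pair $U',W'$ of sizes $\ge d/6$. For a fixed such pair, since $\Delta (G_{\tau _{2k}}) \le 10\log n$ by (R1), the random sample at each $u \in U'$ misses $W'$ with probability at most $\exp\bigl(-\Omega(d_0\cdot|N_{G_{\tau _{2k}}}(u)\cap W'|/\log n)\bigr)$; by independence across $u\in U'$ and property (R7), the joint miss probability is at most $\exp\bigl(-\Omega(d_0\cdot e_{G_{\tau _{2k}}}(U',W')/\log n)\bigr) \le \exp\bigl(-\Omega(d_0 d^2/n)\bigr)$. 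The union bound over $\binom{n}{d/6}^2 \le \exp(O(d\log\log n))$ pairs then survives because $d_0 d^2/n = \Omega\bigl(10^{-6}C^2 \cdot n(\log\log n)^2/\log n\bigr)$ dominates $d\log\log n = O\bigl(Cn(\log\log n)^2/\log n\bigr)$; the constants $C=10^8$ and $c=1/400$ in the hypothesis are calibrated precisely so that this ratio is bounded below by a large absolute constant, which is the delicate quantitative point throughout.
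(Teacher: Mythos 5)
Your proposal is correct and follows essentially the same route as the paper: pin down $\tau_{2k}$ via first and second moments plus Lemma \ref{lem-asym}, establish structural properties of $G_{\tau_{2k}}$ (your $(R1)$--$(R7)$ match the paper's $(R1)$--$(R6)$, with an extra connectivity property that the lemma does not actually demand), run the Lemma \ref{lemma1.1} sampling scheme for the $\left(\frac{n}{4},2\right)$--expansion, and get the matching property via K\"onig's theorem, the mixing-lemma edge count, and a union bound over pairs of $\frac{d}{6}$-sets. The one organizational difference is that you extract both properties from a single per-vertex sample of $2d_0$ edges, whereas the paper builds two subgraphs --- a per-vertex sample $\Gamma^{(1)}$ of $d_0$ edges for expansion and an independent Bernoulli edge-sample $\Gamma^{(2)}$ with retention probability $\rho=10^{-6}$ for the matching property --- and takes their union; both give at most $2d_0 n$ edges. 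One caveat on your final quantitative claim: in your unified scheme the per-vertex miss probability carries a factor $\frac{2d_0}{\Delta(G_{\tau_{2k}})} \leq \frac{2d_0}{10\log n}$, so the exponent you gain is about $\frac{10^{-6}}{250}\cdot\frac{d^2\log n}{n}\geq 0.4\,d\log\log n$ against a union-bound cost of about $\frac{1}{3}\,d\log\log n$; the ratio is roughly $1.2$, not the "large absolute constant" of order $10^{-6}C=100$ you assert (you dropped the $\frac{1}{250}$ and $\frac{1}{3}$), though since it exceeds $1$ the argument still closes. The paper's Bernoulli sampling avoids the division by $\Delta$ and enjoys a margin about five times larger.
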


\begin{lemma} \label{lemma6}
With high probability for every $F\subseteq G_{\tau _{2k}}$ with $\Delta (F) \leq 2k-2$ and for every subgraph $\Gamma $ of $G_{\tau _{2k}} \setminus F$ with at most $3d_0 n\log n$ edges, which is an $\left( \frac{n}{4} ,2\right)$--expander such that for every two disjoint subsets $U,W\subseteq V(\Gamma )$ of size $|U|,|W|= \frac{1}{3}d$, there is a matching in $\Gamma _0$ between $U$ and $W$ of size at least $\frac{1}{6}d$, the graph $G_{\tau _{2k}}\setminus F$ contains a booster pair with respect to $\Gamma$. 
\end{lemma}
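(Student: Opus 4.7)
The plan is to follow the three-step strategy used in the proof of Lemma \ref{lemma4}, with two essential adaptations for the $(n,d,\lambda)$ setting: the sampling probability is calibrated to $p \asymp \log n/d$ rather than $\log n/n$, and the natural scales in the auxiliary graph $X$ are governed by $d$.

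First, for fixed $F$ and $\Gamma$ satisfying the hypothesis, I would build a large set $B$ of booster pairs in $G \setminus F$ with respect to $\Gamma$. Applying P\'osa's lemma to $\Gamma$ (an $(n/4,2)$-expander, hence $|V(P)| \geq 3n/4$) yields a set $S$ of $n/4$ starting vertices and, for each $s \in S$, a set $T_s$ of $n/4$ possible endpoints of paths $P_{s,t}$ obtained by rotations. For each $(s,t) \in S \times T_s$ I would split into the two cases used in Lemma \ref{lemmaBPs4}, now using that $d_G(s)=d_G(t)=d$: either both $s$ and $t$ have at least $2d/3$ neighbours on $V(P)$, in which case a pigeon-hole argument produces a vertex $c$ on $P_{s,t}$ past which the predecessor/successor sets $L,R$ are disjoint and of size $\geq d/3$, and the matching hypothesis on $\Gamma$ gives a matching of size $\geq d/6$ in $\Gamma$ between $L$ and $R$, each matching edge supplying a booster pair; or some endpoint has $\geq d/3$ neighbours outside $V(P)$, each paired with an arbitrary edge at the other endpoint. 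The resulting set $B$ has $\Omega(n^2 d)$ pairs with each edge contained in at most $n/2$ of them; removing pairs that hit $E(F) \cup E(\Gamma)$ loses only $O(n^2 \log n) = o(n^2 d)$ pairs, since $\log n = o(d)$ by the lower bound on $d$.

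Next, form the auxiliary graph $X$ with $V(X) = E(G)$ and $E(X) = B$, so $|V(X)| = nd/2$, $|E(X)| = \Omega(n^2 d)$ and $\Delta(X) \leq n/2$. Since the average degree of $X$ is $\Theta(n)$, the natural useful threshold in the analogue of Lemma \ref{lemmaUse} is $\Theta(n)$, giving a pool $A$ of $\Omega(nd)$ useful vertices and allowing the sampling procedure of Lemma \ref{lemmaSpans} to drive $|N_X(S)|$ up to $\Omega(nd)$. For any $p$ of order $(1+o(1))\log n/d$ (the regime of $\tau_{2k}$ for $(n,d,\lambda)$-graphs, by Lemma \ref{lemma5}), this gives
\[
\Pr\bigl[X^p \text{ contains no edge}\bigr] \leq \exp\bigl(-\Omega(p \cdot nd)\bigr) = \exp\bigl(-\Omega(n \log n)\bigr).
\]
A union bound over all $F \subseteq G$ with $|E(F)| \leq (k-1)n$ and $\Gamma \subseteq G$ with $|E(\Gamma)| \leq 3 d_0 n$ inside $G_p$ then produces at most $\sum_{i,j} \binom{nd/2}{i}\binom{nd/2}{j} p^{i+j} \cdot \exp(-\Omega(n \log n))$, which is $\exp(O(10^{-5} n \log n + n \log \log n)) \cdot \exp(-\Omega(n \log n)) = o(1)$ once the constants are balanced. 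Transferring back to $G_{\tau_{2k}}$ via Lemma \ref{lem-asym}, as in the previous sections, completes the argument.

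The main obstacle is the delicate constant race. Unlike in Lemma \ref{lemma4}, where the failure exponent $\exp(-\Omega(\beta n \log n))$ trivially dominates a union bound of shape $\exp(O(\beta^2 n \log n \cdot \log(1/\beta)))$, here both exponents are of the form $(\text{absolute constant}) \cdot n \log n$ in the sparsest allowed regime $d \asymp n \log \log n / \log n$. The proof goes through precisely because the specific values $C = 10^8$ and $c = 1/400$ in the theorem are chosen so that the failure constant (which comes from the size of $B$ and the matching bound $d/6$ guaranteed by the expander mixing lemma with $\lambda \leq c d^2/n$) is strictly larger than the union-bound constant. Tracking these constants through the booster-pair count, the useful-threshold analysis, and the Chernoff and union-bound steps is the technical heart of the proof.
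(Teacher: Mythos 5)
Your proposal is correct and follows essentially the same route as the paper, which itself only sketches this proof by pointing to the analogues of Lemma \ref{lemmaBPs4} (booster pairs via rotations, the two cases on $|N_G(s)\cap P|$, and the matching hypothesis), the auxiliary graph $X$ with $|V(X)|=nd/2$ and failure probability $2\exp(-\frac{1}{8100}ndp)$, and the union bound over sparse $\Gamma$ and $F$ transferred back via Lemma \ref{lem-asym}. One small correction to your closing remark: the constant race in this lemma is between the $10^{-6}$ in $d_0$ (which controls the union-bound exponent $\approx 4\cdot 10^{-5}\, n\log n$) and the $1/8100$ from the booster-pair count, rather than the constants $C$ and $c$, which enter earlier in establishing the matching hypothesis of Lemma \ref{lemma5}.
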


\noindent Theorem \ref{thmNDL} is obtained from Lemma \ref{lemma5} and Lemma \ref{lemma6} in the same way Theorem \ref{thmDirac} and Theorem \ref{thmJumbled} were obtained from the corresponding lemmas in previous sections.

Throughout the proofs we will assume that $d = o(n)$, since the complementing case when $d \geq a\cdot n$ is already covered by Theorem \ref{thmJumbled} or Theorem \ref{thmDirac}. Indeed, in that case one can observe that if $a<\frac{2}{3}$ then by Lemma \ref{lemmaMixing} $G$ is a $\left( \frac{1}{3}a, \frac{1}{10}a^3 \right)$--dense graph, and otherwise the graph is obviously super--Dirac.

\subsection{Proof of Lemma \ref{lemma5}}
For a graph $\Gamma$ on $n$ vertices, let $$SMALL(\Gamma ):=\lbrace v\in V(\Gamma ): d_{\Gamma}(v)\leq d_0 \rbrace.$$ We will first show that WHP $G_{\tau _{2k}}$ has the following properties:
\begin{itemize}
	\item[(R1)] $\Delta \left( G_{\tau _{2k}} \right) \leq 10 \log n$;
	\item[(R2)] $\left| SMALL \left(G_{\tau _{2k}} \right) \right| \leq n^{0.1}$;
	\item[(R3)] $\forall u,v\in SMALL \left(G_{\tau _{2k}} \right) : dist_{G_{\tau _{2k}}}(u,v) > 4$;
	\item[(R4)] $\forall U\subset V(G)\ s.t.\ 0.8d_0\leq |U| \leq \frac{5d^2}{n}:e_{G_{\tau _{2k}}}(U)<0.4d_0 \cdot |U|$;
	\item[(R5)] $\forall U,W\subset V(G)\ disjoint\ s.t.\ \frac{d^2}{n} \leq |U| \leq \frac{n}{4},|W|=2|U|:e_{G_{\tau _{2k}}}(U,V(G)\setminus (U\cup W)) \geq 0.1 |U|\log n$;
	\item[(R6)] $\forall U,W\subset V(G)\ disjoint\ s.t.\ |U|,|W| \geq \frac{1}{6}d : e_{G_{\tau _{2k}} }\left( U, W \right) \geq \frac{d^2\log n}{50n}$.
\end{itemize}

\begin{proof}
For each property, we will bound the probability of $G_{\tau _{2k}}$ failing to have it separately.\\
Let $N = |E(G)| = \frac{nd}{2}$, and let
$$p_1 = \frac{\log n}{d},\ p_2 = \frac{1.1\log n}{d},\ m_1 = N\cdot p_1 = \frac{1}{2}n\log n,\ m_2 = N\cdot p_2 = 0.55n\log n.$$

\begin{lemma} \label{lemmaTAU3}
With high probability $m_1 \leq \tau _{2k} \leq m_2$.
\end{lemma}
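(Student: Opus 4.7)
The plan is to follow the same template as Lemma \ref{lemmaTAU} and Lemma \ref{lemmaTAUjumb}: invoke Lemma \ref{lem-asym} to reduce the hitting time statement to two estimates in the $G_p$ model. Since $\delta(G) \geq 2k$ is monotone increasing, it suffices to show $Pr[\delta(G_{p_1}) \geq 2k] = o(1)$ and $Pr[\delta(G_{p_2}) < 2k] = o(1)$.

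For the upper bound $\tau_{2k} \leq m_2$, a direct union bound suffices. Since $G$ is $d$--regular, for every $v$ we have $d_{G_{p_2}}(v) \sim Bin(d, p_2)$ with mean $dp_2 = 1.1\log n$. Bounding
$$ Pr[Bin(d, p_2) < 2k] \leq 2k \binom{d}{2k-1} p_2^{2k-1}(1-p_2)^{d-2k+1} = O((\log n)^{2k-1}) \cdot n^{-1.1}, $$
and summing over the $n$ vertices, we obtain $o(1)$.

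For the lower bound $\tau_{2k} \geq m_1$, I would use Chebyshev's inequality, as in the proof of Lemma \ref{lemmaTAUjumb}. Let $X$ count the vertices $v$ with $d_{G_{p_1}}(v) < 2k$; since $dp_1 = \log n$, the standard binomial calculation gives $Pr[d_{G_{p_1}}(v) = 2k-1] = \Theta((\log n)^{2k-1}/n)$, hence $\mathbb{E}[X] = \Theta((\log n)^{2k-1}) \to \infty$. The key observation simplifying the variance bound is that, since $G$ is $d$--regular, for any two non-adjacent vertices $u, v$ the degrees $d_{G_{p_1}}(u)$ and $d_{G_{p_1}}(v)$ are functions of disjoint sets of independent edge indicators and are therefore independent, so their covariance vanishes. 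For adjacent $u, v$, decomposing $d_{G_{p_1}}(u) = X_{uv} + A$ and $d_{G_{p_1}}(v) = X_{uv} + B$ with $X_{uv}, A, B$ independent and $A, B \sim Bin(d-1, p_1)$, a brief computation yields
$$ Cov[Y_u, Y_v] = p_1(1-p_1) \cdot Pr[Bin(d-1, p_1) = 2k-1]^2, $$
where $Y_v = \mathbb{1}[d_{G_{p_1}}(v) < 2k]$. Summing over the $nd/2$ edges of $G$ gives a contribution of order $ndp_1 \cdot (\log n)^{2(2k-1)}/n^2 = O((\log n)^{4k-1}/n) = o(1)$, which is negligible compared to $\mathbb{E}[X]^2 = \Theta((\log n)^{4k-2})$. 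Together with $\sum_v Var[Y_v] \leq \mathbb{E}[X]$, this gives $Var[X]/\mathbb{E}[X]^2 = o(1)$, and Chebyshev yields $Pr[X = 0] = o(1)$.

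The main potential obstacle would be controlling the pairwise covariances, which in the $(\alpha,\beta)$--dense setting required the technical refinement via the auxiliary value $p^*$. Here the $d$--regularity of $G$ collapses this difficulty: all vertex degrees are identical, the events $\{d_{G_{p_1}}(v) < 2k\}$ are pairwise independent whenever $u, v$ are non-adjacent, and the adjacent pairs contribute only $O(nd)$ terms, which is too few to spoil the second-moment estimate. Accordingly no analogue of the $p^*$ construction is needed, and one may work directly with $p_1$ and $p_2$ as defined.
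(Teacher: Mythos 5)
Your proposal is correct and follows essentially the same route as the paper: a union bound over vertices for $\tau_{2k}\leq m_2$, and a second-moment (Chebyshev) argument for $\tau_{2k}\geq m_1$ in which non-adjacent pairs contribute zero covariance and the $O(nd)$ adjacent pairs contribute a negligible amount. Your exact covariance identity $p_1(1-p_1)\Pr[Bin(d-1,p_1)=2k-1]^2$ is a slightly sharper way of bounding the adjacent-pair terms than the paper's crude upper bound on the joint probability, but the structure and conclusion are the same, and you correctly observe that regularity makes the $p^*$ normalization of the $(\alpha,\beta)$-dense case unnecessary.
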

\begin{proof}
It suffices to show that
\begin{enumerate}
	\item $Pr\left[ \delta \left( G_{p_1} \right) \geq 2k \right] = o(1)$;
	\item $Pr\left[ \delta \left( G_{p_2} \right) < 2k \right] = o(1)$.
\end{enumerate}
We now prove both estimates.

\begin{enumerate}
	\item We will bound the probability using Chebyshev's inequality. Let $X$ be the random variable counting the number of vertices in $V(G)$ with degree less than $2k$ in $G_{p_1}$. First, bound $\mathbb{E}\left[ X \right]$ from below:
	\begin{eqnarray*}
	\mathbb{E}\left[ X \right] & = & \sum\limits_{v\in V(G)} Pr\left[ d_{G_{p_1}}(v) < 2k \right] = n\cdot \sum\limits_{i=0}^{2k-1} \binom{d}{i} {p_1}^i (1-p_1)^{d-i} \\
	& \geq & n\cdot \left( \frac{dp_1}{2k} \right) ^{2k-1} \cdot (1-p_1)^d \geq \omega (1) \cdot \exp \left( \log n - \frac{dp_1}{1-p_1} \right) = \omega (1).
	\end{eqnarray*}
	
	Next, we bound $Var[X]$ from above. By following the same first steps of the bound in the proof of Lemma \ref{lemmaTAUjumb}, we get
	\begin{eqnarray*}
	Var[X] & \leq & \mathbb{E}[X] + \sum _{(u, v) \in E(G)} p_1\cdot \sum _{i=0}^{2k-2} \sum _{j=0}^{2k-2} \binom{d}{i} \binom{d}{j} {p_1}^{i+j} (1-p_1) ^{2d -i -j -2} \\
	& \leq & \mathbb{E}[X] + \frac{1}{2}n\log n \cdot 4k^2 \left( \frac{e\log n}{(2k-2)(1-p_1)} \right) ^{4k-4} (1-p_1) ^{2d -2} \\
	& = & \mathbb{E}[X] + O\left( n\log ^{4k-3} n \cdot \exp (-2dp_1) \right) \\
	& = & \mathbb{E}[X] + o(1).
	\end{eqnarray*}
	
	Now, $\frac{Var[X]}{\mathbb{E}[X]^2} = O\left( \mathbb{E}[X] ^{-1} \right) = o (1),$ and by Chebyshev's inequality we get $$Pr\left[ \delta \left( G_{p_1} \right) \geq 2k \right] = Pr[X=0] = o(1).$$

	\item Let $v\in V(G)$. The probability that $d_{G_{p_2}}(v) <2k$ is at most
	\begin{eqnarray*}
	Pr\left[ d_{G_{p_2}}(v) <2k \right] & \leq & \sum\limits_{i=0}^{2k-1} \binom{d}{i}{p_2}^i(1-p_2)^{d-i} \leq \sum\limits_{i=0}^{2k-1} (4\log n)^i \exp \left( - 1.1 \log n \right) \leq n^{-1.05} .
	\end{eqnarray*}
	By the union bound we get
	$$Pr\left[ \delta \left( G_{p_2} \right) < 2k \right] \leq \sum _{v\in V(G)} Pr\left[ d_{G_{p_2}}(v) <2k \right] \leq n^{-0.05} = o(1).$$
\end{enumerate}

\end{proof}

We continue our proof. With Lemma \ref{lemmaTAU3} proved, it suffices to show that $Pr\left( G_{p_1} \notin (R) \right) = o(1),$ for an increasing property $(R)$, and that $Pr\left( G_{p_2} \notin (R) \right) = o(1)$ for a decreasing property $R$.

As the calculations involved in bounding the probability that $G_{2k}$ does not have properties $(R1)-(R4)$ are almost identical to the calculations in the proof of Lemma \ref{lemma1} involving their counterpart properties $(P1)-(P4)$, we omit these calculations to avoid repetition.

\begin{itemize}
	
	\item[(R5)] For sets $U,W\subseteq V(G)$ with $\frac{d^2}{n} \leq |U| \leq \frac{n}{4}$ and $|W|=2|U|$ we have $$\frac{1}{4}n\cdot |U| \leq |U|\cdot |V(G) \setminus (U \cup W)| \leq n\cdot |U|.$$
	By the fact that $|U|\geq \frac{d^2}{n}$ and $\lambda \leq \frac{cd^2}{n}$, we get $$ \lambda \sqrt{|U|\cdot n} \leq \frac{cd^2}{n} \sqrt{|U|\cdot n} = c\cdot d\cdot \sqrt{|U|d^2/n} \le c\cdot d|U| ,$$
	and by Lemma \ref{lemmaMixing}, $$
	e_G(U,V(G)\setminus (U\cup W)) \geq \frac{1}{4}d\cdot |U| - \lambda \sqrt{|U|\cdot n} \geq 0.2d|U|.$$
	
	So
		\begin{eqnarray*}
		Pr\left[ G_{p_1} \notin {\bf (R5)} \right] & \leq & \sum\limits_{i=\frac{d^2}{n}}^{\frac{n}{4}} \binom{n}{i} \binom{n}{2i} Pr\left[ Bin\left( 0.2d\cdot i , p_1 \right) \leq 0.1 i \log n \right] \\
		& \leq & \sum\limits_{i=\frac{d^2}{n}}^{\frac{n}{4}} \left( \frac{en}{2i} \right) ^{2i} \cdot \exp \left( -\Omega \left( i\log n \right) \right) = o(1) ,
	\end{eqnarray*}
	with the last bound obtained by observing that for $i \geq \frac{d^2}{n} =  n^{1-o(1)}$ we have $\log \left( \frac{n}{i} \right) = o(\log (n))$.
	
	\item[(R6)] By Lemma \ref{lemmaMixing}, if $|U|,|W| \geq \frac{1}{6}d$ and $\lambda \leq \frac{cd^2}{n}$ we have $$e_G(U,W) \geq \frac{d}{n}|U||W| - \lambda \sqrt{|U||W|} \geq \frac{d^3}{40n}.$$ So
	\begin{eqnarray*}
		Pr\left[ G_{p_1} \notin {\bf (R6)} \right] & \leq & \binom{n}{\frac{1}{6}d}^2 \cdot Pr\left[ Bin\left( \frac{d^3}{40n} , p_1 \right) < \frac{d ^2\log n}{50n} \right] \\
		& \leq & \left( \frac{6en}{d} \right) ^{\frac{1}{3}d} \cdot \exp \left( - \frac{d^2\log n}{40\cdot 5^2\cdot 2n} \right) \\
		& \leq & \exp \left( -0.5C \cdot d\log \log n \right) = o(1) ,
	\end{eqnarray*}

	here using the assumption that $d \geq C\cdot \frac{n\log \log n}{\log n}$.
	
\end{itemize}
\end{proof}

\begin{lemma} \label{lemma5.1}
Let $\Gamma$ be a graph on $n$ vertices such that $\delta \left( \Gamma \right) \geq 2k$ and such that $\Gamma \in (R1)$--$(R5)$, and let $F\subseteq \Gamma$ with $\Delta (F) \leq 2k-2$. Then $\Gamma \setminus F$ contains a subgraph $\Gamma ^{(1)}$ which is an $\left( \frac{n}{4},2 \right)$--expander with at most $d_0n$ edges.
\end{lemma}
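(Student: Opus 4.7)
The plan is to follow the same blueprint as in the proofs of Lemma \ref{lemma1.1} and Lemma \ref{lemma3.1}, adapted to the $(n,d,\lambda)$ setting. I would construct a random candidate subgraph $\Gamma_0 \subseteq \Gamma \setminus F$: for each $v \in SMALL(\Gamma)$ set $E_v := E_\Gamma(v)$, and for each $v \in V(\Gamma) \setminus SMALL(\Gamma)$ let $E_v$ be a uniformly random subset of $E_\Gamma(v)$ of size exactly $d_0$; then define $E(\Gamma_0) := \bigcup_v E_v \setminus E(F)$. By $(R1)$ and $(R2)$ the edges contributed by small vertices total at most $|SMALL(\Gamma)| \cdot \Delta(\Gamma) \leq n^{0.1} \cdot 10 \log n = o(n)$, so $|E(\Gamma_0)| \leq d_0 n$. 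Moreover $\delta(\Gamma_0) \geq \min\{\delta(\Gamma) - (2k-2),\ d_0 - (2k-2)\} \geq 2$. It remains to show that with positive probability $\Gamma_0$ is an $(n/4, 2)$-expander, from which the deterministic conclusion follows.

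Fix $U \subseteq V(\Gamma)$ with $|U| \leq n/4$ and write $U = U_1 \sqcup U_2$ where $U_1 = U \cap SMALL(\Gamma)$ and $n_i = |U_i|$. Property $(R3)$ yields $|N_{\Gamma_0}(U_1)| \geq 2 n_1$ together with $|(U_1 \cup N_{\Gamma_0}(U_1)) \cap (U_2 \cup N_{\Gamma_0}(U_2))| \leq n_2$, so the required $|N_{\Gamma_0}(U)| \geq 2|U|$ reduces to showing $|N_{\Gamma_0}(U_2)| \geq 4 n_2$. I would split into three regimes. For $n_2$ smaller than a small constant multiple of $d_0$ (so that $d_{\Gamma_0}(v) \geq 5 n_2$ for every $v \in U_2$), pick any $v \in U_2$ and the bound holds deterministically. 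For $n_2$ between this threshold and $d^2/n$, assume $|N_{\Gamma_0}(U_2)| < 4 n_2$, let $W \supseteq N_{\Gamma_0}(U_2) \setminus U_2$ have size at most $4 n_2$ so that $|U_2 \cup W| \leq 5 n_2 \leq 5 d^2/n$, and derive a contradiction from the double count $(d_0 - 2k + 2) n_2 \leq \sum_{u \in U_2} d_{\Gamma_0}(u) \leq 2 e_\Gamma(U_2 \cup W)$ against $(R4)$ (whose constants are calibrated for exactly this purpose).

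The main obstacle is the third regime, $d^2/n \leq n_2 \leq n/4$, handled via a union bound. If $|N_{\Gamma_0}(U)| < 2|U|$ then there is a set $W$ of size at most $2 n_2 + 3 n_1 \leq 2.5 n_2$ (using $(R2)$) containing $U_1 \cup N_{\Gamma_0}(U)$, such that $e_{\Gamma_0}(U_2, V(\Gamma) \setminus (U_2 \cup W)) = 0$. By $(R5)$, $e_\Gamma(U_2, V(\Gamma) \setminus (U_2 \cup W)) \geq 0.1 n_2 \log n$. For each $u \in U_2$, the standard hypergeometric estimate gives
\[
Pr\left[ E_u \cap E_\Gamma\bigl(u, V(\Gamma) \setminus (U_2 \cup W)\bigr) \subseteq E_F(u) \right] \leq \exp\!\left( -\frac{d_0}{\Delta(\Gamma)} \bigl( d_\Gamma(u, V(\Gamma) \setminus (U_2 \cup W)) + 2k - 2 \bigr) \right),
\]
and multiplying across $u \in U_2$ and applying $\Delta(\Gamma) \leq 10 \log n$ from $(R1)$ yields a joint bound of at most $\exp(-10^{-8} n_2 \log n)$. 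The key calibration is that $n_2 \geq d^2/n$ combined with $d \geq C n \log \log n / \log n$ forces $n_2 = n^{1 - o(1)}$, so the number of candidate pairs $(U_2, W)$ is at most $\binom{n}{n_2}^{O(1)} = \exp\bigl( O(n_2 \log(n/n_2)) \bigr) = \exp(o(n_2 \log n))$, which is dominated by the probability estimate. Summing over $n_2$ gives $o(1)$, so with positive probability no bad $U$ exists and $\Gamma_0$ is a valid $(n/4,2)$-expander. The main delicacy throughout is that, unlike the Dirac case where the per-vertex budget $d_0 \propto \log n$ can be taken arbitrarily generous, here $d_0 = 10^{-6} \log n$ is essentially the smallest value compatible with $(R5)$ and the assumption $d \geq C n \log \log n / \log n$, so the constants in $(R4)$ and $(R5)$ must be sharp.
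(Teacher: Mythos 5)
Your proposal is exactly the paper's intended argument: the paper's own ``proof'' of this lemma is a one-line reference to Lemma \ref{lemma1.1}, and your write-up is the correct instantiation of that template, including the one point that genuinely changes in the $(n,d,\lambda)$ setting --- namely that in the regime $n_2\ge d^2/n$ the crude $3^n$ union bound of the Dirac case no longer beats $\exp(-\Theta(n_2\log n))$ (since $n_2\log n$ may be $o(n)$ here), so one must use $\binom{n}{n_2}\binom{n}{2.5n_2}=\exp\bigl(O(n_2\log(n/n_2))\bigr)=\exp(o(n_2\log n))$, which you correctly flag as the key calibration.

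One caveat: your parenthetical claim that the constants in $(R4)$ are ``calibrated for exactly this purpose'' does not check out as the property is literally stated. In the middle regime the double count gives $(d_0-2k+2)n_2\le 2e_{\Gamma}(U_2\cup W)$ with $|U_2\cup W|\le 5n_2$, and $(R4)$ only yields $2e_{\Gamma}(U_2\cup W)<2\cdot 0.4d_0\cdot 5n_2=4d_0n_2$, which is no contradiction; one needs the constant in $(R4)$ to be below $0.1$ (compare $(P4)$, where the bound is $0.1\varepsilon^2|U|\log n=0.02d_0|U|$). Similarly, $(R5)$ is stated for $|W|=2|U|$ while your constructed $W$ has size $2n_2+3n_1$, which can exceed $2n_2$. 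Both are miscalibrations in the paper's property statements rather than in your reasoning --- the omitted Chernoff computations behind $(R4)$ and $(R5)$ support the slightly stronger versions that the argument actually needs --- but as written these two steps do not close, and a careful version of the proof should restate $(R4)$ with a smaller constant and $(R5)$ with $|W|=2.5|U|$.
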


\begin{proof}
The proof is essentially identical to the proof of Lemma \ref{lemma1.1}, up to some constants being different.
\end{proof}

\begin{lemma} \label{lemma5.2}
Let $\Gamma$ be a graph on $n$ vertices such that $\delta \left( \Gamma \right) \geq 2k$, $|E(\Gamma )| = m_1$ and such that $\Gamma \in (R1),(R6)$, and let $F\subseteq \Gamma$ with $\Delta (F) \leq 2k-2$. Then $\Gamma \setminus F$ contains a subgraph $\Gamma ^{(2)}$ which has at most $d_0n$ edges, such that for every two disjoint subsets $U,W\subseteq V(\Gamma )$ of size $|U|,|W|= \frac{1}{3} d$ it holds that $\nu _{\Gamma ^{(2)}} (U,W) \geq \frac{1}{6}d$.
\end{lemma}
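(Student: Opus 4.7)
The plan is to follow the template of Lemma~\ref{lemma3.3}: let $\Gamma^{\prime} := \Gamma \setminus F$, take $\Gamma^{(2)}$ to be a random subgraph of $\Gamma^{\prime}$ obtained by retaining each edge independently with probability $q$, and show that with positive probability $\Gamma^{(2)}$ satisfies both the sparsity and the matching condition. Since $|E(\Gamma)| = m_1 = \tfrac{1}{2}n\log n$ and we are allowed at most $d_0 n = 10^{-6}n\log n$ edges, I would set $q := 10^{-6}$, so that $\mathbb{E}[|E(\Gamma^{(2)})|] \le q\,m_1 = \tfrac{1}{2}d_0 n$ and Markov's inequality gives $|E(\Gamma^{(2)})| \le d_0 n$ with probability at least $\tfrac{1}{2}$.

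For the matching condition I would argue via K\"onig's theorem rather than by bounding $\Pr[\nu = i]$ directly as in Lemma~\ref{lemma3.3}. If $\nu_{\Gamma^{(2)}}(U,W) < \tfrac{1}{6}d$ for some disjoint $U, W \subseteq V(\Gamma)$ with $|U| = |W| = \tfrac{1}{3}d$, then the bipartite subgraph of $\Gamma^{(2)}$ between $U$ and $W$ admits a vertex cover $S = S_U \sqcup S_W$ with $|S| < \tfrac{1}{6}d$. Setting $U^{\prime} := U \setminus S_U$ and $W^{\prime} := W \setminus S_W$, both sizes exceed $\tfrac{1}{6}d$, so property $(R6)$ yields $e_{\Gamma}(U^{\prime}, W^{\prime}) \ge \tfrac{d^2 \log n}{50 n}$. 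The edges of $F$ contribute at most $|U^{\prime}|\Delta(F) = O(d)$, which is negligible compared to $\tfrac{d^2 \log n}{50 n}$ since the hypothesis $\tfrac{d \log n}{n} \ge C \log \log n$ forces $\tfrac{d^2 \log n}{n} \gg d$; hence $e_{\Gamma^{\prime}}(U^{\prime}, W^{\prime}) \ge \tfrac{d^2 \log n}{100 n}$. All of these edges must be missing from $\Gamma^{(2)}$, an event of probability at most
\[
    (1-q)^{d^2 \log n /(100 n)} \le \exp\!\left(- q \cdot \tfrac{d^2 \log n}{100 n} \right) \le \exp(-d \log \log n),
\]
using $q = 10^{-6}$, $C = 10^{8}$ and $\tfrac{d^2 \log n}{n} \ge C\, d \log \log n$.

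A union bound over the at most $\binom{n}{d/3}^{2} \le (3en/d)^{2d/3}$ pairs $(U, W)$ and the at most $2\binom{2d/3}{d/6} \le e^{O(d)}$ choices for $S$, together with the inequality $\log(n/d) \le \log \log n$ (coming from $d \ge C n \log \log n / \log n$), bounds the total failure probability by
\[
    \exp\!\left( \tfrac{2d}{3}\log(3en/d) + O(d) - d \log \log n \right) \le \exp\!\left( -\Omega(d \log \log n) \right) = o(1).
\]
Combining this with the Markov estimate on $|E(\Gamma^{(2)})|$ shows that with probability at least $\tfrac{1}{2} - o(1) > 0$ both conditions are satisfied, so the desired $\Gamma^{(2)}$ exists.

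The main obstacle, and the reason for switching to the K\"onig route, is that the direct estimate $\Pr[\nu = i] \le \binom{e_{\Gamma}(U,W)}{i} q^{i}(1-q)^{e_{\Gamma}(U,W) - 2i\Delta(\Gamma)}$ used in Lemma~\ref{lemma3.3} fails here: since $d = o(n)$, the correction term $2i \Delta(\Gamma) = \Theta(d \log n)$ dominates the guarantee $e_{\Gamma}(U, W) = \Theta(d^2 \log n / n)$ coming from $(R6)$, so the residual exponent goes the wrong way. Routing through K\"onig's theorem replaces the dependence on $\Delta(\Gamma)$ with a clean invocation of $(R6)$ on the uncovered bipartite part $U^{\prime} \times W^{\prime}$, and the union-bound bookkeeping then closes precisely because of the quantitative hypothesis $d \ge C n \log \log n / \log n$.
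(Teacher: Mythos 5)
Your proposal is correct and follows essentially the same route as the paper: sparsify $\Gamma\setminus F$ by retaining edges independently with probability $10^{-6}$, handle the edge count by Markov, and reduce the failure of the matching condition (via K\H{o}nig, which the paper invokes implicitly by passing directly to two $(d/6)$-subsets $U',W'$ spanning no edge of $\Gamma^{(2)}$) to an event controlled by $(R6)$ and a union bound, with the same numerology $\rho C/50=2$ beating the entropy term $O(d\log\log n)$. The only difference is bookkeeping: the paper unions over the pairs $(U',W')$ directly rather than over $(U,W)$ and the cover $S$, which changes nothing.
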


\begin{proof}
Let $\Gamma ^{\prime} := \Gamma \setminus F$, $\rho = 10^{-6}$, and let $\Gamma ^{(2)}$ be a random subgraph of $\Gamma ^{\prime}$ distributed according to $\Gamma ^{\prime}_{\rho}$. We will show that with positive probability $\Gamma ^{(2)}$ satisfies the desired properties, and therefore prove the existence of the desired subgraph.

Let $U,W\subseteq V(G)$ be some vertex sets of size $\frac{1}{3}d$. If there is no matching of size $\frac{1}{6}d$ between $U$ and $W$ in $\Gamma ^{(2)}$ then there are two subsets, $U^{\prime}\subseteq U, W^{\prime}\subseteq W$ of size $\frac{1}{6}d$, such that $E_{\Gamma ^{(2)}}\left( U^{\prime},W^{\prime} \right) =\emptyset$. By the union bound and by the fact that $\Gamma \in (R6)$, the probability that such a pair of subsets exists is at most $$\binom{n}{\frac{1}{6}d}^2\cdot  (1-\rho )^{\frac{d^2\log n}{50n} - \frac{1}{3}d\cdot(2k-2)} \leq \exp \left( \left( 0.4-\rho\cdot \frac{C}{50} \right) \cdot d\log \log n +O(d) \right) = o(1).$$
Observing that the probability that $\Gamma ^{(2)}$ has at most $d_0 n$ edges is of order $\Omega (1)$, we conclude that the probability that $\Gamma ^{(2)}$ satisfies both conditions is positive, and so such a subgraph exists.
\end{proof}

We obtain Lemma \ref{lemma5} by setting $\Gamma _0 = \Gamma ^{(1)} \cup \Gamma ^{(2)}$.

\subsection{Proof of Lemma \ref{lemma6}}
This proof follows the same general outline as the proofs of Lemma \ref{lemma2} and of Lemma \ref{lemma4}.\\
Similarly to the previous proofs, we will first show that if $F\subseteq G$ is a subgraph with $\Delta (F) \leq 2k-2$, and $\Gamma \subseteq G$ is a sparse, expanding subgraph with the property of having an $\Omega (d)$-sized matching between large vertex subsets, then $G\setminus F$ contains a set of $\Omega (n^2d)$ booster pairs with respect to $\Gamma$ in which the number of booster pairs containing each edge is bounded from above by an amount linear in $n$.

We will then, with the aid of an auxiliary graph, and by applying the union bound over all sparse subgraphs, show that with high probability $G_{\tau _{2k}}$ contains a booster pair for all the expanding subgraphs contained in $G_{\tau _{2k}}$ with these properties.

Since this proof bears many similarities to the relevant previous proofs in this paper, we only sketch it briefly.

\begin{lemma} \label{lemmaBPs6}

Let $\Gamma \subseteq G$ be a non--Hamiltonian subgraph with at most $3d_0n$ edges, such that $\Gamma$ is an $\left( \frac{n}{4} ,2\right)$--expander, and such that for every two disjoint subsets $U,W\subseteq V(G)$ of size $|U|,|W| = \frac{1}{3}d$, there is a matching in $\Gamma$ between $U$ and $W$ of size at least $\frac{1}{6}d$, and let $F\subseteq G$ with $\Delta (F) \leq 2k-2$. then $G\setminus F$ contains a set $B\subseteq \binom{E(G)}{2}$ of booster pairs with respect to $\Gamma$, such that
\begin{itemize}
	\item $|B| \geq \frac{1}{800}n^2d$;
	\item Every edge of $E(G\setminus F)$ is a member of at most $\frac{n}{2}$ booster pairs in $B$.
\end{itemize}
\end{lemma}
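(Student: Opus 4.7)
The plan is to follow the template of Lemma \ref{lemmaBPs4}, substituting the matching property of $\Gamma$ between subsets of size $\frac{1}{3}d$ for the matching property between subsets of size $\alpha n$. Let $P$ be a longest path in $\Gamma$. Since $\Gamma$ is an $\left(\frac{n}{4}, 2\right)$-expander, P{\'o}sa's lemma yields a set $S$ of at least $\frac{n}{4}$ possible starting vertices of paths obtained from $P$ by rotations, and for each $s \in S$, a set $T_s$ of at least $\frac{n}{4}$ possible endpoints of paths $P_{s,t}$ obtained from $P$ by further rotations with $s$ fixed.

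For each pair $s \in S$, $t \in T_s$, I would split into two cases. In the first case, when $|N_G(s) \cap P|, |N_G(t) \cap P| \geq \frac{2}{3}d$, one may pick a vertex $c$ on $P_{s,t}$ such that at least $\frac{1}{3}d$ of $s$'s neighbors precede $c$ and at least $\frac{1}{3}d$ of $t$'s neighbors succeed $c$ along $P_{s,t}$ (possibly after swapping the roles of $s$ and $t$). Defining $L = \{p_{P_{s,t}}(x) : x \in N_G(s) \cap P,\ x \text{ precedes } c\}$ and $R = \{s_{P_{s,t}}(x') : x' \in N_G(t) \cap P,\ x' \text{ succeeds } c\}$ yields disjoint subsets of size at least $\frac{1}{3}d$; by the matching assumption on $\Gamma$ there is a matching of size $\frac{1}{6}d$ between them in $\Gamma$, and each matching edge $(l, r)$ contributes the booster pair $\{(s, s_{P_{s,t}}(l)),\ (p_{P_{s,t}}(r), t)\}$ to $B$. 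In the second case, when, say, $|N_G(s) \cap P| < \frac{2}{3}d$, the vertex $s$ has at least $\frac{1}{3}d$ neighbors outside $V(P)$ in $G$; any such edge extends the longest path on its own, so pairing $\frac{1}{6}d$ of them one-to-one with arbitrary distinct edges incident to $t$ gives $\frac{1}{6}d$ booster pairs.

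For the counting, $\sum_{s \in S} |T_s| \geq \frac{n^2}{16}$, each unordered pair of ordered vertex pairs is examined at most $8$ times (two choices for which endpoint plays the role of $s$, and two choices of which edge in the pair contains $s$), and each produces at least $\frac{d}{6}$ booster pairs, for a total of at least $\frac{n^2 d}{768}$. Each edge of $G \setminus F$ appears in at most $\frac{n}{2}$ pairs (at most $|T_s| \leq \frac{n}{4}$ for each of its two endpoints playing the role of $s$). Discarding pairs containing an edge of $E(\Gamma) \cup E(F)$ removes at most $(|E(\Gamma)| + |E(F)|) \cdot \frac{n}{2} = O(n^2 \log n)$ pairs; since $d \geq C n \log\log n / \log n \gg \log n$, this loss is $o(n^2 d)$ and the surviving $B$ satisfies $|B| \geq \frac{n^2 d}{800}$.

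The main obstacle to watch for is checking that the case 1 construction truly yields a BP with respect to $\Gamma$. One must verify that adding the two new edges to $\Gamma$, in conjunction with the matching edge $(l, r) \in \Gamma$ and segments of $P_{s,t}$, creates a Hamilton cycle on $V(P_{s,t})$: traverse $s \to s_{P_{s,t}}(l)$ via the new edge, along $P_{s,t}$ from $s_{P_{s,t}}(l)$ up to $p_{P_{s,t}}(r)$, across the other new edge to $t$, backwards along $P_{s,t}$ to $r$, across the matching edge to $l$, and finally backwards along $P_{s,t}$ back to $s$. If $P_{s,t}$ is Hamiltonian in $\Gamma$, the resulting cycle is a Hamilton cycle in $\Gamma$ plus the BP; otherwise, since $\Gamma$ is connected, breaking the cycle with an edge of $\Gamma$ leading outside $V(P_{s,t})$ produces a strictly longer path, which is what is required of a BP.
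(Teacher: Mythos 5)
Your proof follows the same route as the paper's, which simply declares this lemma ``essentially identical'' to Lemma \ref{lemmaBPs4} and leaves the adaptation to the reader: P{\'o}sa rotations give $S$ and the sets $T_s$, the two cases split according to whether $s$ and $t$ have many $G$--neighbours on $P$, and in the dense case the $\frac{1}{6}d$--matching between the predecessor set $L$ and the successor set $R$ supplies the booster pairs. Your explicit check that the two added edges, together with the matching edge and segments of $P_{s,t}$, close a cycle on $V(P_{s,t})$ (and hence, by connectivity of $\Gamma$, yield a Hamilton cycle or a longer path) is exactly the intended, if unstated, justification. The one slip is in the multiplicity bound: an edge $(u,v)$ can occur in a pair not only with $u$ or $v$ playing the role of $s$ (at most $|T_s|=\frac{n}{4}$ pairs for each), but also with $u$ or $v$ playing the role of $t$ (at most $|S|=\frac{n}{4}$ pairs for each), so with rotation sets of size $\frac{n}{4}$ this construction only guarantees at most $n$ pairs per edge, not $\frac{n}{2}$; shrinking $S$ and $T_s$ to recover $\frac{n}{2}$ would in turn cost you the bound $|B|\geq \frac{1}{800}n^2d$. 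This factor-of-two tension is present in the paper itself (Lemma \ref{lemmaBPs4} proves the bound $n$ by the same count) and is immaterial to the subsequent union-bound argument, but as written your justification of the second bullet omits the $t$--role and is therefore incomplete.
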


\begin{proof}
The proof is essentially identical to the proof of Lemma \ref{lemmaBPs4}, up to some minor changes.
\end{proof}

As in the previous proofs, for $F,\Gamma \subseteq G$ we construct the auxiliary graph $X = (V,E)$, with vertex set $V= E(G)$ and edge set $E = \lbrace (e,f) \in \binom{V}{2}:\{e,f\} \in B \rbrace$, with $B$ being a set of booster pairs whose existence is guaranteed by Lemma \ref{lemmaBPs6}.

\begin{lemma}
Let $p_1 \leq p \leq p_2$. The probability that $X^p$ contains no edge is at most $2 \exp \left( -\frac{1}{8100}ndp \right)$.
\end{lemma}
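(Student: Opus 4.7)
The plan is to run the proof essentially in parallel to that of Lemma \ref{lemmaSpans}, replacing the relevant scales by what is natural in the $(n,d,\lambda)$ setting. Recall that the auxiliary graph $X$ has $|V(X)| = |E(G)| = nd/2$, $\Delta(X) \leq n/2$, and $|E(X)| \geq n^2 d / 800$. So the average degree in $X$ is of order $n$, which suggests calling a vertex $x \in V(X) \setminus (S \cup N_X(S))$ \emph{$S$-useful} if $|N_X(x) \setminus N_X(S)| \geq n/4000$, and stopping the greedy sampling once $|S| \geq nd/4000$, $|N_X(S)| \geq nd/2000$, or no unsampled $S$-useful vertices remain.

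The first step is to prove an analogue of Lemma \ref{lemmaUse}: whenever $|S| \leq nd/4000$ and $|N_X(S)| \leq nd/2000$, the set $A$ of $S$-useful vertices satisfies $|A| \geq nd/4000$. This follows from the same double-counting argument: edges with an endpoint in $S \cup N_X(S)$ number at most $\Delta(X)\cdot(|S|+|N_X(S)|) \leq (n/2)\cdot(3nd/4000) = 3n^2d/8000$, so edges entirely in $V(X)\setminus(S\cup N_X(S))$ number at least $7n^2d/8000$, while they are also bounded above by $\Delta(X)\cdot|A| + (n/4000)\cdot|V(X)| = (n/2)|A| + n^2d/8000$, which yields the desired lower bound on $|A|$.

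With this in place, sample the vertices of $V(X)$ adaptively as in Lemma \ref{lemmaSpans}: at each step pick some unsampled $S$-useful vertex, keep it with probability $p$, and add it to $S$ upon success. Since each successful $S$-useful vertex contributes at least $n/4000$ new vertices to $N_X(S)$, reaching $|N_X(S)| \geq nd/2000$ requires at most $2d$ successes. Thus sampling fails only if $\mathrm{Bin}(nd/4000,p) < 2d$. Here one must check that the multiplicative deviation is not too small: because $p \geq p_1 = \log n/d$ and $d \leq n$, we have $np \geq \log n$, and in fact under the assumption $d \geq Cn\log\log n/\log n$ we get $np = \Omega(\log^2 n/\log\log n) = \omega(1)$, so $2d = o(ndp/4000)$ and the usual Chernoff bound yields $\Pr[\text{sampling fails}] \leq \exp(-ndp/8100)$ for $n$ large enough.

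Conditioned on successful sampling, at least $nd/2000 - nd/4000 = nd/4000$ vertices of $N_X(S)$ remain unsampled, and $X^p$ misses all of them with probability at most $(1-p)^{nd/4000} \leq \exp(-ndp/4000) \leq \exp(-ndp/8100)$. Combining the two contributions by the union bound gives the stated bound $2\exp(-ndp/8100)$. The only nontrivial point — and the one that really uses the hypothesis on $d$ rather than mimicking the super-Dirac case verbatim — is the verification that $np$ is large enough for the Chernoff estimate above to deliver a constant very close to $1$ in the exponent; everything else is a direct transcription of Lemma \ref{lemmaSpans}.
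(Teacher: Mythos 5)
Your proof is correct and is exactly the route the paper takes: the paper simply declares this lemma's proof ``essentially identical'' to that of Lemma \ref{lemmaSpans}, and your transcription --- with $nd$ replacing $n^2$ as the scale for $|S|$ and $|N_X(S)|$, $n/4000$ as the usefulness threshold, at most $2d$ successes needed, and the two Chernoff/first-moment estimates combined by a union bound --- is precisely the intended argument. One harmless slip: your parenthetical claim that $d\geq Cn\log\log n/\log n$ yields $np=\Omega(\log ^2 n/\log\log n)$ has the inequality backwards (that hypothesis gives an \emph{upper} bound $np_1\leq \log ^2 n/(C\log\log n)$), but this does not matter since the bound $np\geq\log n$, which you correctly derive from $p\geq \log n/d$ and $d\leq n$ alone, already makes the relative deviation in the Chernoff step tend to $1$ and delivers the $1/8100$ constant.
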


The proof of this lemma is essentially identical to the proof in Lemma \ref{lemma2}, as is the proof of the following corollary, and from the corollary the derivation of Lemma \ref{lemma6}:

\begin{corol}
Let $p_1 \leq p \leq p_2$. Then the following holds with probability $1-n^{-\omega (1)}$:\\
For every subgraph $\Gamma$ of $G_p$ with at most $3d_0n$ edges, such that $\Gamma$ is an $\left( \frac{n}{4} ,2\right)$--expander and such that for every pair of disjoint subsets $U,W\subseteq V(G)$ with $|U|,|W| = \frac{1}{3}d$: $\nu _{\Gamma}(U,W) \geq \frac{1}{6}d$, and for every subgraph $F\subseteq G_p$ such that $\Delta (F) \leq 2k-2$, $G_p\setminus F$ contains a booster pair with respect to $\Gamma$.
\end{corol}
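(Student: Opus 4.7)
The plan is to follow exactly the template used for Corollary 2 in Section \ref{sec-superdirac}: invoke the preceding lemma (which for any fixed pair $(F,\Gamma)$ of subgraphs of $G$ satisfying the stated structural hypotheses gives that the probability $G_p\setminus F$ contains no booster pair with respect to $\Gamma$ is at most $2\exp(-ndp/8100)$), and then take a union bound over all such pairs, weighted by the probability that both $F$ and $\Gamma$ are contained in $G_p$.

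More concretely, I would fix a pair $(F,\Gamma)\subseteq G$ with $|E(F)|=j\leq (k-1)n$ (coming from $\Delta(F)\leq 2k-2$) and $|E(\Gamma)|=i\leq 3d_0 n$ satisfying the structural hypotheses of the corollary. The event $\{F,\Gamma\subseteq G_p\}$ is monotone increasing in $G_p$, while the event that $G_p\setminus F$ contains no booster pair with respect to $\Gamma$ is monotone decreasing; by Harris's (FKG) inequality the two are negatively correlated, so their joint probability is at most $p^{i+j}\cdot 2\exp(-ndp/8100)$. Union-bounding over all pairs, using $|E(G)|=nd/2$ and the estimate $\binom{nd/2}{\ell}p^\ell\leq(endp/(2\ell))^\ell$, the total failure probability is at most
\[
\sum_{i=1}^{3d_0 n}\sum_{j=0}^{(k-1)n}\left(\frac{endp}{2i}\right)^i\left(\frac{endp}{2j}\right)^j\cdot 2\exp\left(-\frac{ndp}{8100}\right).
\]

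Since $p\leq p_2=1.1\log n/d$, on the range $i\leq 3d_0n$ with $d_0=10^{-6}\log n$ the base $endp/(2i)$ is bounded by an absolute constant (of order $5\cdot 10^5$), so the $i$-factor contributes at most $\exp(O(d_0 n\log n))=\exp(O(10^{-5}n\log n))$; the $j$-factor contributes $\exp(O(n\log\log n))$. Since $ndp\geq n\log n$, these contributions are dominated by $\exp(-ndp/8100)\leq\exp(-n\log n/8100)$, so the whole expression is $n^{-\omega(1)}$, as required.

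The main point of care in this calculation is the numerical check that the chosen threshold $d_0=10^{-6}\log n$ is small enough for the union-bound over sparse expanding subgraphs $\Gamma$ to be absorbed into the exponential decay produced by the booster-pair lemma: the coefficient in the exponent arising from counting such $\Gamma$'s is of order $10^{-5}$, while the decay coefficient in the lemma is $\approx 1.23\cdot 10^{-4}$, leaving a comfortable margin. Once this corollary is in hand, Lemma \ref{lemma6} is deduced from it via Lemma \ref{lem-asym} exactly as the analogous deduction is carried out at the end of Section \ref{sec-superdirac}, namely by summing over $m_1\leq m\leq m_2$ the $G_m$-probability of the bad event, which is at most $3\sqrt{|E(G)|}\cdot n^{-\omega(1)}=o(1)$.
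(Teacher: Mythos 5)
Your proposal is correct and takes essentially the same approach as the paper, which proves this corollary by direct reference to the template of the corresponding corollary in Section 3: a union bound over all pairs $(F,\Gamma)$, with the joint probability factored as $p^{i+j}$ times the $2\exp\left(-\frac{1}{8100}ndp\right)$ bound from the preceding lemma, followed by the same numerical check that the entropy term $\exp\left(O(d_0 n\log n)\right)$ is dominated by the exponential decay since $ndp\geq n\log n$. Your explicit appeal to Harris/FKG to justify the factorization is a harmless refinement of the paper's implicit independence argument (the booster pairs are constructed to avoid $E(F)\cup E(\Gamma)$).
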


\section{Concluding remarks} \label{sec-remarks}
It is worth noting that the proofs of Theorem \ref{thmDirac}, Theorem \ref{thmJumbled} and Theorem \ref{thmNDL} all work if we allow the required number of edge disjoint cycles $k$ to grow mildly with $n$, with no changes to the proofs needed. More specifically, for Theorem \ref{thmDirac} and Theorem \ref{thmJumbled} it is sufficient to assume that $k=k(n) =o(\log n)$, and in Theorem \ref{thmNDL} the assumption $k=k(n) =o(\log \log n)$ suffices.

Johansson \cite{RSATALK} provided the following example of a Dirac graph on $n$ vertices: $V(G) = A \cup B$ s.t. $|A|=|B| = \frac{n}{2}$ and $E(G) = \binom{B}{2} \cup (A \times B)$. This is a Dirac graph for which $G_{\tau _2}$ is not Hamiltonian with probability bounded away from 0, thus showing that the assumption that $G$ is a Dirac graph is not sufficient for a hitting time result. In our statement of Theorem \ref{thmDirac} we assume that the base graph $G$ satisfies $\delta (G) - \frac{1}{2}n = \Omega (n)$, and this assumption is necessary for our proof of the theorem. We leave it as an open question whether a similar hitting time result can be proven under a milder restriction on the difference $\delta (G) - \frac{1}{2}n$.

Finally, our result in Theorem \ref{thmNDL}, along with the result by Frieze and Krivelevich in \cite{FK2}, provide hitting time statements for $(n,d,\lambda )$--graphs with $d = \omega \left( (n\log n)^{3/4} \right)$. We leave it as an open question whether this range can be extended, and whether some of the restrictions on $\lambda$ can be eased in this range.

\end{document}